\newcommand{\gap}{\vspace{0.1in}}
\newcommand{\wt}{\widetilde}
\newcommand{\wh}{\widehat}
\newcommand{\ol}{\overline}
\newcommand{\Ical}{\mathcal I}
\newcommand{\Jcal}{\mathcal J}
\newcommand{\Scal}{\mathcal S}
\newcommand{\mycut}[1]{{}}
\newtheorem{theorem}{Theorem}[section] 
\newtheorem{lemma}{Lemma}[section] 
\newtheorem{corollary}{Corollary}[section] 
\newtheorem{proposition}{Proposition}[section] 
\newtheorem{remark}{Remark}[section]
\newtheorem{example}{Example}[section] 
\begin{document}

\title{Solution Uniqueness of Convex Piecewise Affine Functions Based Optimization with Applications to Constrained $\ell_1$ Minimization}
%
%


\author{Seyedahmad Mousavi \ \ \ and \ \ \   Jinglai Shen\footnote{Department of Mathematics and Statistics, University of Maryland Baltimore County, Baltimore, MD 21250, U.S.A. Emails:  smousav1@umbc.edu and shenj@umbc.edu.} }

\maketitle

\begin{abstract}
In this paper, we study the solution uniqueness of an individual feasible vector of a class of convex optimization problems involving convex piecewise affine functions and subject to general polyhedral constraints. This class of problems incorporates many important polyhedral constrained $\ell_1$ recovery problems arising from sparse optimization, such as basis pursuit, LASSO, and basis pursuit denoising, as well as polyhedral gauge recovery.
 By leveraging the max-formulation of convex piecewise affine functions and convex analysis tools, we develop dual variables based necessary and sufficient uniqueness conditions via simple and yet unifying approaches; these conditions are applied to a wide range of $\ell_1$ minimization problems under possible polyhedral constraints. An effective linear program based scheme is proposed to verify solution uniqueness conditions. The results obtained in this paper not only recover the known solution uniqueness conditions in the literature by removing restrictive assumptions
 but also yield new uniqueness conditions for much broader constrained $\ell_1$-minimization problems.
\end{abstract}
%

%
\section{Introduction}

The $\ell_1$-norm  minimization, or simply $\ell_1$ minimization, is a convex relaxation of $\ell_0$-(pseudo)norm based sparse optimization, and it has received surging interest in diverse areas, such as compressed sensing, signal and image processing, machine learning, and high dimensional statistics and data analytics. Unlike the $\ell_p$-norm with $p>1$, the $\ell_1$-norm is {\em not} strictly convex \cite{ShenMousavi_manuscript17}, and this yields many interesting issues in solution uniqueness which are critical to  algorithm development and analysis. In addition to various important sufficient conditions for global and uniform solution uniqueness (or the so-called uniform recovery conditions) \cite{FoucartRauhut_book2013, Fuchs_ITT04, RTibshirani_EJS13}, necessary and sufficient conditions for solution uniqueness of an (arbitrary) individual vector are also established, e.g., \cite[Section 4.4]{FoucartRauhut_book2013} and \cite{Gilbert_JOTA17, ZhangYC_JOTA15, Zhang_Yan_Yun_ACM16, Zhao_JORSC14}, which are closely related to non-uniform recovery conditions in the sparse signal recovery literature \cite{CandesPlan_TIT11, FoucartRauhut_book2013, Zhang_Yan_Yun_ACM16}.


It is worth mentioning that there are many different, nonetheless equivalent, solution uniqueness conditions for an individual vector. We are particularly interested in those conditions expressed in terms of dual variables or the so-called dual certificate conditions \cite{Fuchs_ITT04}. This is because dual variables usually have a smaller size in sparse optimization. For example, the size of dual variables associated with a measurement matrix is the number of rows of this matrix, which is much smaller than the size of primal variables, i.e., the number of columns of such a matrix. Therefore,  solution uniqueness conditions in dual variables are numerically favorable. From an optimization point of view, such conditions are nontrivial and often require convex analysis tools to develop them.  Moreover, it is desired that uniqueness conditions are explicitly dependent on problem parameters,
%
%
e.g., the measurement matrix and the measurement vector. Recent solution uniqueness results of this kind include \cite{Gilbert_JOTA17, ZhangYC_JOTA15, Zhang_Yan_Yun_ACM16, Zhao_JORSC14}. In particular, the papers \cite{ZhangYC_JOTA15, Zhang_Yan_Yun_ACM16} develop solution uniqueness conditions for several important $\ell_1$ minimization problems and their variations, e.g., basis pursuit (BP), the least absolute shrinkage and selection operator (LASSO), and basis pursuit denoising (BPDN). The recent paper \cite{Gilbert_JOTA17} gives another proof of the  uniqueness conditions of basis pursuit established in \cite{ZhangYC_JOTA15} and clarifies geometric meanings of these conditions with extensions to polyhedral gauge recovery.  Motivated by constrained sparse signal recovery \cite{FoucartKoslicki_ISPL14, IDP_ISP17, WangXTang_TSP11}, solution uniqueness of basis pursuit under the nonnegative constraint is studied in \cite{Zhao_JORSC14}. However, solution uniqueness of $\ell_1$ minimization under general polyhedral constraints has been not fully addressed, despite various polyhedral constraints in applications, e.g., the monotone cone constraint in order statistics, and the polyhedral constraint in the Dantzig selector \cite{CandesTao_AoS07} (cf. Section~\ref{subsect:PA_loss_unique}).
%
%

Inspired by the lack of solution uniqueness conditions under general polyhedral constraints and the fact that the $\ell_1$-norm is a special convex piecewise affine (PA) function, we study a broad class of convex optimization problems involving convex PA functions and subject to general linear inequality constraints, and we develop necessary and sufficient solution uniqueness conditions for an individual feasible vector.
%
%
This general framework incorporates many important $\ell_1$ minimization problems under possible inequality constraints, such as BP, LASSO, BPDN, and polyhedral gauge recovery. Different from the techniques developed in a similar framework in \cite{Gilbert_JOTA17}, we exploit the max-formulation of a convex PA function (cf. Section~\ref{sect:convex_PA_func}). The max-formulation leads to much simpler, yet unifying and systematic, approaches to establish solution uniqueness conditions for a wide range of problems; see Remark~\ref{remark:comparsion_Gilbert} for comparison. These approaches not only recover all the known solution uniqueness conditions in the literature by removing restrictive assumptions but also
shed light on new solution uniqueness conditions of much broader constrained $\ell_1$ minimization problems, e.g., the basis pursuit and sparse fused LASSO under general linear inequality constraints, and the Dantzig selector; see Section~\ref{sect:application_example_comparison} for examples and details.

The rest of the paper is organized as follows. In Section~\ref{sect:convex_PA_func}, we introduce convex PA functions and discuss their properties. Section~\ref{sect:general_results} develops solution uniqueness conditions for four convex optimization problems involving convex PA functions and subject to general linear inequality constrains, i.e., basis pursuit-like problem, LASSO-like problem, and two basis pursuit denoising-like problems. By applying these results,  Section~\ref{sect:Applications} addresses solution existence and uniqueness of general $\ell_1$ minimization problems. In Section~\ref{sect:application_example_comparison}, concrete uniqueness conditions are established for  $\ell_1$ minimization and compared with related results in the literature. Section~\ref{sect:numerical_verification} provides a simple and effective linear program based scheme for verifying solution uniqueness conditions.
Finally, conclusions are made in Section~\ref{sect:conclusion}.

{\it Notation}.
%
%
%
Let $A$ be an $m\times N$ real matrix.
For any index set $\Scal \subseteq \{1, \ldots, N\}$, let $|\Scal|$ denote the cardinality of $\Scal$, $\mathcal S^c$ denote the complement of $\Scal$, and $A_{\bullet\Scal}$ be the matrix formed by the columns of $A$ indexed by elements of $\Scal$. Similarly, for an index set $\alpha \subseteq \{1, \ldots, m\}$, $A_{\alpha\bullet}$ is the matrix formed by the rows of $A$ indexed by elements of $\alpha$.
For a given matrix $A$, $R(A)$ and $N(A)$ denote the range and null space of $A$, respectively.  Denote by $\mathcal N_{\mathcal C}(x)$ the normal cone of a closed convex set $\mathcal C$ at $x \in \mathcal C$, and by $\mbox{int}$ and $\mbox{ri}$  the interior and the relative interior of a set, respectively. Besides, denote by $\mathbf 1$ the column vectors of ones. In addition, $\mathbb R^N_+$ and $\mathbb R^N_{++}$ denote the nonnegative and positive orthants of $\mathbb R^N$, respectively. For a vector $z=(z_1, \ldots, z_n)^T$ whose each $z_i \ne 0$, we define $\mbox{sgn}(z):=( z_1/|z_1|, \ldots, z_n/|z_n|)^T$.
%
%
%

%
%

%
%

%
%
%
%

%
\section{Preliminary: Convex Piecewise Affine Functions} \label{sect:convex_PA_func}


A real-valued continuous  function $g:\mathbb R^N \rightarrow \mathbb R$ is piecewise affine (PA) if there exists a finite family of real-valued affine
functions $\{g_i\}^\ell_{i=1}$ such that $g(x) \in \{g_i(x)\}^\ell_{i=1}$ for each $x\in \mathbb R^N$ \cite{Scholtes_thesis94}. A special class of continuous PA functions is  continuous piecewise linear (PL) functions, for which each $g_i$ is a linear function.
A continuous PA function is globally Lipschitz, and we call it a {\em Lipschitz} PA function without loss of generality; see \cite{Shen_SIOPT14, SHPang_switching10, ShenWang_SICON11} for more geometric properties of these functions. A real-valued Lipischitz PA function can be described by the min-max formulation \cite{Scholtes_thesis94}. Furthermore, a convex (Lipischitz) PA function $g:\mathbb R^N \rightarrow \mathbb R$ (whose effective domain is $\mathbb R^N$) attains the max-formulation \cite[Section 19]{Rockafellar_book70} or \cite[Proposition 2.3.5]{Bertsekas_book09}. Specifically, there exists a finite family of $(p_i, \gamma_i) \in \mathbb R^N \times \mathbb R, i=1, \ldots, \ell$ such that
\begin{equation} \label{eqn:PA_max}
   g(x) \, = \, \max_{i=1, \ldots, \ell} \, \Big( \, p^T_i x + \gamma_i \, \Big).
\end{equation}
%
%
%
Similarly, a convex PL function attains the above max-formulation with all $\gamma_i=0$.
%
%
For a given $x \in \mathbb R^N$, define the index set $\mathcal I:=\{ i \in \{1, \ldots, \ell\} \, | \, p^T_i x + \gamma_i= g(x)\}$. Letting conv denote the convex hull of a set, the subdifferential of $g(x)$ at this $x$ is then given by \cite[Proposition B.25]{Bertsekas_book99}:
\begin{equation} \label{eqn:subdiff_g}
 \partial g(x) = \mbox{conv}\left( \bigcup_{i\in \mathcal I} \partial (p^T_i x + \gamma_i ) \right) = \mbox{conv}\Big(\{ p_i \, | \, i \in \Ical \} \Big).
\end{equation}
The normal cone of $\{ x \, | \, g(x) \le 0 \}$ at $x^*$ is $\mbox{cone}(\partial g(x^*))$ \cite{Ruszczynski_book06}, where cone denotes the conic hull of a set.
%
%
%
The following lemma presents additional properties of convex PA functions.

\begin{lemma} \label{lem:convex_PA_func}
 The following hold:
 \begin{itemize}
   \item [(i)] The (real-valued) function $g:\mathbb R^N \rightarrow \mathbb R$ is a convex PA function if and only if its epigraph is a convex polyhedron in $\mathbb R^N \times \mathbb R$;
   \item [(ii)] Let $f:\mathbb R^m \rightarrow \mathbb R$ be a convex PA function, and $h:\mathbb R^N \rightarrow \mathbb R^m$ be an affine function. Then $f\circ h$ is a convex PA function on  $\mathbb R^N$;
    %
   \item [(iii)] Let $\{g_1, \ldots, g_r\}$ be a finite family of convex PA functions on $\mathbb R^N$. Then $\sum^r_{i=1} \lambda_i \cdot g_i(x)$ with $\lambda_i \ge 0$ is a convex PA function.
  %
 \end{itemize}
\end{lemma}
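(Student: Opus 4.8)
The plan is to prove each of the three statements using the characterizations of convex PA functions already available, namely the max-formulation \eqref{eqn:PA_max} and the polyhedrality of the epigraph.

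For part (i), I would argue both directions via the epigraph. If $g$ is a convex PA function, then by the max-formulation \eqref{eqn:PA_max} we have $g(x) = \max_i (p_i^T x + \gamma_i)$, so the epigraph $\{(x,t) : t \ge g(x)\}$ equals $\bigcap_{i=1}^\ell \{(x,t) : t \ge p_i^T x + \gamma_i\}$, a finite intersection of closed half-spaces in $\mathbb R^N \times \mathbb R$, hence a convex polyhedron. Conversely, if the epigraph $E$ is a convex polyhedron, then $g$ is convex (since its epigraph is convex) and real-valued with effective domain $\mathbb R^N$; writing $E$ as a finite intersection of half-spaces and noting that $E$ contains a vertical ray direction (if $(x,t)\in E$ then $(x,t+s)\in E$ for all $s\ge 0$), I would deduce that each defining inequality can be taken of the form $t \ge p_i^T x + \gamma_i$ or is of the form $a_j^T x \le b_j$ not involving $t$; but since the domain is all of $\mathbb R^N$ there are no such latter constraints, so $E = \bigcap_i \{t \ge p_i^T x + \gamma_i\}$, giving $g(x) = \max_i(p_i^T x + \gamma_i)$, which is PA (and convex). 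The one subtlety to handle carefully is confirming that no "vertical" facet inequalities (those not involving $t$) survive given that $\text{dom}\, g = \mathbb R^N$; this is the main obstacle in part (i), and it is resolved by the recession-cone/unboundedness-in-$t$ argument.

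For part (ii), given $f(y) = \max_{i=1,\dots,\ell}(p_i^T y + \gamma_i)$ and $h(x) = Mx + q$ affine, I would simply compose: $(f\circ h)(x) = \max_i\big(p_i^T(Mx+q) + \gamma_i\big) = \max_i\big((M^T p_i)^T x + (p_i^T q + \gamma_i)\big)$, which is again of the form \eqref{eqn:PA_max} with data $(M^T p_i,\, p_i^T q + \gamma_i)$. Hence $f\circ h$ is a convex PA function on $\mathbb R^N$; this step is essentially a one-line computation. For part (iii), given $g_k(x) = \max_{i \in J_k}(p_{k,i}^T x + \gamma_{k,i})$ and $\lambda_k \ge 0$, each $\lambda_k g_k$ is again a convex PA function (scale the affine pieces, using part (ii) or directly), so it suffices to show that a finite sum of convex PA functions is convex PA. Convexity of the sum is immediate; for the PA property I would invoke part (i): the epigraph of $\sum_k \lambda_k g_k$ is (the image under the linear "addition of last coordinates" map, or more cleanly via the inf-convolution / direct argument) expressible through the polyhedral epigraphs of the summands. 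A clean route is: $g := \sum_k g_k$ is continuous and convex, and for each $k$ there is a partition of $\mathbb R^N$ into finitely many polyhedra on which $g_k$ is affine; taking the common refinement (a finite polyhedral subdivision of $\mathbb R^N$) yields finitely many polyhedra on each of which every $g_k$, hence $g$, is affine, so $g$ is PA; combined with convexity this completes (iii). The main obstacle here is being careful that the common refinement of finitely many finite polyhedral subdivisions is again a finite polyhedral subdivision of $\mathbb R^N$ on whose cells $g$ is simultaneously affine — a standard but slightly technical fact about polyhedral complexes.

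Overall the real content lies in part (i); parts (ii) and (iii) then follow either by direct substitution into the max-formulation or by reduction to (i) via epigraph/subdivision arguments. I would present (i) first, then (ii) as an immediate corollary of the max-formulation, and finally (iii) using (ii) and the common-refinement argument.
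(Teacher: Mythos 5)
Your proposal is correct, and for the substantive part (i) it follows essentially the paper's route: the ``only if'' direction writes the epigraph as a finite intersection of half-spaces $\{(x,t)\,:\,t\ge p_i^Tx+\gamma_i\}$ via \eqref{eqn:PA_max}, and the ``if'' direction normalizes the defining inequalities of the polyhedral epigraph using the vertical recession direction and the fact that $\mathrm{dom}\,g=\mathbb R^N$ (the paper defers to the proof of \cite[Proposition 2.3.5]{Bertsekas_book09}, which is exactly this argument; note only that inequalities not involving $t$ need not literally be absent --- they can have $a_j=0$ and then are vacuous and may be discarded, which your recession argument handles). Part (ii) is the intended one-line substitution. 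For part (iii) you take a genuinely different, and heavier, route than necessary: the common-refinement-of-subdivisions argument is valid (the pairwise intersections of cells form a finite polyhedral cover of $\mathbb R^N$ on each member of which every $g_k$, hence the sum, is affine), but the paper's ``trivial'' claim is available directly from the max-formulation: writing $\lambda_k g_k(x)=\max_{i}\big(\lambda_k p_{k,i}^Tx+\lambda_k\gamma_{k,i}\big)$ (this is where $\lambda_k\ge 0$ is used) and using the identity $\sum_k \max_{i_k} a_k(i_k)=\max_{(i_1,\ldots,i_r)}\sum_k a_k(i_k)$, the sum $\sum_k\lambda_k g_k$ is itself a pointwise maximum of finitely many affine functions, i.e.\ already in the form \eqref{eqn:PA_max}, with convexity immediate. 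Your refinement argument buys nothing extra here except robustness (it would also cover nonconvex PA summands), at the cost of the technical bookkeeping you yourself flag; the direct identity avoids it entirely.
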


\begin{proof}
  Statement (i) follows from a similar proof for \cite[Proposition 2.3.5]{Bertsekas_book09} by restricting the effective domain of $g$ to $\mathbb R^N$, and statements (ii)-(iii) are trivial.
%
%
\mycut{
   A similar result is given in \cite[Section 19]{Rockafellar_book70} without proof; see Remark~\ref{remark:PA_function}. We give an elementary proof below for the completeness.

\indent ``Only if'': Let $g$ be a convex PA function, i.e., $g(x)=\max_{i=1, \ldots, \ell} \big( p^T_i x + \gamma_i \big)$, where each $(p_i, \gamma_i)\in \mathbb R^N \times \mathbb R$. Noting that for $(x, \alpha) \in \mathbb R^N\times \mathbb R$, $\alpha \ge g(x)$ if and only if $\alpha \ge p^T_i x + \gamma_i, \forall \, i=1, \ldots, \ell$, we have 
 \[
   \mbox{epi}(g) = \left\{ (x, \alpha) \, \Big | \, \alpha \cdot \mathbf 1 \ge \begin{bmatrix} p^T_1 \\ \vdots \\ p^T_\ell \end{bmatrix} x + \begin{pmatrix} \gamma_1 \\ \vdots \\ \gamma_\ell \end{pmatrix} \right\} \subseteq \mathbb R^N \times \mathbb R.
 \]
 Hence, the epigraph of $g$ is a convex polyhedron in $\mathbb R^N \times \mathbb R$.

``If'': Given a function $g:\mathbb R^N \rightarrow \mathbb R$ whose effective domain is $\mathbb R^N$,
suppose the epigraph of $g$ is a convex polyhedron, i.e.,
\[
  \mbox{epi}(g) = \left\{ (x, \alpha) \, \Big | \, c_j \cdot \alpha \ge a^T_j x + \beta_j, \ j=1, \ldots, p \right\} \subseteq \mathbb R^N \times \mathbb R,
\]
where $c_j, \beta_j \in \mathbb R$ and $a_j \in \mathbb R^N$ for each $j=1, \ldots, p$. Since
%
%
for a fixed $x \in \mathbb R^N$ and all $\alpha$ sufficiently large, we have $(x, \alpha)\in \mbox{epi}(g)$. This shows that each $c_j \ge 0$.  If $c_j=0$ for some $j$, then we must have $a_j=0$ because otherwise, the effective domain of $g$ would be a proper subset of $\mathbb R^N$, contradiction. This implies that $\beta_j\le 0$, leading to a vacuously true inequality independent of $(x, \alpha)$. Therefore, without loss of generality, we assume that each $c_j>0$ or $c_j=1$ after suitable scaling. This shows that $(x, \alpha) \in \mbox{epi}(g)$ if and only if $\alpha \ge h(x):= \max_{j=1, \ldots, p}(a^T_j x + \beta_j)$. Since the latter implies $(x, h(x)) \in  \mbox{epi}(g)$ for each $x$, we have $h(x) \ge g(x)$ for all $x$.
Conversely, since $(x, g(x)) \in \mbox{epi}(g)$ for each $x$, we also have $g(x) \ge h(x)$ for each $x$. Consequently, $g(x)=h(x), \forall \, x $ such that $g$ is a convex PA function.
}
\mycut{
(ii) The proof is trivial and thus omitted.

(iii) We only prove the pointwise maximum is convex and PA. In view of
 $\max\big( \wh g^1(x), \ldots, \wh g^r(x) \big) = \max \big( \max(\wh g^1(x), \wh g^2(x)), \wh g^3, \ldots, \wh g^r(x) \big)$, it suffices to show that $\max(\wh g^1(x), \wh g^2(x))$ is a convex PA function; the rest of the proof follows from an induction argument. Since $\wh g^1=\max( \wh g^1_i )_{i=1, \ldots, p}$ and $\wh g^2 = \max( \wh g^2_j )_{j=1, \ldots, q}$, where all $\wh g^1_i$ and $\wh g^2_j$ are affine functions,
 we have  $ \max(\wh g^1, \wh g^2) = \max( \wh g^1_i, \wh g^2_j )_{i=1, \ldots, p, j=1, \ldots, q}.$ Therefore $\max(\wh g^1, \wh g^2)$ is convex and PA.
}
\end{proof}

%
%

\begin{remark} \label{remark:PA_function} \rm
%
%
A slightly more general class of convex PA functions is considered in \cite[Section 19]{Rockafellar_book70} and \cite{Bertsekas_book09}. Such a function, which is called the {\em polyhedral convex function} coined by R. T. Rockafellar, is defined as an {\em extended} real-valued function whose epigraph is a polyhedron in $\mathbb R^N \times \mathbb R$.
%
%
It can be described by the sum of a real-valued convex PA function and the indicator function of a polyhedron, namely,
\[
   \wh g(x) \, = \, \underbrace{\max_{i=1, \ldots, \ell} \big( h^T_i x + \beta_i \big)}_{:=g(x)} \,+ \, \delta_{\mathcal P}(x),
\]
where $g$ is a real-valued convex PA function, $\mathcal P=\{ x \, | \, C x \ge d \}$ is a polyhedron in $\mathbb R^N$, and $\delta_{\mathcal P}$ is the indicator function of $\mathcal P$, i.e., $\delta_{\mathcal P}(x)=0$ if $x \in \mathcal P$, and $\delta_{\mathcal P}(x)=+\infty$ otherwise.
See \cite{Bertsekas_book09, Gilbert_JOTA17, Rockafellar_book70} for more discussions.
 However, in all the optimization problems to be considered in this paper, the polyhedron $\mathcal P$ corresponding to the indicator function in the function $\wh g$ can be formulated as an additional linear inequality constraint, and thus be removed from $\wh g(x)$.  For example, the optimization problem: $\min \wh g(x)$ subject to $A x = y$ is equivalent to: $\min g(x)$ subject to $x \in \mathcal P$ and $A x = y$.
 For this reason, we consider real-valued convex PA functions, or simply convex PA functions, throughout this paper.
\end{remark}

Convex PA functions represent a broad class of nonsmooth convex functions in numerous applications, and we give several examples as follows. A (real-valued) {\em polyhedral gauge} is a convex PA function satisfying the following conditions: it is nonnegative, positively homogeneous of degree one, and vanishes at the origin \cite{Gilbert_JOTA17, Rockafellar_book70}.
 Since a (real-valued) convex function is continuous on $\mathbb R^N$,   it must vanish at the origin if it is positively homogeneous of degree one, since for some $z \in \mathbb R^N$,  $g(0)=\lim_{\lambda \downarrow 0} g(\lambda \cdot z)=\big(\lim_{\lambda \downarrow 0} \lambda \big) \cdot g(z) = 0$. Hence,  a convex PA function is a polyhedral gauge if
it is nonnegative and positively homogeneous of degree one.
The following lemma shows that a polyhedral gauge is a convex PL function.

\begin{lemma} \label{lem:polyhedral_gauge}
 The function $g:\mathbb R^N \rightarrow \mathbb R$ is a polyhedral gauge if and only if there are finitely many $p_1, \ldots, p_\ell \in \mathbb R^N$ such that $g(x)= \max( \, p^T_1 x, \ldots, p^T_\ell x, 0), \, \forall \, x \in \mathbb R^N$.
\end{lemma}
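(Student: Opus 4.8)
The plan is to prove both implications of the biconditional, using the max-formulation \eqref{eqn:PA_max} as the starting point. For the ``if'' direction, suppose $g(x) = \max(p_1^Tx, \ldots, p_\ell^T x, 0)$. One of the functions in the max is the zero function, so $g \ge 0$ everywhere, i.e., $g$ is nonnegative. Each function $x \mapsto p_i^T x$ and the zero function are positively homogeneous of degree one, and the pointwise maximum of finitely many such functions is again positively homogeneous of degree one: $g(\lambda x) = \max_i(\lambda p_i^T x, 0) = \lambda \max_i(p_i^T x, 0) = \lambda g(x)$ for $\lambda > 0$. Also $g$ is a convex PA function by \eqref{eqn:PA_max} (taking all $\gamma_i = 0$ and appending one more affine piece equal to $0$), hence continuous. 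By the discussion preceding the lemma, nonnegativity plus positive homogeneity of degree one already forces $g$ to be a polyhedral gauge, so this direction is essentially immediate.

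For the ``only if'' direction, suppose $g$ is a polyhedral gauge. Since $g$ is in particular a real-valued convex PA function, the max-formulation \eqref{eqn:PA_max} gives finitely many $(p_i, \gamma_i)$ with $g(x) = \max_{i=1,\ldots,\ell}(p_i^T x + \gamma_i)$. The task is to show one may take every $\gamma_i = 0$ and adjoin the zero affine piece. First I would exploit positive homogeneity to pin down the constants. Fix any index $i$. For all $\lambda > 0$ and all $x$ we have $\lambda g(x) = g(\lambda x) \ge p_i^T(\lambda x) + \gamma_i$, i.e., $g(x) \ge p_i^T x + \gamma_i/\lambda$; letting $\lambda \to \infty$ yields $g(x) \ge p_i^T x$ for every $x$. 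On the other hand, letting $\lambda \downarrow 0$ in $g(x) \ge p_i^T x + \gamma_i/\lambda$ shows that if $\gamma_i > 0$ the right-hand side blows up, a contradiction; hence $\gamma_i \le 0$ for all $i$. Therefore $g(x) = \max_i(p_i^T x + \gamma_i) \le \max_i(p_i^T x)$, but also $g(x) \ge p_j^T x$ for the active index $j$ at $x$, wait — more cleanly: from $g(x) \ge p_i^T x$ for all $i$ we get $g(x) \ge \max_i p_i^T x$, while from $\gamma_i \le 0$ we get $g(x) = \max_i(p_i^T x + \gamma_i) \le \max_i p_i^T x$. Hence $g(x) = \max_i p_i^T x$ for all $x$. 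Finally, since $g \ge 0$ (nonnegativity) and $g(0) = 0$, adjoining the piece $0$ does not change the maximum: $g(x) = \max(p_1^T x, \ldots, p_\ell^T x, 0)$, which is the claimed form. (Conversely one checks the appended $0$ is consistent because $\max_i p_i^T x = g(x) \ge 0$.)

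The main obstacle, such as it is, is handling the constants $\gamma_i$ correctly: one must argue that positive homogeneity forces each $\gamma_i \le 0$ and simultaneously forces $g(x) \ge p_i^T x$, so that the constants can be dropped uniformly rather than piece-by-piece. This is the only place where a genuine (if short) limiting argument is needed; everything else is bookkeeping with the max-formulation and the already-established fact that nonnegativity plus degree-one positive homogeneity characterizes polyhedral gauges among convex PA functions. I would also remark, for completeness, that this lemma makes explicit that a polyhedral gauge is in fact a convex PL function in the sense defined in Section~\ref{sect:convex_PA_func}, since the representation has no nonzero affine constants.
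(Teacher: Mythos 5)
Your proof is correct, and for the ``only if'' direction it takes a genuinely different route from the paper. The paper does not work with an arbitrary max-formulation: it invokes the polyhedral-subdivision representation of a convex PA function (citing \cite[Proposition 4.2.1]{FPang_book03}), so that each affine piece $p_i^T x + \gamma_i$ agrees with $g$ on a polyhedron $\mathcal X_i$ with nonempty interior; it then picks $z \in \mbox{int}\,\mathcal X_i$ and uses $g(\lambda z)=\lambda g(z)$ for $\lambda$ near $1$ to force $\gamma_i = 0$ for \emph{every} $i$, after which nonnegativity lets it append the zero piece. You instead start from any max-formulation and use two global scaling limits: $\lambda\to\infty$ gives $g(x)\ge p_i^T x$ for all $i$ and $x$, and $\lambda\downarrow 0$ (or simply $g(0)=0$) gives $\gamma_i\le 0$, whence $\max_i p_i^T x \le g(x) \le \max_i p_i^T x$ and the constants drop out by a sandwich rather than piece by piece. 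Your argument is more elementary (no subdivision machinery) and is robust to redundant pieces in the representation, for which $\gamma_i$ can genuinely be negative and the paper's conclusion ``$\gamma_i=0$ for each $i$'' would not hold; what the paper's approach buys in exchange is the slightly stronger structural statement that, in the subdivision-based representation, every affine constant is exactly zero. The minor mid-sentence correction (``wait --- more cleanly'') should of course be cleaned up, but the final sandwich argument you settle on is the right one.
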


\begin{proof}
  The ``if'' part is trivial since the convex PA function $g(x)= \max( \, p^T_1 x, \ldots, p^T_\ell x, 0)$ is nonnegative and positively homogeneous of degree one. We show the ``only if'' part as follows. Suppose $g:\mathbb R^N \rightarrow \mathbb R$ is a polyhedral gauge. Since $g$ is a convex PA function, it attains the max-formulation and its domain attains a polyhedral subdivision of $\mathbb R^N$ \cite{Scholtes_thesis94}. Specifically, there are finitely many $(p_i, \gamma_i) \in \mathbb R^N\times \mathbb R$ and polyhedra $\mathcal X_i$, where $i=1, \ldots, \ell$, such that $g(x)=\max_{i=1,\ldots, \ell} (p^T_i x + \gamma_i)$, and for each $i$, $g(x)=p^T_i x + \gamma_i$ for all $x \in \mathcal X_i$ \cite[Proposition 4.2.1]{FPang_book03}. Here  $\Xi:=\{ \mathcal X_i \}_{i=1, \ldots, \ell}$ is a polyhedral subdivision of $\mathbb R^N$, i.e.,  $\cup^\ell_{i=1} \mathcal X_i = \mathbb R^N$,  each  $\mathcal X_i$ has nonempty  interior, and the intersection of any two polyhedra in $\Xi$ is either empty or a common proper face of both polyhedra; see \cite{FPang_book03, Scholtes_thesis94,  Shen_SIOPT14} for more details.
%
%
%
For any fixed $i\in \{1, \ldots, \ell\}$, let $z$ be in the interior of $\mathcal X_i$. Therefore, $g(z)= p^T_i z + \gamma_i$, and for all $\lambda \in \mathbb R$ sufficiently close to 1, we have $\lambda \cdot z \in \mathcal X_i$ so that $g(\lambda \cdot z)=  p^T_i (\lambda \cdot z) + \gamma_i$. Furthermore, since $g$ is positively homogeneous of degree one, $g(\lambda \cdot z) = \lambda \cdot g(z)$ such that $\lambda \cdot p^T_i z + \gamma_i= \lambda \cdot p^T_i z + \lambda \cdot \gamma_i$ for all $\lambda$ sufficiently close to 1. This shows that $\gamma_i=0$ for each $i$. Therefore, $g(x)=\max(p^T_1 x, \ldots, p^T_\ell x)$. Finally, since $g$ is nonnegative, we have $g(x)=\max(g(x), 0)$ for all $x$. This shows that $g(x)=\max(p^T_1 x, \ldots, p^T_\ell x, 0)$ for all $x \in \mathbb R^N$.
\end{proof}

We mention a particular class of polyhedral gauges arising from applications as follows. Such a polyhedral gauge $g(x)= \max( \, p^T_1 x, \ldots, p^T_\ell x, 0)$ with $p_i \ne 0, \forall \, i=1, \ldots, \ell$ satisfies the following condition:  for each nonzero $p_i$, there exists $p_j$ such that $p_j=\beta_{j, i} \cdot p_i$ for some constant $\beta_{j, i}<0$, where $\beta_{j, i}$ depends on $p_i$ and $p_j$. We call such the polyhedral gauge {\em sign-symmetric}. Note that for each $x\in \mathbb R^N$,
\[
  g(x) = \max\Big\{ \max\big\{ \max(p^T_i x, p^T_j x) \ | \, i=1, \ldots, \ell, \ p_j=\beta_{j, i} \cdot p_i, \ \beta_{j, i}<0 \big\},  0 \Big\}.
\]
Since $\max(p^T_i x, p^T_j x)= \max(p^T_i x, \beta_{j, i} p^T_i x) \ge 0$ for any $x$, we have
$g(x) = \max\big\{ \max(p^T_i x, p^T_j x) \ | \, i=1, \ldots, \ell, \ p_i=\beta_{i, j} p_j, \ \beta_{i, j}<0 \big\} = \max( \, p^T_1 x, \ldots, p^T_\ell x)$. In other words, the zero term can be dropped in the max-formulation of a sign-symmetric polyhedral gauge.   Examples of sign-symmetric polyhedral gauges include $\| E x \|_1$ and $\| E x \|_\infty$ for a matrix $E \in \mathbb R^{q \times N}$; see Section~\ref{subsect:properties_L1_function} for the max-formulation of $\| E x \|_1$. Obviously, not every polyhedral gauge is sign-symmetric, e.g., $\max(p^T x, 0)$ for some vector $p \ne 0$.

%
%
%
%

%
%


%
\section{Unique Optimal Solution to A Class of Convex Optimization Problems Involving Convex PA Functions} \label{sect:general_results}

In this section, we develop dual variables based explicit conditions for unique optimal solutions to four convex optimization problems involving convex PA functions, which are motivated by basis pursuit (BP), LASSO, and basis pursuit denoising (BPDN) problems subject to possible polyhedral constraints. For each of these optimization problems, we assume that an optimal solution exists. A detailed study of solution existence requires different techniques and argument other than those for convex PA functions and uniqueness analysis. To avoid being off track from the main theme of the paper, we postpone the discussions of the solution existence issue to Section~\ref{subsect:solution_existence}.

Among the four convex optimization problems treated in this section, three of them are involved with two functions: the first function, denoted by $f$, pertains to the cost due to measurement or approximation errors, while the second function corresponds to sparsity related penalty or objective value, which is usually a convex PA function denoted by $g$. In the literature of statistics and decision theory, the first function is called a loss function. We consider the class of smooth (i.e., $C^1$) and strictly convex loss functions through Sections~\ref{subsect:C_BP_unique}-\ref{subsect:C_BPDN_II_unique}, and study the class of convex PA loss functions in Section~\ref{subsect:PA_loss_unique}. A typical example of loss functions in the first class is the $\ell_2$-loss $f(\cdot)=\| \cdot \|^2_2$, whereas
examples of the second class are the $\ell_1$-loss $\| \cdot \|_1$, the max-loss $\| \cdot \|_\infty$, and the hinge loss.

%
%

Throughout this section, let $g:\mathbb R^N \rightarrow \mathbb R$ be a convex PA function  whose max-formulation is given in (\ref{eqn:PA_max}),  $A \in \mathbb R^{m\times N}$, $y \in \mathbb R^m$,
and $\mathcal P:=\{ x \in \mathbb R^N \, | \, C x \ge d \}$ be a nonempty polyhedron, where $C \in \mathbb R^{p \times N}$ and $d \in \mathbb R^p$.
For a given $x^* \in \mathbb R^N$ satisfying $C x^* \ge d$, define the index sets
\begin{equation} \label{eqn:alpha_Jcal}
 \alpha \, := \, \Big\{ i \in \{1, \ldots, p\} \, | \, (C x^*-d)_i=0 \Big\}, \qquad \mathcal \, \Ical \, := \, \Big\{ j \in \{1, \ldots, \ell\} \, | \, p^T_j x^* + \gamma_j = g(x^*) \Big\},
\end{equation}
and define the following matrix:
%
%
\begin{equation} \label{eqn:b_Q}
    W\,:= \, \begin{bmatrix} p^T_{i_1} \\  \vdots \\ p^T_{i_{|\Ical|}} \end{bmatrix}_{i_k \in \Ical} \in \mathbb R^{|\Ical|\times N},
\end{equation}
where without loss of generality, we assume that  for each $i_k\in \Ical$, $p_{i_k}$ is {\em not} a convex combination of the other $p_{i_j}$'s with $i_j\in \Ical$. In light of (\ref{eqn:subdiff_g}), the columns of $W^T$ are generators of the convex hull that forms the subdifferential $\partial g(x^*)$. Hence, finding the matrix $W$ is equivalent to finding convex hull generators of $\partial g(x^*)$. This observation will be exploited to establish the matrix $W$; see Lemma~\ref{lem:What_properties} and Section~\ref{sect:Applications}.
%

%
%

%
\subsection{Unique Optimal Solution to the Basis Pursuit-like Problem} \label{subsect:C_BP_unique}

%
%
Consider the following convex optimization problem motivated by the basis pursuit (BP) subject to a linear inequality constraint:
\begin{equation} \label{eqn:P_I02}  
%
%
\min_{  x \in \mathbb{R}^N} \
g(x)
\quad \text{subject to}
\quad    A {x}= {y}, \quad   C  x \ge   d.
\end{equation}
We assume that this problem has an optimal solution. For a given feasible point $x^*\in \mathbb R^N$ of (\ref{eqn:P_I02}), i.e., $A x^*=y$ and $C x^* \ge d$, recall the definitions of $\alpha, \Ical$, and $W$ in (\ref{eqn:alpha_Jcal})-(\ref{eqn:b_Q}).

\begin{lemma} \label{lem:dual_lin_constraint}
 Let $A \in \mathbb R^{m\times N}$ and $H \in \mathbb R^{r\times N}$ be given. Then $\{ u \in \mathbb R^N \, | \, A u=0, \ H u \ge 0 \}=\{ 0 \}$ if and only if the following two conditions hold:
 \begin{itemize}
   \item [(i)] $\{ u \in \mathbb R^N \, | \, A u=0, \ H u = 0 \}=\{ 0 \}$; and
   \item [(ii)] There exist $z \in \mathbb R^m$ and $z' \in \mathbb R^r_{++}$ such that $A^T z = H^T z'$.
 \end{itemize}
\end{lemma}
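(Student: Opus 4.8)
The plan is to prove both directions by exploiting the polyhedral cone structure of $\{u : Au=0,\ Hu\ge 0\}$, with the equivalence between conditions (i)--(ii) and triviality of this cone reduced to a separation/duality argument (essentially a variant of Stiemke's lemma or Gordan's theorem of the alternative). For the ``if'' direction I would argue by contradiction: suppose there is a nonzero $u$ with $Au=0$ and $Hu\ge 0$. By (i) we cannot have $Hu=0$, so $Hu\ge 0$ with at least one strictly positive component, say $(Hu)_k>0$. Now take the $z'\in\mathbb R^r_{++}$ and $z\in\mathbb R^m$ from (ii). Then $0 = z^T(Au) = (A^Tz)^Tu = (H^Tz')^Tu = (z')^T(Hu)$. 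But $(z')^T(Hu) = \sum_i z'_i (Hu)_i > 0$ since every $z'_i>0$, every $(Hu)_i\ge 0$, and at least one term is strictly positive---a contradiction. Hence the cone is $\{0\}$.

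**The ``only if'' direction.** This is where the real work lies. Assume $\{u : Au=0,\ Hu\ge 0\}=\{0\}$. Condition (i) is immediate since $\{u : Au=0,\ Hu=0\}\subseteq\{u : Au=0,\ Hu\ge 0\}=\{0\}$. For (ii) I would set up a theorem-of-the-alternative argument. The statement ``there exist $z\in\mathbb R^m$, $z'\in\mathbb R^r_{++}$ with $A^Tz = H^Tz'$'' is equivalent to ``there exists $z'\in\mathbb R^r_{++}$ with $H^Tz' \in R(A^T)$'', i.e., $H^Tz'\perp N(A)$. So I want: there is $z'>0$ (strictly) such that $u^TH^Tz'=0$ for all $u$ with $Au=0$, equivalently $(Hu)^Tz'=0$ for all such $u$. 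The hypothesis says the only $u\in N(A)$ with $Hu\ge 0$ is $u=0$; applying this to $u$ and $-u$ shows the only $u\in N(A)$ with $Hu=0$ is $u=0$, so the linear map $u\mapsto Hu$ restricted to $N(A)$ is injective, and its image $K:=\{Hu : Au=0\}$ is a linear subspace of $\mathbb R^r$ meeting the nonnegative orthant $\mathbb R^r_+$ only at the origin. By the standard separation result (Stiemke's lemma / Gordan's theorem applied to the subspace $K$): a subspace $K\subseteq\mathbb R^r$ satisfies $K\cap\mathbb R^r_+=\{0\}$ if and only if $K^\perp\cap\mathbb R^r_{++}\neq\emptyset$. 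Pick $z'\in K^\perp\cap\mathbb R^r_{++}$. Then $(Hu)^Tz'=0$ for all $u\in N(A)$, i.e., $H^Tz'\in N(A)^\perp = R(A^T)$, so $H^Tz'=A^Tz$ for some $z\in\mathbb R^m$. This gives (ii).

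**Main obstacle.** The crux is the separation lemma ``$K$ subspace, $K\cap\mathbb R^r_+=\{0\}$ $\iff$ $K^\perp\cap\mathbb R^r_{++}\neq\emptyset$,'' which is where strict positivity of $z'$ is genuinely extracted; one must be careful that the relevant cone is closed (automatic here since $K$ is a subspace and $\mathbb R^r_+$ is polyhedral, so $K+\mathbb R^r_+$ is a closed convex cone) before invoking the supporting/separating hyperplane theorem. An alternative, perhaps cleaner for this paper's style, is to invoke a linear-programming duality argument: consider $\max\{\mathbf 1^Ts : s\le Hu,\ Au=0,\ s\ge 0\}$ or a suitably normalized feasibility LP, and read off strict complementarity or use the fact that a feasible bounded LP whose only feasible $s$ is $s=0$ forces the dual to have a strictly positive certificate. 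I would present the subspace separation version as the main line since it is the most self-contained, and remark that it is a form of Stiemke's lemma. A minor point to verify is that $H^Tz'=A^Tz$ genuinely characterizes $H^Tz'\in R(A^T)$; this is just $R(A^T)=N(A)^\perp$, standard linear algebra.
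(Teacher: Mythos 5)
Your proof is correct, but it takes a genuinely different route from the paper. You reduce the ``only if'' direction to a subspace version of Stiemke's lemma: you form the subspace $K=\{Hu \mid Au=0\}$, observe that the hypothesis forces $K\cap\mathbb R^r_+=\{0\}$, extract $z'\in K^\perp\cap\mathbb R^r_{++}$ by a separation argument (your closedness caveat, or equivalently separating $K$ from the standard simplex), and then use $N(A)^\perp=R(A^T)$ to write $H^Tz'=A^Tz$; the ``if'' direction is your direct sign contradiction $0=(A^Tz)^Tu=(z')^T(Hu)>0$. The paper instead introduces the explicit linear program $\max\{\mathbf 1^THu \mid Au=0,\ Hu\ge 0\}$, shows that triviality of the cone is equivalent to condition (i) together with this LP having zero optimal value, and then reads the strictly positive certificate off the dual: strong duality yields $A^Tv^*=H^T(\mathbf 1+w^*)$ with $w^*\ge 0$, so strict positivity comes for free from the $\mathbf 1$ shift, while weak duality gives the converse implication. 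Your version is more geometric and self-contained (the theorem of the alternative does all the work, and the source of strict positivity is transparent), at the cost of needing the separation/closedness step you flag; the paper's version needs only textbook weak and strong LP duality, produces the positive multiplier without any limiting or closedness argument, and dovetails with the paper's later LP-based verification scheme in Section 6 --- indeed your closing remark about an LP-duality alternative is essentially the paper's proof.
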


\begin{proof}
Consider the following linear program:
 \begin{equation} \label{eqn:LP_BP}
   (LP): \quad  \max_{u\in \mathbb R^N} \ \mathbf 1^T H u,  \quad \text{subject to} \quad    A u= 0, \quad   H u \ge  0.
 \end{equation}
We claim that $\{ u \in \mathbb R^N \, | \, A u=0, \ H u \ge 0 \}=\{ 0 \}$ if and only if the following hold:
 \begin{itemize}
  \item [(i')] condition (i) holds, i.e.,   $\{ u \in \mathbb R^N \, | \, A u=0, \ H u = 0 \}=\{ 0 \}$; and
  \item [(ii')] the linear program $(LP)$ attains the zero optimal value (and the unique optimal solution $u^*=0$).
  \end{itemize}
 To show the ``if'' part of this claim, suppose (i') and (ii') hold but there exists $u' \ne 0$ such that $A u'=0$ and $H u' \ge 0$. It follows from (i') that $H u' \ne 0$. Therefore, $\mathbf 1^T H u'>0$, a contradiction to (ii'). Conversely, suppose $\{ u \in \mathbb R^N \, | \, A u=0, \ H u \ge 0 \}=\{ 0 \}$ holds. Clearly, it implies condition (i'). Furthermore, the feasible set of the $(LP)$ is the singleton set $\{ 0\}$ such that (ii') holds. Consequently, the claim holds.

 The dual problem of $(LP)$ is given by:
 \[
    \min_{(v, w) \in \mathbb R^m \times \mathbb R^r} \ 0,  \qquad \text{subject to} \quad    A^T v - H^T w = H^T \mathbf 1, \quad  w \ge 0.
 \]
 In view of the strong duality theorem of linear program, the dual problem attains an optimal solution $(v^*, w^*)$ such that $A^T v^* = H^T (\mathbf 1 + w^*)$ and $w^* \ge 0$. By suitable positive scaling, we deduce that there exist $z$ and $z'>0$ such that $A^T z = H^T z'$, which yields condition (ii). Since condition (ii) is also sufficient for the feasibility, and thus solvability, of the dual problem, it follows from the weak duality of linear program that  $\mathbf 1^T H u \le 0$ for any feasible $u$ of $(LP)$. Since $H u \ge 0$ for any feasible $u$ of $(LP)$, we must have $\mathbf 1^T H u = 0$, which leads to condition (ii'). Therefore, conditions (ii) and (ii') are equivalent. In view of the claim proven above, the lemma holds.
\end{proof}

%
%

%

\begin{theorem} \label{thm:unique_optimal_P_I02}
 Let $x^* \in \mathbb R^N$ be a feasible point of the optimization problem (\ref{eqn:P_I02}).
 Then $x^*$ is the unique minimizer of the problem (\ref{eqn:P_I02}) if and only if
 the following two conditions hold:
  \begin{itemize}
    \item [(i)] $\{ v \in \mathbb R^{N} \, | \, A v =0, \ C_{\alpha \bullet} v = 0, \ W  v = 0 \} = \{ 0 \}$; and
    \item [(ii)] There exist $z \in \mathbb R^m$, $z' \in \mathbb R^{|\alpha|}_{++}$, and $z'' \in \mathbb R^{|\Ical|}_{++}$ such that
     $
         A^T z - C^T_{\alpha\bullet} z' + W^T z'' \, = \, 0.
     $
  \end{itemize}
 Moreover, condition (ii) is equivalent to the following condition:
 \begin{itemize}
    \item [(iii)] There exist $w \in \mathbb R^m$, $w' \in \mathbb R^{|\alpha|}_{++}$, and $w'' \in \mathbb R^{|\Ical|}$ with $0<w''<\mathbf 1$ and $\mathbf 1^T w''=1$ such that
     $
         A^T w - C^T_{\alpha\bullet} w' + W^T w'' \, = \, 0.
     $
  \end{itemize}
%
%
\end{theorem}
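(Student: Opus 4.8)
The plan is to translate uniqueness of $x^*$ into a Farkas-type feasibility statement about the tangent cone of the feasible set, and then to read off conditions (i) and (ii) directly from Lemma~\ref{lem:dual_lin_constraint}. Write $\mathcal F := \{ x \in \mathbb R^N \mid A x = y,\ C x \ge d \}$ for the feasible set and $T := \{ v \in \mathbb R^N \mid A v = 0,\ C_{\alpha\bullet} v \ge 0 \}$ for its tangent cone at $x^*$; recall that for a polyhedron, $x^* + t v \in \mathcal F$ for all sufficiently small $t \ge 0$ whenever $v \in T$. The first and main step is to show that $x^*$ is the unique minimizer of (\ref{eqn:P_I02}) \emph{if and only if} $g'(x^*; v) > 0$ for every nonzero $v \in T$. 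The ``if'' direction uses only convexity of $g$: for feasible $x \ne x^*$ the direction $v := x - x^*$ lies in $T$ (indeed $A v = 0$, and $C_{\alpha\bullet} v \ge 0$ because $C_{\alpha\bullet} x^* = d_\alpha$ and $C_{\alpha\bullet} x \ge d_\alpha$), and since $t \mapsto \big(g(x^* + tv) - g(x^*)\big)/t$ is nondecreasing with limit $g'(x^*;v)$ as $t \downarrow 0$, evaluating at $t = 1$ gives $g(x) \ge g(x^*) + g'(x^*;v) > g(x^*)$.

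The ``only if'' direction is where the piecewise-affine structure is essential. Suppose $x^*$ is the unique minimizer but some nonzero $v \in T$ has $g'(x^*;v) \le 0$. By Lemma~\ref{lem:convex_PA_func}(ii) the map $\phi(t) := g(x^* + tv)$ is a univariate convex PA function, so on some interval $[0, \varepsilon']$ it coincides with a single affine piece passing through $(0, g(x^*))$ whose slope equals the right-derivative $\phi'_+(0) = g'(x^*;v) \le 0$; shrinking $\varepsilon'$ if needed so that $x^* + tv \in \mathcal F$ for $t \in [0, \varepsilon']$, we obtain $g(x^* + \varepsilon' v) = g(x^*) + \varepsilon' g'(x^*;v) \le g(x^*)$ with $x^* + \varepsilon' v \in \mathcal F$ and $x^* + \varepsilon' v \ne x^*$ --- contradicting either the optimality of $x^*$ (if the slope is negative) or its uniqueness (if it is zero). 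Hence no such $v$ exists.

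Next I would evaluate $g'(x^*;\cdot)$ via the max-formulation (\ref{eqn:PA_max}): since $\mathcal I$ is exactly the index set active at $x^*$, one has $g'(x^*;v) = \max_{j\in\mathcal I} p_j^T v$, and discarding those $p_{i_k}$ that are convex combinations of the remaining ones (as in the definition of $W$) does not change this maximum, so $g'(x^*;v) = \max_k (W v)_k$. Thus the criterion of the previous step reads: $x^*$ is the unique minimizer iff there is no nonzero $v$ with $A v = 0$, $C_{\alpha\bullet} v \ge 0$ and $W v \le 0$; equivalently, with $H$ the $(|\alpha| + |\mathcal I|)\times N$ matrix whose rows are those of $C_{\alpha\bullet}$ followed by those of $-W$, the condition is $\{ v \mid A v = 0,\ H v \ge 0 \} = \{0\}$. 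Applying Lemma~\ref{lem:dual_lin_constraint} to this $H$ then yields precisely (i) $\{ v \mid A v = 0,\ C_{\alpha\bullet} v = 0,\ W v = 0 \} = \{0\}$ together with the existence of $z$ and strictly positive $z'$, $z''$ satisfying $A^T z = C_{\alpha\bullet}^T z' - W^T z''$, which after rearrangement is condition (ii).

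Finally, for (ii) $\Leftrightarrow$ (iii): the implication (iii) $\Rightarrow$ (ii) is immediate with $z := w$, $z' := w'$, $z'' := w''$. For (ii) $\Rightarrow$ (iii), divide through by the positive scalar $s := \mathbf 1^T z'' > 0$ and set $w := z/s$, $w' := z'/s$, $w'' := z''/s$; then $w' > 0$, $\mathbf 1^T w'' = 1$, and each component of $w''$ lies strictly between $0$ and $1$ (the upper bound being forced by $\mathbf 1^T w'' = 1$ and positivity of the other entries in the nondegenerate case $|\mathcal I| \ge 2$). I expect the only real difficulty to be the ``only if'' half of the first step: reducing global uniqueness to the ray-wise strict-increase condition genuinely relies on polyhedrality of both $g$ and $\mathcal F$ --- for the smooth strictly convex losses treated later in the paper it fails (e.g.\ $g(x) = \|x\|_2^2$ with no constraints has $x^* = 0$ as its unique minimizer yet $g'(0;v) = 0$ for every $v$) --- so that part must be PA-specific, via the affineness of $\phi$ near $0$. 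Everything afterwards reduces to the linear-programming duality already packaged in Lemma~\ref{lem:dual_lin_constraint}, plus routine normalization.
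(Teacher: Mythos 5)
Your proof is correct and follows essentially the same route as the paper's: the paper localizes via the identity $g(x)=g(x^*)+\max_{i\in\Ical}p_i^T(x-x^*)$ for $x$ near $x^*$ and reduces uniqueness of $x^*$ to the statement $\{v \mid Av=0,\ C_{\alpha\bullet}v\ge 0,\ Wv\le 0\}=\{0\}$, which is exactly your directional-derivative criterion on the tangent cone, and then applies Lemma~\ref{lem:dual_lin_constraint} with $H=\begin{bmatrix}C_{\alpha\bullet}\\ -W\end{bmatrix}$ and the same scaling by $\mathbf 1^Tz''$ for (ii)$\Leftrightarrow$(iii). Your one-dimensional affine-piece argument for the ``only if'' half and your remark on dropping the convex-combination rows of $W$ are just slightly more explicit versions of steps the paper states without detail, so there is nothing genuinely different to compare.
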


\begin{proof}
Clearly, $x^*$ is a unique (global) minimizer of the convex optimization problem (\ref{eqn:P_I02}) if and only if $x^*$ is a local unique minimizer of (\ref{eqn:P_I02}).
It is easy to see that for all $x$ sufficiently close to $x^*$, $g(x)=g(x^*)+ \max_{i\in \mathcal I} \big( p^T_i (x-x^*)\big)$. In other words, $g(x)-g(x^*)$ is  piecewise linear (and convex) in $(x-x^*)$ for all $x$ sufficiently close to $x^*$.
%
%
By this observation, we deduce that $x^*$ is the unique minimizer of (\ref{eqn:P_I02}) if and only if $v^*=0$ is the unique minimizer of the following convex optimization problem:
%
%
\begin{equation} \label{eqn:local_P_I02}
 \quad \min_{  v \in \mathbb{R}^N} \ \Big(\max_{i\in \Ical}  p^T_j v \Big),
\quad \text{subject to} \quad    A v= 0, \quad   C_{\alpha \bullet}  v \ge   0.
\end{equation}
Furthermore, it is easy to verify that $v^*=0$ is the unique minimizer of (\ref{eqn:local_P_I02}) if and only if
\[
\Big\{ \, v \in \mathbb R^{N} \, \big | \, A v =0, \ C_{\alpha \bullet} v \ge 0,  \ \max_{i\in \Ical}  p^T_i v \le 0 \, \Big\} \, = \, \big\{ \, 0 \big\}.
\]
In light of the definition of the matrix $W$ given in (\ref{eqn:b_Q}), we see that $\max_{i\in \Ical}  p^T_i v \le 0$ is equivalent to $W v \le 0$. Hence, $v^*=0$ is the unique minimizer of (\ref{eqn:local_P_I02}) if and only if $\{ v \in \mathbb R^{N} \, | \, A v =0, \ C_{\alpha \bullet} v \ge 0,  \ W \, v \le 0 \} = \{ 0 \}$. By setting $H=\begin{bmatrix}  C_{\alpha\bullet} \\ -W \end{bmatrix}$, we deduce via
 Lemma~\ref{lem:dual_lin_constraint} that $\{ v \in \mathbb R^{N} \, | \, A v =0, \ C_{\alpha \bullet} v \ge 0,  \ W \, v \le 0 \} = \{ 0 \}$ if and only if conditions (i) and (ii) hold. This leads to the desired result.

We finally show the equivalence of conditions (ii) and (iii). Clearly, condition (iii) implies condition (ii). Conversely, suppose there exist $z \in \mathbb R^m$, $z' \in \mathbb R^{|\alpha|}_{++}$, and $z'' \in \mathbb R^{|\Ical|}_{++}$  such that $A^T z - C^T_{\alpha\bullet} z' + W^T z'' \, = \, 0$. Note that $\mathbf 1^T z''>0$. Therefore, letting
 \[
   w = \frac{z}{\mathbf 1^T z''}, \qquad w' = \frac{z'}{\mathbf 1^T z''}, \qquad w'' = \frac{z''}{\mathbf 1^T z''},
 \]
 we obtain condition (iii) with the above $w, w'$ and $w''$. Hence, conditions (ii) and (iii) are equivalent.
\end{proof}

%
\subsubsection{Comparison with Related Results in the Literature} \label{subsubsect:comparision_for_BP}

   The paper \cite{Gilbert_JOTA17} studies a problem similar to (\ref{eqn:P_I02}) but without the linear inequality constraint. For comparison, we apply these tools to the problem (\ref{eqn:P_I02}). Define $\mathcal S:=\{ x \, | \, A x = y \}$ and $\mathcal P:=\{ x \, | \, C x \ge d \}$.
 %
 %
  Note that the problem (\ref{eqn:P_I02}) is equivalent to the unconstrained problem: $\min_{x \in \mathbb R^N} J(x)$, where $J(x):=g(x) + \delta_{\mathcal S}(x) + \delta_{\mathcal P}(x)$, and $\delta$ is the indicator function defined in Remark~\ref{remark:PA_function}. Since $\mathcal S$ and $\mathcal P$ are polyhedral, $x^*$ is the unique minimizer of (\ref{eqn:P_I02}) if and only if $0\in \mbox{int}\big( \partial J(x^*)\big)$ \cite[Lemma 3.2]{Gilbert_JOTA17}, where $\partial J(x^*)=\partial g(x^*)+\mathcal N_{\mathcal S}(x^*) + \mathcal N_{\mathcal P}(x^*)$. Since $\mathcal N_{\mathcal S}(x^*)=R(A^T)$ and $\mathcal N_{\mathcal P}(x^*)=\{ C^T_{\alpha\bullet} z \, | \, z \le 0 \}$,   the condition $0\in \mbox{int}\big( \partial J(x^*)\big)$ is further equivalent to the following two conditions \cite[Proposition 4.2]{Gilbert_JOTA17}:
  \begin{itemize}
   \item [(a)] $0 \in \mbox{ri}( \partial g(x^*)) + R(A^T) + \mbox{ri} (\{ C^T_{\alpha\bullet} z \, | \, z \le 0 \})$; and
    \item [(b)] $\mbox{aff}( \partial g(x^*) + R(A^T) + \{ C^T_{\alpha\bullet} z \, | \, z \le 0 \})=\mathbb R^N$, where $\mbox{aff}(\cdot)$ denotes the affine hull of a set.
  \end{itemize}
  In what follows, we show that conditions (a) and (b) are equivalent to conditions (i) and (iii) of Theorem~\ref{thm:unique_optimal_P_I02}. To achieve this goal, we first present a lemma  which gives an explicit characterization of relative interiors of a polytope and a polyhedral cone.

  \begin{lemma} \label{lem:relative_interior}
 Let $\mathcal C = \mbox{conv}(a_1, \ldots, a_k)$ and $\mathcal K=\mbox{cone}(b_1, \ldots, b_\ell)$, where $a_1, \ldots, a_k \in \mathbb R^n$ and $b_1, \ldots, b_\ell \in \mathbb R^n$. Then the relative interiors of $\mathcal C$ and $\mathcal K$ are
 \[
  \mbox{ri} \, \mathcal C = \Big\{ \sum^k_{i=1} \lambda_i a_i \, \Big | \, \sum^k_{i=1} \lambda_i = 1, \ 0< \lambda_i <1, \, \forall \, i=1, \ldots, k \Big\}, \quad \mbox{ri} \, \mathcal K = \Big\{ \sum^\ell_{j=1} \mu_j b_j \, \Big | \,  \mu_j>0, \ \forall \, j=1, \ldots, \ell \Big\}.
 \]
\end{lemma}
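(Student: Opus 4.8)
The plan is to prove both identities at once by realizing $\mathcal C$ and $\mathcal K$ as linear images of the standard simplex and of the nonnegative orthant, and then using the classical fact that the relative-interior operation commutes with linear maps. Precisely, set $\Delta:=\{\lambda\in\mathbb R^k_+\,|\,\mathbf 1^T\lambda=1\}$ and let $L_a:\mathbb R^k\to\mathbb R^n$ be the linear map $L_a\lambda:=\sum_{i=1}^k\lambda_i a_i$, so that $\mathcal C=L_a(\Delta)$; similarly let $L_b:\mathbb R^\ell\to\mathbb R^n$, $L_b\mu:=\sum_{j=1}^\ell\mu_j b_j$, so that $\mathcal K=L_b(\mathbb R^\ell_+)$. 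Since $\mbox{ri}(L(D))=L(\mbox{ri}\,D)$ for any linear transformation $L$ and convex set $D$ (with no closedness hypothesis needed), e.g. \cite[Theorem 6.6]{Rockafellar_book70}, we get $\mbox{ri}\,\mathcal C=L_a(\mbox{ri}\,\Delta)$ and $\mbox{ri}\,\mathcal K=L_b(\mbox{ri}\,\mathbb R^\ell_+)$, and it only remains to identify the two relative interiors on the right and push them forward.

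The orthant is the easy case: $\mbox{aff}(\mathbb R^\ell_+)=\mathbb R^\ell$, hence $\mbox{ri}\,\mathbb R^\ell_+=\mbox{int}\,\mathbb R^\ell_+=\mathbb R^\ell_{++}$, and applying $L_b$ gives $\mbox{ri}\,\mathcal K=\{\sum_{j=1}^\ell\mu_j b_j\,|\,\mu_j>0\ \forall j\}$, the second asserted formula. For the simplex, $\mbox{aff}(\Delta)$ is the hyperplane $\{\lambda\,|\,\mathbf 1^T\lambda=1\}$, and a point $\lambda^0\in\Delta$ has a neighborhood inside $\Delta$ relative to this hyperplane exactly when no inequality $\lambda_i\ge 0$ is active at $\lambda^0$, i.e. when $\lambda^0_i>0$ for all $i$ (if some $\lambda^0_i=0$ then points of the hyperplane arbitrarily close to $\lambda^0$ have that coordinate negative, whereas if all coordinates are positive they remain positive under small perturbations); thus $\mbox{ri}\,\Delta=\{\lambda\,|\,\lambda_i>0\ \forall i,\ \mathbf 1^T\lambda=1\}$, and applying $L_a$ yields $\mbox{ri}\,\mathcal C=\{\sum_{i=1}^k\lambda_i a_i\,|\,\lambda_i>0\ \forall i,\ \sum_i\lambda_i=1\}$. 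This matches the stated form: for $k\ge 2$ the bound $\lambda_i<1$ is automatic from $\lambda_i>0$ for all $i$ and $\sum_j\lambda_j=1$ (as $\lambda_i=1-\sum_{j\ne i}\lambda_j$), while the degenerate case $k=1$, in which $\mathcal C=\{a_1\}$ is its own relative interior, may be set aside.

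I expect the only genuinely substantive point to be the inclusion $\mbox{ri}\,\mathcal C\subseteq\{\sum_i\lambda_i a_i\,|\,\lambda_i>0\ \forall i,\ \sum_i\lambda_i=1\}$, i.e. that every relative-interior point of $\mathcal C$ admits \emph{some} representation with \emph{all} weights strictly positive, not merely some representation with nonnegative weights. The image theorem above handles this for free, but should a self-contained proof be wanted, this is exactly where the work lies: given $x\in\mbox{ri}\,\mathcal C$, take the barycenter $z:=\frac1k\sum_i a_i\in\mathcal C$; the prolongation characterization of relative interior \cite[Theorem 6.4]{Rockafellar_book70} yields $\mu>1$ with $w:=(1-\mu)z+\mu x\in\mathcal C$, so $x=\frac1\mu w+(1-\frac1\mu)z$, and writing $w=\sum_i\omega_i a_i$ with $\omega_i\ge 0$, $\sum_i\omega_i=1$, the coefficient of $a_i$ in $x$ equals $\frac1\mu\omega_i+(1-\frac1\mu)\frac1k\ge(1-\frac1\mu)\frac1k>0$. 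The reverse inclusion, and both inclusions for $\mathcal K$, follow from the same short prolongation/perturbation computation.
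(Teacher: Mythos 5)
Your proof is correct, but it follows a genuinely different route from the paper. The paper proves the lemma by writing $\mathcal C = \mbox{conv}(\{a_1\}\cup\cdots\cup\{a_k\})$ and invoking \cite[Theorem 6.9]{Rockafellar_book70} on relative interiors of convex hulls of unions of convex sets (each singleton being its own relative interior), then disposes of $\mathcal K$ with a brief appeal to positive scaling. You instead realize $\mathcal C$ and $\mathcal K$ as linear images of the standard simplex and the nonnegative orthant and invoke \cite[Theorem 6.6]{Rockafellar_book70}, that $\mbox{ri}$ commutes with linear maps, reducing everything to the elementary computation of $\mbox{ri}\,\Delta$ and $\mbox{ri}\,\mathbb R^\ell_+$; you also supply an optional self-contained prolongation argument (via \cite[Theorem 6.4]{Rockafellar_book70} and the barycenter) for the one substantive inclusion, namely that every relative-interior point of $\mathcal C$ admits a representation with \emph{all} weights strictly positive. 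What each approach buys: the paper's citation of Theorem 6.9 delivers the stated formula in a single line, whereas your image-of-a-polyhedron argument handles the polytope and the cone uniformly (no separate scaling argument for $\mathcal K$), is closer to first principles if one wants to avoid the heavier Theorem 6.9, and correctly isolates the degenerate case $k=1$, where the literal statement with $0<\lambda_i<1$ fails (the set on the right is empty while $\mbox{ri}\{a_1\}=\{a_1\}$) but the intended conclusion with $\lambda_i>0$ survives — an edge case the paper's phrasing glosses over. The only slightly glossed step on your side is the reverse inclusion in the self-contained variant, but since your primary argument via Theorem 6.6 already establishes both inclusions for both sets, the proof stands.
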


\begin{proof}
  To establish the relative interior of $\mathcal C$, we note that $\mathcal C=\mbox{conv} (\mathcal C_1 \cup \mathcal C_2 \cup \cdots \cup \mathcal C_k)$, where each $\mathcal C_i := \{ a_i \}$ is a  convex singleton set. Hence, $\mbox{ri} \, \mathcal C_i = \{ a_i \}$ for each $i$. It follows from \cite[Theorem 6.9]{Rockafellar_book70} that $\mbox{ri} \, \mathcal C = \Big\{ \sum^k_{i=1} \lambda_i \cdot \mbox{ri} \, \mathcal C_i \, | \, \sum^k_{i=1} \lambda_i = 1, \ 0< \lambda_i <1, \, \forall \, i=1, \ldots, k \Big\}$, which leads to the desired result for $\mbox{ri} \, \mathcal C$. The relative interior of $\mathcal K$ also follows by positive scaling.
\end{proof}

\begin{proposition} \label{prop:equvilance_BP}
 Conditions (a) and (b) for the optimization problem (\ref{eqn:P_I02}) are equivalent to conditions (i) and (iii) of Theorem~\ref{thm:unique_optimal_P_I02}.
\end{proposition}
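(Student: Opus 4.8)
The plan is to unfold the relative interiors appearing in (a) and the affine hull appearing in (b) into concrete linear-algebraic statements, using Lemma~\ref{lem:relative_interior} together with the subdifferential formula (\ref{eqn:subdiff_g}) and the identities $\mathcal N_{\mathcal S}(x^*)=R(A^T)$, $\mathcal N_{\mathcal P}(x^*)=\{C^T_{\alpha\bullet}z\mid z\le0\}$ recorded above, and then to match the outcome against conditions (i) and (iii). The starting point is that, by the choice of $W$ in (\ref{eqn:b_Q}), $\partial g(x^*)=\mbox{conv}(\{p_i\mid i\in\Ical\})=\{W^T\mu\mid \mu\ge0,\ \mathbf 1^T\mu=1\}$, while $R(A^T)$ is a linear subspace and $\{C^T_{\alpha\bullet}z\mid z\le0\}$ is a polyhedral cone containing the origin. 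Lemma~\ref{lem:relative_interior} then yields $\mbox{ri}(\partial g(x^*))=\{W^T\mu\mid \mathbf 1^T\mu=1,\ 0<\mu<\mathbf 1\}$ and $\mbox{ri}(\{C^T_{\alpha\bullet}z\mid z\le0\})=\{C^T_{\alpha\bullet}z\mid z<0\}$, whereas the $R(A^T)$ term stays unrestricted. Substituting these descriptions into (a) shows at once that (a) \emph{is} condition (iii) written out (with $w''=\mu$ and $w'=-z$); combined with the equivalence (ii)$\Leftrightarrow$(iii) from Theorem~\ref{thm:unique_optimal_P_I02}, this settles the ``interior'' half.

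For (b) I would first invoke the elementary fact that the affine hull of a Minkowski sum is the (Minkowski) sum of the affine hulls, so that $\mbox{aff}(\partial g(x^*)+R(A^T)+\mathcal N_{\mathcal P}(x^*))=\mbox{aff}(\partial g(x^*))+R(A^T)+R(C^T_{\alpha\bullet})$ --- here using that the affine hull of a linear subspace is itself, and that the affine hull of a cone through the origin is its linear span, which for $\{C^T_{\alpha\bullet}z\mid z\le0\}$ equals $R(C^T_{\alpha\bullet})$. Since $\mbox{aff}(\partial g(x^*))=\{W^T\mu\mid \mathbf 1^T\mu=1\}$, condition (b) becomes
\[
  \{W^T\mu\mid \mathbf 1^T\mu=1\}\ +\ R(A^T)\ +\ R(C^T_{\alpha\bullet})\ =\ \mathbb R^N .
\]
This affine set equals $\mathbb R^N$ precisely when its direction subspace is all of $\mathbb R^N$, i.e.\ when no nonzero $v$ is orthogonal to every difference of its elements; a vector $v$ has this orthogonality property exactly when $Av=0$, $C_{\alpha\bullet}v=0$, and $\mu^T(Wv)$ is constant over $\{\mu\mid \mathbf 1^T\mu=1\}$, the last being equivalent to $Wv\in\mbox{span}(\mathbf 1)$. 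Hence (b) is equivalent to the condition
\[
  (\mathrm{b}'):\qquad \{\,v\in\mathbb R^N\mid Av=0,\ C_{\alpha\bullet}v=0,\ Wv\in\mbox{span}(\mathbf 1)\,\}\ =\ \{0\}.
\]

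It remains to reconcile $(\mathrm{b}')$ with (i). Since the set in (i) is contained in that of $(\mathrm{b}')$ and always contains $0$, $(\mathrm{b}')$ implies (i) with no extra hypothesis. For the reverse I would bring in condition (ii), available via (ii)$\Leftrightarrow$(iii)$\Leftrightarrow$(a) established above: if $v$ satisfies $Av=0$, $C_{\alpha\bullet}v=0$ and $Wv=t\mathbf 1$, then pairing $v$ with the defining identity $A^Tz-C^T_{\alpha\bullet}z'+W^Tz''=0$ of (ii) gives $0=(Av)^Tz-(C_{\alpha\bullet}v)^Tz'+(Wv)^Tz''=t\,\mathbf 1^Tz''$, and since $z''>0$ this forces $t=0$, hence $Wv=0$, hence $v=0$ by (i). Thus (i) and (ii) together give $(\mathrm{b}')$. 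Assembling the pieces: (a)$\wedge$(b) $\Rightarrow$ (iii)$\wedge(\mathrm{b}')$ $\Rightarrow$ (iii)$\wedge$(i), and (i)$\wedge$(iii) $\Rightarrow$ (ii)$\wedge$(i) $\Rightarrow(\mathrm{b}')$, which with (a) gives (a)$\wedge$(b). The step I expect to be the subtle one is precisely the affine-hull reduction: because $\mbox{aff}(\partial g(x^*))$ need not pass through the origin, condition (b) does not collapse to $Wv=0$ but only to $Wv\in\mbox{span}(\mathbf 1)$, and it is exactly condition (ii) (equivalently (iii)) that closes this gap --- so the equivalence genuinely uses (a) and (b) \emph{jointly}, not one at a time.
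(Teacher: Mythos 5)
Your proposal is correct and follows essentially the same route as the paper: unfold condition (a) via Lemma~\ref{lem:relative_interior} into exactly condition (iii), reduce condition (b) to the linear-algebraic statement that $Av=0$, $C_{\alpha\bullet}v=0$, $Wv\in\mbox{span}(\mathbf 1)$ force $v=0$, note this trivially implies (i), and close the gap in the converse by pairing $v$ with the dual identity so that positivity of $z''$ kills the $\mbox{span}(\mathbf 1)$ ambiguity. The only differences --- parametrizing $\mbox{aff}(\partial g(x^*))$ as $\{W^T\mu\mid\mathbf 1^T\mu=1\}$ instead of $p_{i_1}+\mbox{span}(p_{i_2}-p_{i_1},\ldots)$, and routing the last step through (ii) rather than (iii) --- are cosmetic.
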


\begin{proof}
  By Lemma~\ref{lem:relative_interior} and the subdifferential of $g$ at $x^*$ given in (\ref{eqn:subdiff_g}), we have
  \[
    \mbox{ri}\big( \partial g(x^*) \big) = \Big\{ \sum_{i\in \Ical} \lambda_i \cdot p_i \, \Big | \, \sum_{i\in\Ical} \lambda_i = 1, \ 0< \lambda_i <1, \, \forall \, i\in \Ical \Big\}, \quad
    \mbox{ri} \big(\{ C^T_{\alpha\bullet} z \, | \, z \le 0 \} \big) = \{ C^T_{\alpha\bullet} z \, | \, z < 0 \}.
  \]
  Therefore, condition (a) holds if and only if there exist $z$, $z'>0$, and $z''$ with $0<z''<\mathbf 1$ with $\mathbf 1^T z''=1$ such that $A^T z - C^T_{\alpha\bullet} z' + W^T z''=0$, which is exactly condition (iii) of Theorem~\ref{thm:unique_optimal_P_I02}.

 Moreover,
 $\mbox{aff}( \partial g(x^*)  + R(A^T) + \{ C^T_{\alpha\bullet} z \, | \, z \le 0 \}) = \mbox{aff}( \partial g(x^*)) + \mbox{aff}(R(A^T)) + \mbox{aff}(\{ C^T_{\alpha\bullet} z \, | \, z \le 0 \})$,
 where, in view of $\partial g(x^*) = \mbox{conv}(p_{i_1}, p_{i_2}, \ldots, p_{i_{|\Ical|}})$,  we have
 \[
  \mbox{aff}( \partial g(x^*)) = p_{i_1} + \mbox{span}\big(p_{i_2}-p_{i_1}, \ldots,  p_{i_{|\Ical|}}-p_{i_1} \big), \ \ \mbox{aff}(R(A^T)) = R(A^T), \ \
   \mbox{aff}(\{ C^T_{\alpha\bullet} z \, | \, z \le 0 \}) = R( C^T_{\alpha\bullet}).
 \]
 Therefore, condition (b) is equivalent to
 \[
   \underbrace{\mbox{span}\big(p_{i_2}-p_{i_1}, \ldots,  p_{i_{|\Ical|}}-p_{i_1} \big)}_{:=\mathcal V} + R(A^T) + R( C^T_{\alpha\bullet}) =\mathbb R^N,
 \]
 which is further equivalent to condition (b'): $\mathcal V^\perp \cap N(A) \cap N(C_{\alpha\bullet}) = \{ 0 \}$, where $\mathcal V^\perp = \{ v \in \mathbb R^N \, | \, p^T_{i_1} v = p^T_{i_2} v = \cdots =  p^T_{i_{|\Ical|}} v \}$. Obviously, condition (b') implies
 condition (i) of Theorem~\ref{thm:unique_optimal_P_I02}. We show next that if conditions (i) and (iii) holds, then condition (b') holds. It follows from condition (iii) that for any $v \in \mathbb R^N$, $v^T A z - v^T C^T_{\alpha\bullet} z' + v^T W^T z''=0$ for some $z$, $z'>0$, and $z''$ with $0<z''<\mathbf 1$ with $\mathbf 1^T z''=1$. Therefore, for any $v\in \mathcal V^\perp \cap N(A) \cap N(C_{\alpha\bullet})$, i.e., $A v =0, C_{\alpha \bullet} v = 0$, and $p^T_{i_1} v = p^T_{i_2} v = \cdots =  p^T_{i_{|\Ical|}} v$, we obtain $(W v)^T z''=0$. By virtue of the expression of the matrix $W$ in (\ref{eqn:b_Q}),  we further have $0=(W v)^T z'' = (p^T_{i_1} v \cdot \mathbf 1)^T z'' = (p^T_{i_1} v) \cdot \mathbf 1^T z''=p^T_{i_1} v$. This shows that $W v =0$.
%
%
 Along with $A v =0$ and $C_{\alpha \bullet} v = 0$, we see via condition (i) of Theorem~\ref{thm:unique_optimal_P_I02} that $v=0$ such that condition (b') holds. Consequently, conditions (a) and (b) hold if and only if conditions (i) and (iii) of Theorem~\ref{thm:unique_optimal_P_I02} hold.
\end{proof}
  %
%
%

\begin{remark} \rm \label{remark:comparsion_Gilbert}
 Proposition~\ref{prop:equvilance_BP} shows that the techniques developed in \cite{Gilbert_JOTA17} can be used to derive the exactly same solution uniqueness conditions given in Theorem~\ref{thm:unique_optimal_P_I02}. However, when establishing explicit uniqueness conditions in terms of problem parameters, the paper \cite{Gilbert_JOTA17} considers a particular class of convex PA functions, i.e., polyhedral gauges, and employs the inner representation of the unit sublevel set of a polyhedral gauge to obtain (equivalent) uniqueness conditions in a different form.
 Instead, the present paper gives a much simpler approach to derive the explicit uniqueness conditions in Theorem~\ref{thm:unique_optimal_P_I02} for a general convex PA function via its max-formulation, which can be easily  applied to any specific convex PA function.
For example, by leveraging Lemma~\ref{lem:polyhedral_gauge} and Theorem~\ref{thm:unique_optimal_P_I02}, explicit uniqueness conditions can be readily obtained for a polyhedral gauge.
 Furthermore, the proposed approach can be exploited for other relevant problems as shown in the subsequent subsections, and thus provides a simple, albeit unifying, framework for a broad class of problems.
  Nevertheless, conditions (a)-(b) derived in \cite{Gilbert_JOTA17} give better geometric interpretation of the conditions obtained in Theorem~\ref{thm:unique_optimal_P_I02}.
\end{remark}

%
%
%
\subsection{Unique Optimal Solution to the LASSO-like Problem} \label{subsect:C_LASSO_unique}

%
Letting $f:\mathbb R^m \rightarrow \mathbb R$ be a $C^1$ strictly convex function,
%
%
we consider the following convex optimization problem motivated by the constrained LASSO:
\begin{equation} \label{eqn:P_II02}  
%
%
\min_{  x \in \mathbb{R}^N} \  f(Ax - y) + g(x) \quad \text{subject to} \quad   C  x \ge   d.
\end{equation}
We assume that this optimization problem has an optimal solution.
%
%
 To characterize a unique optimal solution to (\ref{eqn:P_II02}), we first present some preliminary results as follows.

Being an extension of  \cite[Lemma 4.1]{ZhangYC_JOTA15}, the following lemma can be shown via an elementary argument in convex analysis; its proof is thus omitted.

\begin{lemma} \label{lem:f_g_constant}
Let $f:\mathbb R^m \rightarrow \mathbb R$ be a strictly convex function, and $h:\mathbb R^N \rightarrow \mathbb R$  be a convex function. If $f(Ax -y) + h(x)$ is constant on a convex set $\mathcal S \subseteq \mathbb R^N$, then $Ax=A z$ and $h(x)=h(z)$ for all $x, z\in \mathcal S$.
\end{lemma}

%
%
%


In light of Lemma~\ref{lem:f_g_constant}, we obtain the following proposition which generalizes \cite[Theorem 2.1]{ZhangYC_JOTA15} using a similar argument. To be self-contained, we present its proof as follows.

\begin{proposition} \label{prop:equivalence_LASSO}
Let $f:\mathbb R^m \rightarrow \mathbb R$ be a strictly convex function, $h:\mathbb R^N \rightarrow \mathbb R$  be a convex function, and $\mathcal C$ be a convex set in $\mathbb R^N$ such that the following optimization problem has a minimizer $x^* \in \mathbb R^N$:
\[
  (P_0): \qquad \min_{x \in \mathbb R^N} \ f(Ax-y) + h(x) \quad \text{subject to} \quad x \in \mathcal C.
\]
Then $x^*$ is the unique minimizer of $(P_0)$ if and only if $x^*$ is the unique minimizer of the following problem:
\[
    (P_1): \qquad \min_{x \in \mathbb R^N} \  h(x) \quad \text{subject to} \quad A x = A x^*, \mbox{ and } x \in \mathcal C.
\]
\end{proposition}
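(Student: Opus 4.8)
The plan is to prove the two implications separately, using Lemma~\ref{lem:f_g_constant} as the bridge between the two optimization problems. First I would establish a preliminary observation: since $x^*$ is feasible for $(P_0)$ and solves it, and since every feasible point of $(P_1)$ is also feasible for $(P_0)$ (the constraint $Ax = Ax^*$ forces $f(Ax-y) = f(Ax^*-y)$, and $x \in \mathcal{C}$), the value of $(P_0)$ at any $(P_1)$-feasible $x$ equals $f(Ax^*-y) + h(x)$. Consequently $x^*$ being optimal for $(P_0)$ immediately implies $x^*$ is optimal (not necessarily uniquely) for $(P_1)$, and conversely the minimum value of $(P_1)$ is attained and equals $h(x^*)$. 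This reduces everything to comparing the solution \emph{sets}.

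For the ``only if'' direction, suppose $x^*$ is the unique minimizer of $(P_0)$. Let $\bar x$ be any minimizer of $(P_1)$. Then $\bar x \in \mathcal{C}$, $A\bar x = Ax^*$, so $f(A\bar x - y) = f(Ax^* - y)$, and $h(\bar x) = h(x^*)$ by optimality in $(P_1)$; hence $f(A\bar x - y) + h(\bar x) = f(Ax^*-y) + h(x^*)$, making $\bar x$ a minimizer of $(P_0)$. Uniqueness of $x^*$ in $(P_0)$ forces $\bar x = x^*$, so $x^*$ is the unique minimizer of $(P_1)$.

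For the ``if'' direction, suppose $x^*$ is the unique minimizer of $(P_1)$, and let $\bar x$ be any minimizer of $(P_0)$; I want $\bar x = x^*$. Consider the segment $\mathcal{S} := \{(1-t)x^* + t\bar x : t \in [0,1]\}$, which lies in $\mathcal{C}$ by convexity. Since both endpoints achieve the optimal value of $(P_0)$ and the objective $f(A\,\cdot\,-y) + h(\cdot)$ is convex on $\mathcal{C}$, it is constant on $\mathcal{S}$. Apply Lemma~\ref{lem:f_g_constant} (with the convex function $h$ in the role there): this yields $A\bar x = Ax^*$ and $h(\bar x) = h(x^*)$. Thus $\bar x$ is feasible for $(P_1)$ and attains the value $h(x^*)$, so $\bar x$ is a minimizer of $(P_1)$; uniqueness gives $\bar x = x^*$. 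Therefore $x^*$ is the unique minimizer of $(P_0)$.

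The only subtle point—and the step I would be most careful about—is the invocation of Lemma~\ref{lem:f_g_constant}: one must verify that $f \circ (A\,\cdot\, - y) + h(\cdot)$ is genuinely constant on the segment $\mathcal{S}$, which follows because a convex function that equals its minimum at the two endpoints of a segment is constant on the whole segment. Everything else is bookkeeping about which constraint sets contain which points. No compactness, differentiability, or piecewise-affine structure is needed here; strict convexity of $f$ enters solely through Lemma~\ref{lem:f_g_constant}.
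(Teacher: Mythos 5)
Your proof is correct and follows essentially the same route as the paper: both directions hinge on Lemma~\ref{lem:f_g_constant} applied to a convex set on which the objective of $(P_0)$ is constant, yielding $A\bar x = Ax^*$ and $h(\bar x)=h(x^*)$, after which the two solution sets are matched up exactly as in the paper's argument. The only cosmetic difference is that you apply the lemma to the segment joining $x^*$ and an arbitrary $(P_0)$-minimizer, whereas the paper applies it to the full (convex) solution set $\mathcal S_0$; this changes nothing of substance.
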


\begin{proof}
Let $\mathcal S_0$ be the solution set of $(P_0)$. It is easy to see that $\mathcal S_0$ is convex on which $f(Ax-y)+h(x)$ is constant. By Lemma~\ref{lem:f_g_constant}, we have $Ax = A x^*$ and $h(x)=h(x^*)$ for any $x \in \mathcal S_0 \subseteq \mathcal C$.
To show the ``if'' part, suppose that $x^*$ is the unique minimizer of $(P_1)$ but there exists $z \in \mathcal S_0$ with $z \ne x^*$. It follows from the previous result that $h(z)=h(x^*)$, $Ax= A x^*$, and $z \in \mathcal C$, contradicting the solution uniqueness of $(P_1)$. Conversely, for the ``only if'' part, we first show that $x^*$ is a minimizer of $(P_1)$. Suppose not, i.e., there exists $z \in \mathcal C$ with $A z = A x^*$ such that $h(z)< h(x^*)$. Then we have $f(A z -y)+ h(z)<f(A x^*-y)+h(x^*)$. This implies that $x^*$ is not a minimizer of $(P_0)$, contradiction. The solution uniqueness of $(P_1)$ follows directly from that of $(P_0)$ and the result given at the beginning of the proof.
\end{proof}

\begin{theorem} \label{thm:unique_optimal_P_II02}
 Let $x^* \in \mathbb R^N$ be a feasible point of the problem (\ref{eqn:P_II02}), where $f:\mathbb R^m \rightarrow \mathbb R$ is a $C^1$  strictly convex function.
 Then $x^*$ is the unique minimizer of (\ref{eqn:P_II02}) if and only if all the following conditions hold:
  \begin{itemize}
    \item [(i)] $\{ v \in \mathbb R^{N} \, | \, A v =0, \ C_{\alpha \bullet} v = 0, \ W v = 0 \} = \{ 0 \}$;

    \item [(ii)] There exist $z \in \mathbb R^m$, $z' \in \mathbb R^{|\alpha|}_{++}$, and $z'' \in \mathbb R^{|\Ical|}$ with $0<z''<\mathbf 1$ and $\mathbf 1^T z''=1$ such that
     $
         A^T z - C^T_{\alpha\bullet} z' + W^T z'' \, = \, 0
     $;
    \item [(iii)] There exist $w \in \mathbb R^{|\alpha|}_+$ and  $w' \in \mathbb R^{|\Ical|}$ with $0 \le w' \le \mathbf 1$ and $\mathbf 1^T w'=1$ such that
         $
             A^T \nabla f(A x^* -y)- C^T_{\alpha\bullet} \, w + W^T w' \, = \, 0.
         $
  \end{itemize}
\end{theorem}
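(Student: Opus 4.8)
The plan is to combine Proposition~\ref{prop:equivalence_LASSO} (with $h = g$ and $\mathcal C = \mathcal P$) with the local analysis already used in the proof of Theorem~\ref{thm:unique_optimal_P_I02}. By Proposition~\ref{prop:equivalence_LASSO}, $x^*$ is the unique minimizer of (\ref{eqn:P_II02}) if and only if $x^*$ is the unique minimizer of the problem $\min_{x} g(x)$ subject to $Ax = Ax^*$ and $Cx \ge d$. Since $Ax^* = y$ need not hold here (there is no equality constraint in (\ref{eqn:P_II02})), we set $\wt y := A x^*$ and observe this auxiliary problem is exactly of the form (\ref{eqn:P_I02}) with $y$ replaced by $\wt y$. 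Applying Theorem~\ref{thm:unique_optimal_P_I02} to it immediately yields conditions (i) and (ii) (the latter being condition (iii) of Theorem~\ref{thm:unique_optimal_P_I02}, which is equivalent to its condition (ii)). So conditions (i) and (ii) together are precisely the statement that $x^*$ uniquely solves the auxiliary basis-pursuit-like problem.

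It remains to show that, given (i)--(ii), the extra condition (iii) is equivalent to $x^*$ actually being a minimizer of (\ref{eqn:P_II02}) — equivalently (by Proposition~\ref{prop:equivalence_LASSO}), that $x^*$ is a minimizer of the auxiliary problem. This is a first-order optimality (KKT) condition. The objective $f(Ax-y) + g(x) + \delta_{\mathcal P}(x)$ is convex, so $x^*$ is optimal for (\ref{eqn:P_II02}) iff $0 \in A^T \nabla f(Ax^* - y) + \partial g(x^*) + \mathcal N_{\mathcal P}(x^*)$. Now I would substitute the known descriptions: $\partial g(x^*) = \mathrm{conv}\{p_i : i \in \Ical\}$, which by the convention on $W$ in (\ref{eqn:b_Q}) equals $\{W^T w' : w' \ge 0, \mathbf 1^T w' = 1\}$; and $\mathcal N_{\mathcal P}(x^*) = \{-C^T_{\alpha\bullet} w : w \ge 0\}$ (normal cone of a polyhedron at a point with active set $\alpha$). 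Substituting these gives exactly condition (iii): there exist $w \in \mathbb R^{|\alpha|}_+$ and $w' \in \mathbb R^{|\Ical|}$ with $0 \le w' \le \mathbf 1$, $\mathbf 1^T w' = 1$, such that $A^T \nabla f(Ax^*-y) - C^T_{\alpha\bullet} w + W^T w' = 0$. (The bound $w' \le \mathbf 1$ is automatic from $w' \ge 0$ and $\mathbf 1^T w' = 1$, and is kept only for notational parallelism with condition (ii).)

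Putting the two halves together: if $x^*$ is the unique minimizer of (\ref{eqn:P_II02}), then it is in particular a minimizer, giving (iii), and by Proposition~\ref{prop:equivalence_LASSO} it uniquely solves the auxiliary problem, giving (i)--(ii) via Theorem~\ref{thm:unique_optimal_P_I02}. Conversely, if (i)--(iii) hold, then (iii) makes $x^*$ a minimizer of (\ref{eqn:P_II02}), hence (Proposition~\ref{prop:equivalence_LASSO}) a minimizer of the auxiliary problem, and (i)--(ii) force it to be the \emph{unique} minimizer there, hence (Proposition~\ref{prop:equivalence_LASSO} again) the unique minimizer of (\ref{eqn:P_II02}).

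The main obstacle — really the only non-bookkeeping point — is the careful handling of the fact that conditions (i)--(ii) only concern the auxiliary problem $\min g$ subject to $Ax = Ax^*$, $Cx \ge d$, which is a problem whose feasibility of $x^*$ is automatic by construction; one must be sure that Proposition~\ref{prop:equivalence_LASSO} is applicable, i.e.\ that (\ref{eqn:P_II02}) has a minimizer (assumed) and that $f$ is strictly convex (assumed, being $C^1$ strictly convex). A secondary subtlety is verifying that condition (iii) as stated is genuinely equivalent to the KKT inclusion and not merely implied by it; this hinges on $0 \le w' \le \mathbf 1$ with $\mathbf 1^T w' = 1$ describing exactly the probability simplex that generates $\partial g(x^*) = \mathrm{conv}\{p_i : i\in\Ical\}$, which follows from the reduction convention on $W$ (no $p_{i_k}$ is a convex combination of the others, so every extreme point of $\partial g(x^*)$ appears as a row of $W$).
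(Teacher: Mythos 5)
Your proposal is correct and follows essentially the same route as the paper: it first reduces solution uniqueness of (\ref{eqn:P_II02}) to that of the auxiliary basis-pursuit-like problem $\min g(x)$ subject to $Ax=Ax^*$, $Cx\ge d$ via Proposition~\ref{prop:equivalence_LASSO} and Theorem~\ref{thm:unique_optimal_P_I02} (giving (i)--(ii)), and identifies (iii) with the first-order optimality condition $0\in A^T\nabla f(Ax^*-y)+\partial g(x^*)+\mathcal N_{\mathcal P}(x^*)$ using the same descriptions of $\partial g(x^*)$ and $\mathcal N_{\mathcal P}(x^*)$. The only differences are cosmetic (order of the two steps and phrasing the optimality condition as KKT rather than citing the cited optimality theorem), so no gap to report.
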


\begin{proof}
 We first show that $x^*$ is a minimizer of (\ref{eqn:P_II02}) if and only if condition (iii) holds. Recall that $\mathcal P:=\{x \, | \, C x \ge d \}$. Since $f(A x - y) + g(x)$ is a real-valued convex function on $\mathbb R^N$ and $\mathcal P$ is a closed convex set,  it follows from \cite[Theorem 3.33]{Ruszczynski_book06} that $x^*$ is a minimizer of (\ref{eqn:P_II02}) if and only if $0\in \partial f(Ax^* -y) + \partial g(x^*) + \mathcal N_{\mathcal P}(x^*)$. In light of
 \[
    \partial f(Ax^* -y) =\{ A^T \nabla f(A x^*-y) \}, \quad \partial g(x^*) = \mbox{conv}(p_{i_1}, p_{i_2}, \ldots, p_{i_{|\Ical|}}), \quad \mathcal N_{\mathcal P}(x^*) = \{ C^T_{\alpha\bullet} u \, | \, u \le 0 \},
 \]
we see that $x^*$ is a minimizer of (\ref{eqn:P_II02}) if and only if condition (iii) holds.

Applying  Proposition~\ref{prop:equivalence_LASSO} with $\mathcal C:=\{ x \in \mathbb R^N \, | \, C x \ge d \}$ and $h(x)=g(x)$, we deduce that a minimizer $x^*$ is the unique minimizer of (\ref{eqn:P_II02}) if and only if it is the unique minimizer of the following problem in the form of (\ref{eqn:P_I02}):
 \[
  (P_2): \qquad \min_{x \in \mathbb R^N} \  g(x) \quad \text{subject to} \quad A x = A x^*, \ \mbox{ and } \ C x \ge d.
 \]
 Clearly, $x^*$ is a feasible point of $(P_2)$. Hence, by Theorem~\ref{thm:unique_optimal_P_I02}, $x^*$ is the unique minimizer of $(P_2)$ if and only if conditions (i) and (ii) hold. This completes the proof.
\end{proof}

%
%
%

%
\subsection{Unique Optimal Solution to the Basis Pursuit Denoising I-like Problem}

%

%

Letting $f:\mathbb R^m \rightarrow \mathbb R$ be a $C^1$ strictly convex function and $\varepsilon\in \mathbb R$,
%
%
we consider the following convex optimization problem motivated by the BPDN-I problem with an additional linear inequality constraint:
\begin{equation} \label{eqn:P_III02}  
%
%
\min_{  x \in \mathbb{R}^N} \
 g(x)
\quad \ \ \text{subject to}
\quad   f(Ax - y)\le \varepsilon, \ \ \mbox{ and } \ \ C  x \ge   d.
\end{equation}
We assume that this problem has an optimal solution.
Note that this problem is different from that treated in \cite{Gilbert_JOTA17}, since the constraints are no longer polyhedral in general. Moreover, the papers \cite{ZhangYC_JOTA15, Zhang_Yan_Yun_ACM16} consider a problem similar to (\ref{eqn:P_III02}) with $g(x)=\| E x \|_1$ or $g(x)=\| x\|_1$ but without the linear inequality constraint $C x \ge d$, and they show that its solution uniqueness can be reduced to that of a relevant basis pursuit problem. However, this reduction does not hold for (\ref{eqn:P_III02}) due to the presence of the general linear inequality constraint; see Section~\ref{subsect:BPDN_I_recovery_comp} for a counterexample. This calls for new techniques to handle (\ref{eqn:P_III02}).

\begin{theorem} \label{thm:unique_optimal_P_III02}
 Let $x^* \in \mathbb R^N$ be a feasible point of the problem (\ref{eqn:P_III02}).
 \begin{itemize}
   \item [C.1] Suppose $f(Ax^* - y)< \varepsilon$. Then $x^*$ is the unique minimizer of  (\ref{eqn:P_III02}) if and only if   $\{ v \in \mathbb R^N \, | \, C_{\alpha\bullet} v = 0,  \, W v = 0 \}=\{ 0 \}$ and  there exist $z\in \mathbb R^{|\alpha|}_{++}$ and $z' \in \mathbb R^{|\Ical|}$ with $0< z' <\mathbf 1$ and $\mathbf 1^T z'=1$ such that $C^T_{\alpha\bullet} z = W^T z'$.

   \item [C.2] Suppose $f(Ax^* - y)=\varepsilon$. Then $x^*$ is the unique minimizer of  (\ref{eqn:P_III02}) if and only if the following hold:
     \begin{itemize}
          \item [(2.i)] $\{ v \in \mathbb R^{N} \, | \, A v =0, \ C_{\alpha \bullet} v = 0, \ W \, v = 0 \} = \{ 0 \}$;
          \item [(2.ii)] There exist $z \in \mathbb R^m$, $z' \in \mathbb R^{|\alpha|}_{++}$, and $z'' \in \mathbb R^{|\Ical|}$ with $0< z'' <\mathbf 1$ and $\mathbf 1^T z''=1$ such that
     $
         A^T z - C^T_{\alpha\bullet} z' + W^T z'' \, = \, 0;
     $
          \item [(2.iii)] If $\mathcal K:=\{ v \in \mathbb R^N \, | \, \big(\nabla f(A x^*-y) \big)^T A v <0, \ C_{\alpha\bullet} v \ge 0 \}$ is nonempty, then there exist $w \in \mathbb R^{|\alpha|}_+$              and $w' \in \mathbb R^{|\Ical|}_+$ such that
     $
        A^T \nabla f(A x^*-y)- C^T_{\alpha\bullet} w + W^T w' = 0.
     $
       \end{itemize}

 \end{itemize}
\end{theorem}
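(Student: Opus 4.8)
The plan is to dispatch the two cases separately, reducing each to a situation already handled in the paper.

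\textbf{Case C.1.} Since $f(Ax^*-y)<\varepsilon$ and $x\mapsto f(Ax-y)$ is continuous, the constraint $f(Ax-y)\le\varepsilon$ is slack on a ball around $x^*$, so there the feasible set of (\ref{eqn:P_III02}) coincides with $\mathcal P=\{x\mid Cx\ge d\}$; because (\ref{eqn:P_III02}) is convex, $x^*$ is its unique global minimizer iff it is its unique local minimizer, iff $x^*$ is the unique minimizer of $g$ over $\mathcal P$. Replacing $g$ near $x^*$ by $g(x^*)+\max_{i\in\Ical}p_i^T(x-x^*)$ and $\mathcal P$ near $x^*$ by $\{x\mid C_{\alpha\bullet}(x-x^*)\ge 0\}$, and using positive homogeneity (the same local argument as in the proof of Theorem~\ref{thm:unique_optimal_P_I02}, now without the equality block), this is equivalent to $\{v\mid C_{\alpha\bullet}v\ge 0,\ Wv\le 0\}=\{0\}$. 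Applying Lemma~\ref{lem:dual_lin_constraint} with the equality block taken to be a zero row and $H=\begin{bmatrix} C_{\alpha\bullet}\\-W\end{bmatrix}$, this in turn is equivalent to $\{v\mid C_{\alpha\bullet}v=0,\ Wv=0\}=\{0\}$ together with the existence of $z>0$, $z''>0$ with $C_{\alpha\bullet}^Tz=W^Tz''$; dividing $z''$ and $z$ by $\mathbf 1^Tz''$ gives the asserted simplex-normalized form.

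\textbf{Case C.2.} Here $f(Ax^*-y)=\varepsilon$, and two observations drive the argument. First, since $\{x\mid Ax=Ax^*\}\subseteq\{x\mid f(Ax-y)=\varepsilon\}$, the feasible set of the basis-pursuit-type problem $(\star)$: $\min g(x)$ subject to $Ax=Ax^*$, $Cx\ge d$, is contained in that of (\ref{eqn:P_III02}), and $x^*$ is feasible for $(\star)$. Second, for any $x$ feasible for (\ref{eqn:P_III02}), convexity of $f$ gives $\varepsilon\ge f(Ax-y)\ge\varepsilon+\nabla f(Ax^*-y)^TA(x-x^*)$, so $v:=x-x^*$ obeys $\nabla f(Ax^*-y)^TAv\le 0$ and (from $Cx\ge d$) $C_{\alpha\bullet}v\ge 0$; moreover if $Av\ne 0$ then strict convexity of $f$ forces $\nabla f(Ax^*-y)^TAv<0$ (otherwise $f(Ax-y)>\varepsilon$), i.e.\ $v\in\mathcal K$.

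For necessity, if $x^*$ is the unique minimizer of (\ref{eqn:P_III02}) then — the feasible set of $(\star)$ being inside it and sharing the optimal value $g(x^*)$ — $x^*$ is the unique minimizer of $(\star)$, so Theorem~\ref{thm:unique_optimal_P_I02} applied to $(\star)$ yields exactly (2.i) and (2.ii). For (2.iii), if $\mathcal K\ne\emptyset$ then $\{v\mid\nabla f(Ax^*-y)^TAv<0,\ C_{\alpha\bullet}v\ge 0,\ Wv\le 0\}=\emptyset$, since any $v$ in it would make $x^*+tv$ feasible with $g(x^*+tv)\le g(x^*)$ for small $t>0$, a second minimizer; a Motzkin-type theorem of the alternative then produces $w\ge 0$, $w'\ge 0$ with $A^T\nabla f(Ax^*-y)-C_{\alpha\bullet}^Tw+W^Tw'=0$. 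For sufficiency, assume (2.i)--(2.iii); by Theorem~\ref{thm:unique_optimal_P_I02}, $x^*$ is the unique minimizer of $(\star)$. Let $x\ne x^*$ be feasible for (\ref{eqn:P_III02}) and $v=x-x^*$. If $Ax=Ax^*$, then $x$ is feasible for $(\star)$, so $g(x)>g(x^*)$. If $Ax\ne Ax^*$, the second observation gives $v\in\mathcal K$, so (2.iii) applies; taking the inner product of its identity with $v$ gives $(Wv)^Tw'=(C_{\alpha\bullet}v)^Tw-\nabla f(Ax^*-y)^TAv>0$, so some coordinate of $Wv$ is positive, and since every $p_i$ ($i\in\Ical$) lies in $\partial g(x^*)$, the subgradient inequality gives $g(x)-g(x^*)\ge\max_{i\in\Ical}p_i^Tv=\max_k(Wv)_k>0$. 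In all cases $g(x)>g(x^*)$, so $x^*$ is the unique minimizer.

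The main obstacle is the sufficiency direction of C.2: the competing feasible point $x$ need not be near $x^*$, so the local piecewise-linear expansion of $g$ must be replaced by the global subgradient bound $g(x)\ge g(x^*)+p_i^T(x-x^*)$, and positivity of a coordinate of $W(x-x^*)$ must be squeezed out of the dual identity in (2.iii) — precisely where strict convexity of $f$ enters, via the implication $Ax\ne Ax^*\Rightarrow\nabla f(Ax^*-y)^TA(x-x^*)<0$. The degenerate case $\nabla f(Ax^*-y)=0$ is not exceptional: then $Ax^*-y$ globally minimizes $f$, feasibility forces $Ax=Ax^*$, $\mathcal K=\emptyset$, and (\ref{eqn:P_III02}) reduces to $(\star)$, to which Theorem~\ref{thm:unique_optimal_P_I02} applies directly.
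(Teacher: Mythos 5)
Your proof is correct, and for case C.2 it takes a genuinely different route from the paper. For C.1 you do exactly what the paper does (freeze the slack constraint $f(Ax-y)\le\varepsilon$ locally and apply the BP-type result with the equality block removed), and your simplex normalization of the multipliers is the same (ii)$\Leftrightarrow$(iii) manipulation used in Theorem~\ref{thm:unique_optimal_P_I02}. For C.2 the paper works locally: it writes $f(Ax^*-y+Av)=\varepsilon+h^Tv+r(Av)$ with a remainder satisfying $r(Av)/\|Av\|\to 0$, reduces to a positively homogeneous problem in $v$, proves an intermediate primal characterization ((i'): uniqueness of $0$ for the BP-type subproblem, (ii'): $\wt g>0$ on $\mathcal K$) via homogeneity and the limit estimate, and only then converts (i') by Theorem~\ref{thm:unique_optimal_P_I02} and (ii') by Motzkin. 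You bypass the remainder analysis entirely: for necessity you observe that the slice $\{x \mid Ax=Ax^*\}$ lies inside the feasible set (so uniqueness passes down to the auxiliary BP problem, giving (2.i)--(2.ii)), and you obtain (2.iii) from the small-$t$ perturbation $x^*+tv$ plus Motzkin, essentially as the paper does; for sufficiency you argue globally, using the strict first-order inequality of a strictly convex $f$ to place any competitor with $Ax\ne Ax^*$ into $\mathcal K$, and then pairing the dual identity in (2.iii) with the global subgradient bound $g(x)\ge g(x^*)+p_i^T(x-x^*)$, $i\in\Ical$, to force $g(x)>g(x^*)$. What each approach buys: the paper's local reduction produces the primal conditions (i')--(ii') as a byproduct (useful for geometric interpretation and reused elsewhere), while yours avoids the $o(\|Av\|)$ limit arguments, treats far-away competitors directly through the subgradient inequality rather than a neighborhood expansion of $g$, and makes the precise role of strict convexity (the implication $Ax\ne Ax^*\Rightarrow h^T(x-x^*)<0$) explicit; your closing remark on $\nabla f(Ax^*-y)=0$ is also a correct clarification rather than a needed repair.
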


\begin{remark} \label{remark:BPDI_I} \rm
We give several remarks on the conditions in Theorem~\ref{thm:unique_optimal_P_III02} before presenting its proof.
\begin{itemize}
 \item [(a)] Note that in C.2, if the cone $\mathcal K$ defined in condition (2.iii) is empty, then $x^*$ is the unique minimizer if and only if conditions (2.i)-(2.ii) hold;
 \item [(b)] The cone $\mathcal K$ is nonempty if and only if there is no $u \ge 0$ such that $A^T \nabla f(A x^*-y) = C^T_{\alpha\bullet} u$. Geometrically, it means that $A^T \nabla f(A x^*-y)$ is not in the dual cone of $\{ v \, | \, C_{\alpha\bullet} v \ge 0 \}$, which equals the normal cone of the polyhedron $\mathcal P:=\{ x \, | \, C x \ge d \}$ at $x^*$. This condition provides a constraint qualification for the optimality condition shown in (2.iii);
 \item [(c)] In view of remark (b), we see that if $\mathcal K$ is nonempty, then a nonnegative $w'$ given in condition (2.iii) must be nonzero. Hence,
condition (2.iii) can be equivalently written as: if $\mathcal K$ is nonempty, then there exist a positive real number $\theta$, $\wt w \in \mathbb R^{|\alpha|}_+$, and $\wt w' \in \mathbb R^{|\Ical|}$ with $0\le \wt w' \le \mathbf 1$ and $\mathbf 1^T \wt w'=1$  such that
     $
       \theta \cdot A^T \nabla f(A x^*-y)- C^T_{\alpha\bullet} \wt w + W^T \wt w' = 0.
     $
  \end{itemize}
\end{remark}

\begin{proof}
The proof is divided into the following two parts:

{\em Case C.1}: $f(Ax^* - y)< \varepsilon$. Due to the continuity of $f$, it is clear that $x^*$ is a unique minimizer of (\ref{eqn:P_III02}) if and only if it is a  unique (local) minimizer of the following problem on a small neighborhood of $x^*$:
 \[
  \min_{x \in \mathbb R^N} g(x), \quad \mbox{ subject to } \quad C x \ge d.
 \]
 By applying Theorem~\ref{thm:unique_optimal_P_I02} with $A=0$ and $y=0$ to the above problem, we obtain the desired result.

\gap

 {\em Case C.2}: $f(Ax^* - y)=\varepsilon$. Define the function $r(Av):=f(A x^* - y + A v) - f(A x^*-y) - \big(\nabla f(A x^*-y) \big)^T A v$ for $v\in \mathbb R^N$. Since $f$ is strictly convex, we see that $r(A v) \ge 0$ for all $v$, and $r(A v)=0$ if and only if $Av =0$. Furthermore, since $f$ is $C^1$, we have
 \begin{equation} \label{eqn:r(Av)02}
    \lim_{ 0\ne Av \rightarrow 0} \frac{r(Av)}{\| A v\|} =0.
 \end{equation}
 For notational simplicity, we define $h:= A^T \nabla f(A x^*-y) \in \mathbb R^N$.  Note that if $Av =0$, so is $h^T v$.

Define the positively homogeneous function $\wt g(v) \, := \, \max_{i \in \mathcal I} \, p^T_i v$.
 By virtue of the piecewise linear structure of $g(x)-g(x^*)$ for all $x$ sufficiently close to $x^*$, $x^*$ is the unique minimizer of (\ref{eqn:P_III02}) if and only if $v^*=0$ is a  unique local minimizer of the following problem:
\begin{equation} \label{eqn:local_P_III02}
 \quad \min_{  v \in \mathbb{R}^N} \ \wt g(v) \ \ 
%
%
\quad \text{subject to} \quad    h^T v + r(A v) \le 0, \quad   C_{\alpha \bullet}  v \ge   0.
\end{equation}
We claim that $v^*=0$ is the unique local minimizer of (\ref{eqn:local_P_III02}) if and only if the following hold:
\begin{itemize}
    \item [(i')] $u^*=0$ is the unique minimizer of the problem
      \[
         \min_{  u \in \mathbb{R}^N} \  \wt g(u)
\quad \text{subject to} \quad    A u = 0, \quad   C_{\alpha \bullet}  u \ge   0; \quad \mbox{ and }
      \]
    \item [(ii')] If the cone $\mathcal K:=\{ u \, | \, h^T u <0, \ C_{\alpha\bullet} u \ge 0 \}$ is nonempty, then $\wt g(u)>0$ for all $u \in \mathcal K$.
        %
  \end{itemize}
To show this claim, we first prove the ``if'' part.  Let $\mathcal U$ be a neighborhood of $v^*=0$ such that $\wt g(v)=g(x^*+v)-g(x^*)$ and $C_{\alpha^c \bullet} (x^*+v) > d_{\alpha^c}$ for all $ v \in \mathcal U$. For any $0 \ne v \in \mathcal U$ with $h^T v + r(A v) \le 0$ and  $C_{\alpha \bullet}  v \ge   0$, we consider two cases: $A v =0$, and $A v \ne 0$. For the former case, we have $h^T v+ r(A v)=0$. By condition (i'), we have $\wt g(v)> \wt g(0)=0$. For the latter case, since $A v \ne 0$, we have $r(Av)>0$ so that $h^T v <0$. By condition (ii'), we also have $\wt g(v)>0$. Therefore, $v^*=0$ is the unique local minimizer of (\ref{eqn:local_P_III02}).
We next prove the ``only if'' part.
Suppose $v^*=0$ is the unique local minimizer of (\ref{eqn:local_P_III02}). For any $u \ne 0$ with $A u =0$ and $C_{\alpha\bullet} u \ge 0$, we have that for all sufficiently small $\beta>0$, $h^T \beta u + r(\beta Au)=0$ such that $\beta u$ is a nonzero local  feasible point of (\ref{eqn:local_P_III02}). This implies that $\wt g(\beta  u)>0$. By the positive homogeneity of $\wt g$, we see that $\wt g(u)>0$ for all $u \ne 0$ with $A u =0$ and $C_{\alpha\bullet} u \ge 0$. This leads to condition (i'). Furthermore, for any $u \in \mathcal K$, we deduce via $h^T u <0$ that $u \ne 0$ and $Au \ne 0$ (recalling that $[Au =0] \Rightarrow [h^T u=0]$).
It follows from (\ref{eqn:r(Av)02}) that for all sufficiently small $\beta>0$,
\[
   \frac{h^T \beta u + r(A \beta  u)}{ \| \beta u \|} = \frac{h^T u}{\| u \|} +  \frac{r(\beta A  u)}{\|\beta A u \|} \cdot \frac{\| A u\|}{\| u \|}<0.
\]
Therefore, $h(\beta u) + r(A \beta u) <0$ for all small $\beta>0$. Hence, $\beta u$ is a nonzero local  feasible point of (\ref{eqn:local_P_III02}) so that $\wt g(\beta u)>0$. We thus obtain condition (ii') via the positive homogeneity of $\wt g$ again. This completes the proof of the claim.

We finally show that conditions (i') and (ii') are equivalent to conditions (2.i), (2.ii), and (2.iii) stated in the theorem. Clearly, in light of Theorem~\ref{thm:unique_optimal_P_I02}, condition (i') is equivalent to conditions (2.i)-(2.ii). Moreover, when $\mathcal K$ is nonempty, condition (ii') is equivalent to the inconsistency of the following inequality system in $u$:
\[
   h^T u <0, \quad C_{\alpha\bullet} u \ge 0, \quad \max_{i\in \Ical} p^T_i u \le 0.
\]
%
%
In view of the expression of the matrix $W$ in (\ref{eqn:b_Q}), the above inequality system is equivalent to the following linear inequality system:
%
%
\[
 \mbox{(I)}: \quad h^T u <0, \quad C_{\alpha\bullet} u \ge 0, \quad W u \le 0.
%
%
\]
By the Motzkin's Transposition Theorem, system (I) has no solution if and only if there exists $z=(z_1, z_2, z_3)$ with $0<z_1 \in \mathbb R$ and $(z_2, z_3) \ge 0$  such that $-z_1 \cdot h + C^T_{\alpha\bullet} z_2 - W^T z_3 =0$. The latter condition is equivalent to the existence of $(w, w') \ge 0$ such that $h - C^T_{\alpha\bullet} w + W^T w' = 0$. This shows the equivalence of conditions (ii') and (2.iii).
\end{proof}

%
\subsection{Unique Optimal Solution to the Basis Pursuit Denoising II-like Problem} \label{subsect:C_BPDN_II_unique}


Let $f:\mathbb R^m \rightarrow \mathbb R$ be a $C^1$ strictly convex function. For each $i=1, \ldots, r$, $g_i:\mathbb R^N \rightarrow \mathbb R$ is a convex PA function whose max-formulation is
$g_i(x)=\max_{s=1, \ldots, \ell_i} (p^T_{i, s} x + \gamma_{i, s})$, where each $(p_{i,s}, \gamma_{i, s})\in \mathbb R^N \times \mathbb R$.
%
%
%
  Consider the following convex optimization problem motivated by the constrained BPDN-II problem:
\begin{equation} \label{eqn:P_IV_multiple02}  
\min_{  x \in \mathbb{R}^N} \
 f(Ax-y) \ \
\quad \text{subject to}
\quad   g_1(x)\le \eta_1, \ \ \ldots, \ \ g_r(x) \le \eta_r, \ \mbox{ and } \ \ C  x \ge   d,
\end{equation}
where $\eta_1, \ldots, \eta_r$ are real numbers.
%
%
We assume that this problem has an optimal solution.
This optimization problem allows multiple convex PA function defined constraints, which appear in applications, e.g., the sparse fused LASSO \cite{TibhsiraniSRZK_JRSS05}; see Section~\ref{subsect:fused_Lasso} for details.

A problem similar to (\ref{eqn:P_IV_multiple02}) is treated in \cite{ZhangYC_JOTA15} with one inequality constraint $\| x\|_1\le \eta_1$  but without the polyhedral constraint $C x \ge d$. Under a restrictive assumption on $\eta_1$, it is shown in \cite{ZhangYC_JOTA15} that its solution uniqueness is reduced to that of a related basis pursuit problem. However, this reduction fails for (\ref{eqn:P_IV_multiple02}) due to the presence of the general polyhedral constraint; see Section~\ref{subsect:BPDN_II_recovery_comp} for more elaboration.

We introduce more notation first. For a given feasible point $x^* \in \mathbb R^N$, define the index set $\Jcal:=\{ i\in \{1, \ldots, r\} \, | \, g_i(x^*)=\eta_i \}$, which corresponds to the active constraints defined by $g_i$'s at $x^*$. For each $i\in \mathcal J$, define the index set $\mathcal I_i:=\{ s \in\{1, \ldots, \ell_i\} \, | \, p^T_{i, s} x^* + \gamma_{i, s}=g_i(x^*)\}$, and the matrix
\begin{equation} \label{eqn:W_i_BPDN_II}
  W_i \,:= \, \begin{bmatrix} p^T_{i, s_1} \\  \vdots \\ p^T_{i, s_{|\mathcal I_i|}} \end{bmatrix}_{s_k \in \Ical_i} \in \mathbb R^{|\Ical_i|\times N}.
\end{equation}
%
%

\begin{theorem} \label{thm:unique_optimal_P_IV02}
 Let $x^* \in \mathbb R^N$ be a feasible point of the problem (\ref{eqn:P_IV_multiple02}).
 Then $x^*$ is the unique minimizer of  (\ref{eqn:P_IV_multiple02}) if and only if the following hold:
 \begin{itemize}
   \item [(i)]
          $\{ v \in \mathbb R^{N} \, | \, A v =0, \ C_{\alpha \bullet} v = 0, \ W_i v = 0, \, \forall \, i\in \mathcal J \} = \{ 0 \}$;

   \item [(ii)]  There exist $w \in \mathbb R^m$, $w' \in \mathbb R^{|\alpha|}_{++}$, and $w''_i \in \mathbb R^{|\Ical_i|}$ with $0<w''_i<\mathbf 1$ and $\mathbf 1^T w''_i=1$ for each $i \in \Jcal$ such that
     $
         A^T w - C^T_{\alpha\bullet} w' + \sum_{i \in \Jcal} W^T_i w''_i \, = \, 0;
     $
     \item [(iii)]
   There exist $\wt z \in \mathbb R^{|\alpha|}_+$ and $\wt z'_i \in \mathbb R^{|\Ical_i|}_+$ for each $i\in \mathcal J$ such that
         $
          A^T \nabla f(A x^*-y) - C^T_{\alpha\bullet} \wt z  +  \sum_{i \in \mathcal J} W^T_i \wt z'_i   =0.
         $

 Moreover, condition (iii) is equivalent to the following condition:
   \item [(iv)] There exist $z \in\mathbb R^{|\alpha|}_+$, $\theta_i \in \mathbb R_+$, $z'_i \in \mathbb R^{|\Ical_i|}$ with $0 \le z'_i \le \mathbf 1$ and $\mathbf 1^T z'_i=1$ for each $i\in \Jcal$ such that
 \[
   A^T \nabla f(A x^*-y) - C^T_{\alpha\bullet} z  +  \sum_{i \in \mathcal J} \theta_i \cdot W^T_i z'_i \,  = \, 0.
 \]
 \end{itemize}
\end{theorem}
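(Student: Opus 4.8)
The plan is to combine the reduction principle of Proposition~\ref{prop:equivalence_LASSO} (applied with a vanishing ``$h$'') with the polyhedral subdifferential calculus and with Lemma~\ref{lem:dual_lin_constraint}, splitting the uniqueness assertion into a \emph{minimality} part, governed by conditions (iii)--(iv), and a \emph{local rigidity} part, governed by conditions (i)--(ii). Throughout, write $h:=A^T\nabla f(Ax^*-y)$ and $\mathcal C:=\{x\in\mathbb R^N \mid g_i(x)\le\eta_i,\ i=1,\dots,r,\ Cx\ge d\}$. Since each $g_i$ is a convex PA function, Lemma~\ref{lem:convex_PA_func}(i) (equivalently, its max-formulation) shows that each sublevel set $\{x\mid g_i(x)\le\eta_i\}$ is a polyhedron, hence $\mathcal C$ is a polyhedron and all the normal-cone computations below are classical and require no constraint qualification.

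First I would determine when $x^*$ is a minimizer of~(\ref{eqn:P_IV_multiple02}) at all. Since $f(A\cdot-y)$ is a real-valued convex function and $\mathcal C$ is polyhedral, $x^*$ is a minimizer if and only if $0\in\{h\}+\mathcal N_{\mathcal C}(x^*)$. Writing $\mathcal C$ as the intersection of the polyhedra $\{g_i\le\eta_i\}$ and $\mathcal P=\{Cx\ge d\}$ and adding the corresponding normal cones, and using that the normal cone of $\{g_i\le\eta_i\}$ at $x^*$ is $\{0\}$ for $i\notin\mathcal J$ and equals $\mbox{cone}(\partial g_i(x^*))=\{W_i^T\lambda_i\mid \lambda_i\ge 0\}$ for $i\in\mathcal J$ (by the discussion preceding Lemma~\ref{lem:convex_PA_func}, together with~(\ref{eqn:subdiff_g}) and the definition of $W_i$), while $\mathcal N_{\mathcal P}(x^*)=\{-C_{\alpha\bullet}^T\nu\mid \nu\ge 0\}$, one obtains that $x^*$ is a minimizer of~(\ref{eqn:P_IV_multiple02}) if and only if condition (iii) holds. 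The equivalence of (iii) and (iv) is then a routine rescaling mirroring the one for Theorem~\ref{thm:unique_optimal_P_I02}(ii)--(iii): given (iii), set $\theta_i:=\mathbf 1^T\widetilde z'_i$ and $z'_i:=\widetilde z'_i/\theta_i$ whenever $\theta_i>0$ (and take $\theta_i=0$ with an arbitrary simplex vector $z'_i$ otherwise); conversely, put $\widetilde z'_i:=\theta_i z'_i$.

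Now assume $x^*$ is a minimizer of~(\ref{eqn:P_IV_multiple02}); if it is not, then (iii) fails and neither side of the asserted equivalence can hold, so this is the only case that needs work. Applying Proposition~\ref{prop:equivalence_LASSO} with the convex set $\mathcal C$ above and the trivial convex function $h\equiv 0$, we see that $x^*$ is the unique minimizer of~(\ref{eqn:P_IV_multiple02}) if and only if $x^*$ is the unique minimizer of $\min_{x} 0$ subject to $Ax=Ax^*$ and $x\in\mathcal C$, that is, if and only if $\{x\mid Ax=Ax^*,\ g_i(x)\le\eta_i\ \forall i,\ Cx\ge d\}=\{x^*\}$. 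This set is a polyhedron containing $x^*$, so by convexity it reduces to $\{x^*\}$ precisely when its tangent cone at $x^*$ is trivial; using the local affine description $g_i(x^*+v)=\eta_i+\max_{s\in\mathcal I_i}p_{i,s}^Tv$, valid for $i\in\mathcal J$ and all small $v$ (and the fact that the constraints with $i\notin\mathcal J$ and the rows outside $\alpha$ stay inactive near $x^*$), that tangent cone is exactly $\{v\mid Av=0,\ C_{\alpha\bullet}v\ge 0,\ W_iv\le 0\ \forall i\in\mathcal J\}$. Stacking $C_{\alpha\bullet}$ and $-W_{i_1},\dots,-W_{i_{|\mathcal J|}}$ into a single matrix $H$ so that $Hv\ge 0$ encodes $C_{\alpha\bullet}v\ge 0$ together with $W_iv\le 0$, Lemma~\ref{lem:dual_lin_constraint} shows that triviality of this cone is equivalent to condition (i) together with the existence of $w\in\mathbb R^m$, $w'>0$ and $\widetilde w_i>0$ with $A^Tw-C_{\alpha\bullet}^Tw'+\sum_{i\in\mathcal J}W_i^T\widetilde w_i=0$; a rescaling of this dual certificate, as for Theorem~\ref{thm:unique_optimal_P_I02}, brings it into the stated form of condition (ii). Combining the two parts yields: $x^*$ is the unique minimizer $\Leftrightarrow$ (iii) $\wedge$ (i) $\wedge$ (ii) $\Leftrightarrow$ (i) $\wedge$ (ii) $\wedge$ (iv).

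The step I expect to demand the most care is the reduction of ``$\{x\mid Ax=Ax^*,\ g_i(x)\le\eta_i\ \forall i,\ Cx\ge d\}=\{x^*\}$'' to the pointed-cone condition: one must verify that the first-order local picture of the feasible set is faithful in both directions --- a nonzero tangent direction must genuinely generate a second feasible point (so that local non-rigidity propagates globally, which is where convexity/polyhedrality of $\mathcal C$ is essential and where a naive argument could miss non-polyhedral subtleties), and, conversely, any second feasible point must yield a nonzero tangent direction. A secondary bookkeeping point is to carry out the normalizations in condition (ii) and in the equivalence (iii)$\Leftrightarrow$(iv) consistently across the several index sets $\mathcal I_i$, $i\in\mathcal J$, rather than coordinate by coordinate.
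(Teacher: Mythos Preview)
Your proposal is correct and follows essentially the same route as the paper: establish (iii) (and its equivalence with (iv)) as the minimality condition via normal-cone/KKT calculus on the polyhedral feasible set, then invoke Proposition~\ref{prop:equivalence_LASSO} with $h\equiv 0$ to reduce uniqueness of the minimizer to the triviality of the cone $\{v\mid Av=0,\ C_{\alpha\bullet}v\ge 0,\ W_iv\le 0,\ \forall i\in\mathcal J\}$, and finally apply Lemma~\ref{lem:dual_lin_constraint} with the stacked matrix $H$ to obtain (i)--(ii). The only cosmetic difference is that the paper first localizes to a neighborhood $\mathcal U$ of $x^*$ (replacing $g_i$ by $\widetilde g_i(v)=\max_{s\in\mathcal I_i}p_{i,s}^Tv$) before applying Proposition~\ref{prop:equivalence_LASSO}, whereas you apply the proposition globally and then localize via the tangent-cone-of-a-polyhedron argument; both orderings are valid and lead to the same cone condition.
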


\begin{proof}
 We first show that $x^*$ is a minimizer of the problem (\ref{eqn:P_IV_multiple02}) if and only if condition (iii) holds. Note that for each $g_i(x)=\max_{s=1, \ldots, \ell_i} (p^T_{i, s} x + \gamma_{i, s})$, the constraint $g_i(x) \le \eta_i$ is equivalent to the linear inequality constraint $p^T_{i, s} x \le \eta_i-\gamma_{i, s}$ for all $s$. Hence, the problem (\ref{eqn:P_IV_multiple02}) has a polyhedral constraint. In view of the definitions of $C_{\alpha\bullet}$ and $W_i$ for each $i \in \Jcal$, it is easy to see, e.g., via \cite[Theorem 3.33]{Ruszczynski_book06}, that $x^*$ is a minimizer if and only if there exist $\wt z \in \mathbb R^{|\alpha|}_+$ and $\wt z'_i \in \mathbb R^{|\Ical_i|}_+$ for each $i\in \mathcal J$ such that $A^T \nabla f(A x^*-y) - C^T_{\alpha\bullet} \wt z  +  \sum_{i \in \mathcal J} W^T_i \wt z'_i   =0$, which  is condition (iii).
 To show the equivalence of conditions (iii) and (iv), we first observe that  (iv) implies (iii). Conversely, suppose (iii) holds. It suffices to show that for each $\wt z'_i \in \mathbb R^{|\Ical_i|}_+$, there exist $\theta_i \in \mathbb R_+$ and $z'_i \in \mathbb R^{|\Ical_i|}$ with $0 \le z'_i \le \mathbf 1$ and $\mathbf 1^T z'_i=1$ such that $\wt z'_i=\theta_i \cdot z'_i$. This result is trivial when $\wt z'_i=0$.
%
%
 When $0\ne \wt z'_i \ge 0$,
%
%
  we choose $\theta_i:=\mathbf 1^T \wt z_i>0$ and $z_i :=\wt z'_i/\theta_i$, which leads to  the desired result.

 Suppose $x^*$ is a minimizer of the problem (\ref{eqn:P_IV_multiple02}) or equivalently $x^*$ satisfies condition (iii).
 For each $i\in \Jcal$, let $\wt g_i(v):= \max_{s\in \mathcal I_i} p^T_{i, s} v $. Let $\mathcal U$ be a convex neighborhood of $x^*$ such that  for all $x \in \mathcal U$, $g_i(x)-g_i(x^*)=\wt g_i(x-x^*)$ for each $i\in \Jcal$, $g_i(x)<\eta_i$ for each $i \in \Jcal^c$, and $C_{\alpha^c \bullet} x > d_{\alpha^c}$.  Then $x^*$ is the unique minimizer of (\ref{eqn:P_IV_multiple02}) if and only if it is a unique local minimizer of the following problem on $\mathcal U$:
 \[
  \min_{x \in \mathbb R^N} \ f(A x - y) + h(x), \quad \mbox{ subject to } \quad x \in \mathcal U, \quad \wt g_i(x-x^*) \le 0, \forall \, i \in \Jcal, \ \ \mbox{ and} \ \ C_{\alpha\bullet} (x - x^*) \ge 0,
 \]
where $h$ is the zero function, and $\wt g_i(x-x^*)$ is convex in $x$ for each $i \in \Jcal$. Applying Proposition~\ref{prop:equivalence_LASSO} to the above problem with the convex set $\mathcal C:= \mathcal U \cap \{ x \, | \, C_{\alpha\bullet} (x - x^*) \ge 0, \ \wt g_i(x-x^*) \le 0, \, \forall \, i \in \Jcal \}$, we see that $x^*$ is the unique minimizer of (\ref{eqn:P_IV_multiple02}) if and only if it is the unique minimizer of the following problem:
\[
 \min_{x \in \mathbb R^N} h(x), \quad \mbox{ subject to } \quad A x = A x^*, \ x \in \mathcal U, \ \wt g_i(x-x^*) \le 0,  \forall \, i \in \Jcal, \ \ \mbox{ and} \ \ C_{\alpha\bullet} (x - x^*) \ge 0,
\]
It is equivalent to the equation $\{ v \, | \, A v=0, \  C_{\alpha\bullet} v \ge 0, \ \wt g_i(v) \le 0, \, \forall \, i \in \Jcal \} =\{ 0 \}$ in view of the positive homogeneity of $\wt g_i$.  Since $\wt g_i(v) \le 0$ is equivalent to $W_i v \le 0$ where $W_i$ is defined in (\ref{eqn:W_i_BPDN_II}), this equation holds if and only if $\{ v \, | \, A v=0, \  C_{\alpha\bullet} v \ge 0, \ W_i v \le 0, \, \forall \, i \in \Jcal \} =\{ 0 \}$. Using Lemma~\ref{lem:dual_lin_constraint} with
 \[
    H = \begin{bmatrix} C_{\alpha\bullet} \\ -W_{i_1} \\ \vdots \\ -W_{i_{|\Jcal|}} \end{bmatrix}_{i_k \in \Jcal}
 \]
 and a similar argument for Theorem~\ref{thm:unique_optimal_P_I02}, we deduce that $\{ v \, | \, A v=0, \  C_{\alpha\bullet} v \ge 0, \ W_i v \le 0, \, \forall \, i \in \Jcal \} =\{ 0 \}$ holds if and only if conditions (i) and (ii) hold. This completes the proof.
\end{proof}

%
%


%
\subsection{Extensions to Convex PA Loss Functions} \label{subsect:PA_loss_unique}

In this subsection, we extend the results in Sections~\ref{subsect:C_LASSO_unique}-\ref{subsect:C_BPDN_II_unique} to a convex PA loss function $f:\mathbb R^m \rightarrow \mathbb R$. It follows from (ii) of Lemma~\ref{lem:convex_PA_func} that $f(Ax-y)$ is a convex PA function on $\mathbb R^N$.  By virtue of this property, we show below that under this $f$ and a convex PA function $g$, each of the  LASSO-like problem (\ref{eqn:P_II02}), the BPDN-I-like problem (\ref{eqn:P_III02}), and the  BPDN-II-like problem (\ref{eqn:P_IV_multiple02}) can be formulated as the BP-like problem (\ref{eqn:P_I02}) with a new convex PA objective function or suitable polyhedral constraints.


 (a) The LASSO-like problem (\ref{eqn:P_II02}). Define $g_\star(x):=f(A x - y) + g(x)$. Since both $f(Ax-y)$ and $g(x)$ are convex PA functions on $\mathbb R^N$, so is $g_\star$ in view of (iii) of Lemma~\ref{lem:convex_PA_func}. This leads to the BP-like problem (\ref{eqn:P_I02}) with the objective function $g_\star$ and without the equality constraint.


 (b) The BPDN-I-like problem (\ref{eqn:P_III02}). Since $f(Ax-y)$ is a convex PA function on $\mathbb R^N$,  the constraint set $\{ x \, | \, f(Ax-y) \le \varepsilon \}$ is  polyhedral as shown in the proof of Theorem~\ref{thm:unique_optimal_P_IV02}. Hence, the problem (\ref{eqn:P_III02}) can be formulated as the BP-like problem (\ref{eqn:P_I02}) with a new polyhedral constraint.


(c) The BPDN-II-like problem (\ref{eqn:P_IV_multiple02}). Based on the argument for the above two cases, the problem (\ref{eqn:P_IV_multiple02}) is also transferred to the BP-like problem (\ref{eqn:P_I02}).

Consequently, when $f$ is a convex PA function, the solution uniqueness of the above three problems can be determined via Theorem~\ref{thm:unique_optimal_P_I02} for a  given $x^*$.

As an example, we consider the Dantzig selector which has gained tremendous interest in high-dimensional statistics \cite{CandesTao_AoS07}: $\min_{x \in \mathbb R^N} \| x \|_1$ subject to $\| A^T (A x-y) \|_\infty\le \varepsilon$. Let $g(x):=\|x \|_1, \forall \, x \in \mathbb R^N$, and $f(z):=\| A^T z\|_\infty, \forall \, z \in \mathbb R^m$, which are both convex PA functions. Hence, the Dantzig selector can be treated as the
BPDN-I-like problem (\ref{eqn:P_III02}) with the convex PA loss function $f$ and the objective function $g$.


%
\section{Solution Existence and Uniqueness of $\ell_1$-norm based Constrained  Optimization Problems} \label{sect:Applications}

%
%
Since the $\ell_1$-norm is a sign-symmetric polyhedral gauge and thus a convex PL function, we apply the general results developed in Section~\ref{sect:general_results}  to establish solution uniqueness conditions for several important and representative $\ell_1$ minimization problems, possibly subject to linear inequality constraints.

%
\subsection{Solution Existence of $\ell_1$-norm based Constrained Optimization Problems} \label{subsect:solution_existence}

 Solution existence is a fundamental issue for $\ell_1$-norm based optimization problems. For the BP-like problem, it depends on the convex PA function $g$, whereas for the LASSO-like and two BPDN-like problems, it depends on the function $f$ additionally. In this subsection, we first establish some general solution existence results, and then apply them to several problems of interest with $g(x)=\| E x \|_1$ and $f(\cdot)=\|\cdot \|^s$, which find various applications in  $\ell_1$ minimization. We start from certain preliminary results.

%
%

\begin{lemma} \label{lem:solution_existence}
 Let $J:\mathbb R^\ell \rightarrow \mathbb R$ be a coercive and lower semi-continuous function that is bounded below, i.e., $\inf_{u\in \mathbb R^\ell} J(u)>-\infty$. Let a matrix $H\in \mathbb R^{\ell\times N}$ and a set $\mathcal C\subseteq \mathbb R^N$ be such that $H \mathcal C$ is a closed set in $\mathbb R^\ell$. Then for any $u' \in \mathbb R^\ell$, the minimization problem $\inf_{x \in \mathcal C} J(H x + u')$ attains an optimal solution.
\end{lemma}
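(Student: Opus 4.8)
The plan is to reduce the problem to a standard coercivity-plus-lower-semicontinuity existence argument on the image set $H\mathcal C$. First I would observe that the map $u \mapsto J(u + u')$ is again coercive, lower semi-continuous, and bounded below (a translation of the argument does not affect any of these three properties). Set $\mathcal D := H\mathcal C + u' \subseteq \mathbb R^\ell$; since $H\mathcal C$ is closed by hypothesis and translation by the fixed vector $u'$ is a homeomorphism, $\mathcal D$ is a closed subset of $\mathbb R^\ell$. The problem $\inf_{x\in\mathcal C} J(Hx+u')$ then has the same infimum as $\inf_{w\in\mathcal D} J(w)$, and if the latter attains its infimum at some $w^*\in\mathcal D$, then writing $w^* = Hx^* + u'$ for some $x^*\in\mathcal C$ (which is possible precisely because $w^*\in H\mathcal C + u'$) produces an optimal solution $x^*$ of the original problem.

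Next I would carry out the existence argument for $\inf_{w\in\mathcal D} J(w)$ with $\mathcal D$ closed and $J$ coercive, lsc, bounded below. Let $\mu := \inf_{w\in\mathcal D} J(w)$, which is finite since $J$ is bounded below and $\mathcal D$ is nonempty (nonemptiness of $\mathcal C$, hence of $\mathcal D$, is implicit). Take a minimizing sequence $\{w_k\}\subseteq \mathcal D$ with $J(w_k)\to\mu$. Since $\{J(w_k)\}$ is bounded above (eventually below $\mu+1$, say) and $J$ is coercive — meaning $J(w)\to+\infty$ as $\|w\|\to\infty$ — the sequence $\{w_k\}$ is bounded in $\mathbb R^\ell$. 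By Bolzano--Weierstrass, extract a convergent subsequence $w_{k_j}\to w^*$; since $\mathcal D$ is closed, $w^*\in\mathcal D$. By lower semi-continuity of $J$, $J(w^*)\le \liminf_j J(w_{k_j}) = \mu$. Since $w^*\in\mathcal D$, also $J(w^*)\ge\mu$, so $J(w^*)=\mu$ and $w^*$ is a minimizer.

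Finally I would pull this back: choosing $x^*\in\mathcal C$ with $Hx^* + u' = w^*$ gives $J(Hx^*+u') = \mu = \inf_{x\in\mathcal C} J(Hx+u')$, so $x^*$ is an optimal solution of the stated problem.

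The only genuinely delicate point — and the place where the closedness hypothesis on $H\mathcal C$ is indispensable — is ensuring that the limit $w^*$ of the minimizing sequence actually lies in the feasible image set; without closedness of $H\mathcal C$ the reduction fails, since $J$ evaluated on $\mathbb R^\ell$ might attain its infimum over the closure of $\mathcal D$ at a point not of the form $Hx+u'$ with $x\in\mathcal C$. Everything else is routine: coercivity gives compactness of the minimizing sequence, lower semicontinuity upgrades the liminf inequality to attainment, and the translation by $u'$ is cosmetic. I do not expect to need any structure on $\mathcal C$ itself beyond what is forced by $H\mathcal C$ being closed.
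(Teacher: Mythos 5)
Your proposal is correct and follows essentially the same route as the paper: translate the image set to $\mathcal D = H\mathcal C + u'$, which is closed, and apply the standard coercivity/lower-semicontinuity existence argument there before pulling a minimizer back to some $x^*\in\mathcal C$. The paper simply cites this Weierstrass-type argument as standard, whereas you spell out the minimizing-sequence details; the substance is identical.
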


\begin{proof}
 Define the set $\mathcal W:=H\mathcal C + \{ u' \}$ in $\mathbb R^\ell$ for an arbitrary $u'$. Since  $H\mathcal C$ is closed, so is $\mathcal W$. Consider the optimization problem $(P): \inf_{u \in \mathcal W} J(u)$. Since $J$ is  coercive, lower semi-continuous, and bounded below and $\mathcal W$ is closed, it follows from a standard argument that $(P)$ has a minimizer $u_* \in \mathcal W$. Therefore, there exists $x_* \in \mathcal C$ such that $Hx_* + u' = u_*$. Clearly, $x_*$ is an optimal solution to the original problem.
\end{proof}

\begin{corollary} \label{coro:existence_solution_norm}
Let $A \in \mathbb R^{m\times N}$, $F \in \mathbb R^{p\times N}$, and $H:=\begin{bmatrix} A \\ F\end{bmatrix} \in \mathbb R^{(m+p)\times N}$. Let $J_1:\mathbb R^m\rightarrow \mathbb R$ and $J_2:\mathbb R^p \rightarrow \mathbb R$ be two coercive and lower semi-continuous functions that are bounded below. Suppose $\mathcal C \subseteq \mathbb R^N$ is such that $H\mathcal C$ is a closed set in $\mathbb R^{m+p}$.
Then  for any given $y \in \mathbb R^m$, the following problem attains a minimizer:
\begin{equation} \label{eqn:optimization}
      \inf_{x \in \mathcal C} \ \  J_1(A x - y) +  J_2(F x).
\end{equation}
%
%
%
\end{corollary}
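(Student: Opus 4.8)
The plan is to reduce Corollary~\ref{coro:existence_solution_norm} directly to Lemma~\ref{lem:solution_existence} by collapsing the two losses $J_1$ and $J_2$ into a single objective on the product space $\mathbb R^{m+p}$. I would define $J:\mathbb R^{m+p}\rightarrow\mathbb R$ by $J(u,w):=J_1(u)+J_2(w)$ for $u\in\mathbb R^m$ and $w\in\mathbb R^p$, and set $u':=\begin{pmatrix} -y \\ 0 \end{pmatrix}\in\mathbb R^{m+p}$. Since $Hx=\begin{pmatrix} Ax \\ Fx\end{pmatrix}$, we have $Hx+u'=\begin{pmatrix} Ax-y \\ Fx\end{pmatrix}$, hence $J(Hx+u')=J_1(Ax-y)+J_2(Fx)$, so the problem (\ref{eqn:optimization}) is literally $\inf_{x\in\mathcal C}J(Hx+u')$. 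With $\ell=m+p$, and with $H$, $\mathcal C$, $u'$ as above, the closedness hypothesis of Lemma~\ref{lem:solution_existence} (that $H\mathcal C$ be closed in $\mathbb R^{m+p}$) is exactly the standing assumption of the corollary, so it only remains to check that $J$ is coercive, lower semi-continuous, and bounded below.

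Lower semi-continuity and the lower bound are routine: $(u,w)\mapsto J_1(u)$ and $(u,w)\mapsto J_2(w)$ are lower semi-continuous on $\mathbb R^{m+p}$ as compositions of lower semi-continuous functions with continuous coordinate projections, a finite sum of lower semi-continuous functions is lower semi-continuous, and $\inf_{\mathbb R^{m+p}}J=\inf_{\mathbb R^m}J_1+\inf_{\mathbb R^p}J_2>-\infty$.

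The only step requiring a genuine (if small) argument is coercivity of the separable sum $J$, and the subtlety to watch is that $\|(u,w)\|\rightarrow\infty$ does \emph{not} force both $\|u\|\rightarrow\infty$ and $\|w\|\rightarrow\infty$; only one component need be large. The remedy is to absorb the other component into its global lower bound. Precisely, given $M\in\mathbb R$, coercivity of $J_1$ gives $R_1>0$ with $J_1(u)>M-\inf J_2$ whenever $\|u\|>R_1$, and coercivity of $J_2$ gives $R_2>0$ with $J_2(w)>M-\inf J_1$ whenever $\|w\|>R_2$. Taking $R:=\sqrt{R_1^2+R_2^2}$, any $(u,w)$ with $\|(u,w)\|>R$ satisfies $\|u\|>R_1$ or $\|w\|>R_2$; in the first case $J(u,w)=J_1(u)+J_2(w)>(M-\inf J_2)+\inf J_2=M$, and symmetrically in the second. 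Hence $J$ is coercive.

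With all three hypotheses verified, Lemma~\ref{lem:solution_existence} produces $x_*\in\mathcal C$ attaining $\inf_{x\in\mathcal C}J(Hx+u')$, which by the identification above is precisely an optimal solution of (\ref{eqn:optimization}). I do not expect any real obstacle here: the corollary is essentially Lemma~\ref{lem:solution_existence} applied to a product-space repackaging of the data, and the coercivity bookkeeping just described is the only non-trivial point.
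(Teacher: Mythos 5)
Your proposal is correct and follows essentially the same route as the paper: define the separable function $J(u,w):=J_1(u)+J_2(w)$ on $\mathbb R^{m+p}$, set the shift $(-y,0)$, and invoke Lemma~\ref{lem:solution_existence}. The only difference is that you spell out the coercivity of the sum (which the paper dismisses as clear), and your bookkeeping there is sound.
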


\begin{proof}
 For any $z=(z_\alpha, z_\beta)\in \mathbb R^{m+p}$ with $z_\alpha \in \mathbb R^m$ and $z_\beta\in \mathbb R^p$, define the function $J(z) := J_1( z_\alpha)+ J_2(z_\beta)$. Clearly, $J$ is coercive, lower semi-continuous, and bounded below on $\mathbb R^{m+p}$. For any given $y$, define $z':=(-y, 0)\in \mathbb R^{m+p}$. Hence, $J_1 (A x - y) +  J_2( F x ) = J( H x + z')$ for any $x \in \mathbb R^N$. Consequently, the minimization problem in (\ref{eqn:optimization}) can be equivalently written as $\inf_{x \in \mathcal C}  J(Hx + z')$.  Since $H\mathcal C$ is closed, it follows from Lemma~\ref{lem:solution_existence} that the minimization problem in (\ref{eqn:optimization}) attains an optimal solution.
%
%
%
\end{proof}

By exploiting the above results, we obtain the following solution existence results for some general minimization problems motivated by the basis pursuit, LASSO, and basis pursuit denoising problems.

%
%

\begin{theorem} \label{thm:solution_existence_g_f}
 Let  $A \in \mathbb R^{m\times N}, C\in \mathbb R^{p\times N},  y \in \mathbb R^m$, $d \in \mathbb R^p$, and $E \in \mathbb R^{k\times N}$ be given, and suppose the functions $J_1:\mathbb R^m \rightarrow \mathbb R$ and $J_2:\mathbb R^k \rightarrow \mathbb R$ are coercive, bounded below, and lower semi-continuous.
 Then each of the following minimization problems attains an optimal solution as long as it is feasible:
 \begin{eqnarray*}
 (P_1): &  &   \min_{x \in \mathbb{R}^N} \ J_2(Ex) \quad  \text{subject to} \quad   Ax = y, \  \mbox{ and } \  C  x \ge   d; \\
  (P_2): & &   \min_{  x \in \mathbb{R}^N} \ J_1(Ax - y) + J_2(E x) \quad \text{subject to} \quad   C  x \ge   d; \\
%
%
(P_3): &  &   \min_{x \in \mathbb{R}^N}  \  J_1(Ax - y) \quad  \text{subject to} \quad   \|E_1 x \|_1 \le \eta_1, \ \ldots, \ \| E_r x \|_1 \le \eta_r, \,  \mbox{ and } \, C  x \ge   d,
 \end{eqnarray*}
where  $E_i \in \mathbb R^{k_i\times N}$ and $\eta_i\ge 0$ for each $i=1, \ldots, r$ in $(P_3)$. Moreover, if $J_1:\mathbb R^m \rightarrow \mathbb R$ is coercive and lower semi-continuous, and $J_2:\mathbb R^k \rightarrow \mathbb R$ satisfies the conditions specified above, then the following problem  attains an optimal solution as long as it is feasible:
\begin{eqnarray*}
(P_4): &  &   \min_{x \in \mathbb{R}^N} \ J_2(E x) \quad  \text{subject to} \quad   J_1(Ax - y)\le \varepsilon, \  \mbox{ and } \  C  x \ge   d, \ \mbox{ where } \ \varepsilon\in \mathbb R.
 \end{eqnarray*}
\end{theorem}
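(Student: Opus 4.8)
# Proof Proposal for Theorem~\ref{thm:solution_existence_g_f}

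The plan is to reduce all four problems to the form treated by Corollary~\ref{coro:existence_solution_norm} (or the even more elementary Lemma~\ref{lem:solution_existence}), so that the only substantive work is verifying the closedness hypothesis "$H\mathcal{C}$ is closed" for an appropriate linear map $H$ and feasible set $\mathcal{C}$. First I would dispose of $(P_1)$: here the feasible set is $\mathcal{C}_1 := \{x \mid Ax = y,\ Cx \ge d\}$, a polyhedron, and the objective is $J_2(Ex)$. Taking $H = E$, the image $E\mathcal{C}_1$ is the image of a polyhedron under a linear map, hence a polyhedron (in particular closed). Applying Lemma~\ref{lem:solution_existence} with this $H$, $\mathcal{C} = \mathcal{C}_1$, $u' = 0$, and $J = J_2$ gives a minimizer, provided $\mathcal{C}_1 \ne \emptyset$, i.e., provided $(P_1)$ is feasible.

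Next, for $(P_2)$ I would take $\mathcal{C}_2 := \{x \mid Cx \ge d\}$ (a polyhedron, nonempty by feasibility), set $F := E$ and form $H := \begin{bmatrix} A \\ E \end{bmatrix}$; then $H\mathcal{C}_2$ is again the linear image of a polyhedron, hence closed, and Corollary~\ref{coro:existence_solution_norm} with $J_1, J_2$ as given yields an optimal solution. For $(P_3)$ the feasible set is $\mathcal{C}_3 := \{x \mid \|E_i x\|_1 \le \eta_i,\ i=1,\dots,r,\ Cx \ge d\}$; since each constraint $\|E_i x\|_1 \le \eta_i$ is polyhedral (it unfolds into finitely many linear inequalities $\pm(E_i x)_j \le \cdots$, or more precisely, $\mathbf{1}^T |E_i x| \le \eta_i$ is equivalent to $2^{k_i}$ sign-pattern inequalities), $\mathcal{C}_3$ is again a polyhedron. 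Taking $H = A$, $\mathcal{C} = \mathcal{C}_3$, $u' = -y$, and $J = J_1$ in Lemma~\ref{lem:solution_existence} (using that $J_1$ is coercive, lsc, bounded below) gives the minimizer once feasibility is assumed. Finally, for $(P_4)$: since $J_1$ is coercive and lsc, the sublevel set $\{z \in \mathbb{R}^m \mid J_1(z) \le \varepsilon\}$ is closed and bounded, hence compact; then $\mathcal{C}_4 := \{x \mid J_1(Ax - y) \le \varepsilon,\ Cx \ge d\}$ is closed as the intersection of the closed set $A^{-1}(\{J_1 \le \varepsilon\} + y)$ with the polyhedron $\{Cx \ge d\}$. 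Taking $H = E$, I would need $E\mathcal{C}_4$ closed; rather than argue this directly I would apply Lemma~\ref{lem:solution_existence} with $H = \begin{bmatrix} A \\ E \end{bmatrix}$ restricted to $\mathcal{C}_4$ — or, cleanly, observe that $(P_4)$ is $\inf_{x \in \mathcal{C}_4} J_2(Ex)$ with $\mathcal{C}_4$ closed and the constraint $J_1(Ax-y) \le \varepsilon$ forcing $Ax$ to lie in a compact set, so that the objective restricted to $\mathcal{C}_4$ can be handled by a direct Weierstrass-type argument: take a minimizing sequence $\{x^{(n)}\} \subseteq \mathcal{C}_4$; coercivity of $J_2$ bounds $\{Ex^{(n)}\}$, the sublevel condition bounds $\{Ax^{(n)}\}$, and together with $\{Cx^{(n)} \ge d\}$ these need not bound $\{x^{(n)}\}$ itself unless $\begin{bmatrix} A \\ E \\ C \end{bmatrix}$ has trivial null space — so I would instead quotient out $N\bigl(\begin{bmatrix} A \\ E \end{bmatrix}\bigr) \cap \{v \mid Cv \ge 0,\ -Cv \ge 0 \text{ on active part}\}$, or more simply invoke the fact that the image of a polyhedron-intersected-with-a-compact-preimage under a linear map is still closed.

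The main obstacle is precisely this last closedness verification for $(P_4)$: unlike $(P_1)$–$(P_3)$, the feasible set $\mathcal{C}_4$ is not polyhedral (it involves a nonlinear sublevel set of $J_1$), so "image of a polyhedron under a linear map is closed" does not apply verbatim. The cleanest fix I anticipate is to note that $\mathcal{C}_4 = \mathcal{P} \cap A^{-1}(K)$ where $\mathcal{P} = \{Cx \ge d\}$ is polyhedral and $K = \{z \mid J_1(z-y) \le \varepsilon\}$ is compact; then for the map $H = E$ one shows $H\mathcal{C}_4$ is closed by taking a convergent sequence $Ex^{(n)} \to w$, using compactness of $K$ to pass to a subsequence with $Ax^{(n)} \to \bar a \in K$, writing $x^{(n)} = x^{(n)}_0 + x^{(n)}_1$ with $x^{(n)}_0$ in the orthogonal complement of $N\bigl(\begin{bmatrix} A \\ E \end{bmatrix}\bigr)$ — on which $\begin{bmatrix} A \\ E \end{bmatrix}$ is bounded-below, so $x^{(n)}_0$ is bounded — and finally adjusting the $N(\begin{bmatrix} A \\ E \end{bmatrix})$-component to stay in $\mathcal{P}$ (a recession-cone argument: if no bounded choice of $x^{(n)}_1$ keeps $x^{(n)} \in \mathcal{P}$, one extracts a recession direction of $\mathcal{P}$ lying in $N(\begin{bmatrix} A \\ E \end{bmatrix})$ and translates it away). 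I expect the authors' proof to either carry out this argument or, more likely, to present $(P_4)$ as a direct consequence of Lemma~\ref{lem:solution_existence} after establishing closedness of $E\mathcal{C}_4$ as a short lemma; I would structure my writeup to handle $(P_1)$–$(P_3)$ in two lines each and devote the bulk of the proof to this closedness point.
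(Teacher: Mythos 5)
Your treatment of $(P_1)$--$(P_3)$ is exactly the paper's: polyhedrality of the feasible set (Minkowski--Weyl for the linear image, and the unfolding of each $\|E_i x\|_1\le\eta_i$ into linear inequalities), followed by Lemma~\ref{lem:solution_existence} with $(H,u')=(E,0)$, Corollary~\ref{coro:existence_solution_norm} with $H=\begin{bmatrix} A\\ E\end{bmatrix}$, and Lemma~\ref{lem:solution_existence} with $(H,u')=(A,-y)$, respectively. For $(P_4)$ you take a genuinely different route in the one step that matters. The paper decomposes only the sublevel set $\mathcal D:=\{x\mid J_1(Ax-y)\le\varepsilon\}$ as $\mathcal D=\mathcal W+N(A)$ with $\mathcal W:=R(A^T)\cap\mathcal D$ shown compact (closedness from lsc, boundedness from coercivity plus $N(A)\cap R(A^T)=\{0\}$), concludes $E\mathcal D=E\mathcal W+E\,N(A)$ is closed, and then brings in the polyhedron by writing $E\mathcal C_4=E\mathcal D\cap E\mathcal P$ before invoking Lemma~\ref{lem:solution_existence}; you instead keep the intersection $\mathcal C_4=\mathcal P\cap A^{-1}(K)$ intact and prove closedness of $E\mathcal C_4$ by a sequence argument using the combined null space $N\bigl(\begin{bmatrix} A\\ E\end{bmatrix}\bigr)$: boundedness of the component orthogonal to that null space, lsc to keep the limit in the sublevel constraint, and a recession-type adjustment to restore membership in $\mathcal P$. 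What each buys: the paper's version is shorter and isolates the analytic content in the compactness of $\mathcal W$, but the identity $E(\mathcal D\cap\mathcal P)=E\mathcal D\cap E\mathcal P$ is, as written, only an inclusion $\subseteq$ in general and needs justification, whereas your argument handles the intersection with $\mathcal P$ correctly from the start at the cost of a longer verification. To finish your recession step cleanly, note that $\mathcal P+N\bigl(\begin{bmatrix} A\\ E\end{bmatrix}\bigr)$ is a closed polyhedron: since $x^{(n)}=x_0^{(n)}+x_1^{(n)}\in\mathcal P$ with $x_1^{(n)}$ in the null space, $x_0^{(n)}\in\mathcal P+N\bigl(\begin{bmatrix} A\\ E\end{bmatrix}\bigr)$, so the limit $x_0$ lies there too, i.e. $x_0-v'\in\mathcal P$ for some null-space vector $v'$, and $x_0-v'$ is the desired feasible preimage of $w$; this replaces the somewhat vague extraction of a recession direction and makes your proof complete.
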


\begin{proof}
(i) Consider the problem $(P_1)$ first. Define the (nonempty) feasible set $\mathcal C_1:=\{ x \, | \, Ax = y, \ C x \ge d \}$. Since $\mathcal C_1$ is a convex polyhedron, we deduce via Minkowski-Wyel Decomposition Theorem that $E \mathcal C_1$ is also a convex polyhedron and thus closed. Applying Lemma~\ref{lem:solution_existence} to  $J(\cdot)=J_2 (\cdot)$, $H=E$, $u'=0$, and $\mathcal C=\mathcal C_1$, we conclude that this problem attains a minimizer.

(ii) We then consider the problem $(P_2)$. Clearly, the (nonempty) feasible set $\mathcal C_2:=\{ x \, | \, \ C x \ge d \}$ is a convex polyhedron. Let $H:=\begin{bmatrix} A \\ E\end{bmatrix} \in \mathbb R^{(m+k)\times N}$. Hence, $H \mathcal C_2$ is closed. It follows from Corollary~\ref{coro:existence_solution_norm} directly that a minimizer exists.

(iii) We next consider  the problem $(P_3)$. As indicated in the proof of  Theorem~\ref{thm:unique_optimal_P_IV02}, since each $g_i$ is a convex PL function,  the (nonempty) feasible set $\mathcal C_3:=\{ x \, | \, g_1(x)\le \eta_1, \, \ldots, \, g_r(x) \le \eta_r, \,  \mbox{ and } \, C  x \ge   d\}$ is a polyhedron. Therefore, $A \mathcal C_4$ is closed. By letting  $J(\cdot)=J_1(\cdot)$, $H=A$, and $u'=-y$, and $\mathcal C=\mathcal C_3$, the desired result follows readily from Lemma~\ref{lem:solution_existence}.

(iv) Lastly, we consider the problem $(P_4)$.  Let the (nonempty) set $\mathcal D:=\{ x \in \mathbb R^N \, | \, J_1(Ax -y) \le \varepsilon\}$, and define $\mathcal W:=R(A^T)\cap \mathcal D$. We claim that $\mathcal D = \mathcal W + N(A)$. It is straightforward to show that $\mathcal D\supseteq \mathcal W + N(A)$.  For the converse, consider an arbitrary $x \in \mathcal D$. Note that there exist unique vectors $u\in R(A^T)$ and $v\in N(A)$ such that $x=u+v$. Since $Ax = A u$, we have $u \in \mathcal D$. Therefore, $u\in \mathcal W$ so that $x \in \mathcal W + N(A)$. This completes the proof of the claim.

We next show that  $\mathcal W$ is a compact set.
%
%
Toward this end, we note that since $J_1(\cdot)$ is lower semi-continuous, $J_1(A x - y)$ is also lower semi-continuous in $x$. By observing that $\mathcal D$ is the sub-level set of a lower semi-continuous function, we deduce that $\mathcal D$ is closed. Sine $R(A^T)$ is also closed, so is $\mathcal W$.
We show next that $\mathcal W$ is bounded. Since $J_1(\cdot)$ is coercive, we see via the definition of the set $\mathcal D$ that $A \mathcal D$ is bounded. Suppose, by contradiction, that $\mathcal W$ is unbounded. Then there exists a sequence $(x_n)$ in $\mathcal W:=R(A^T)\cap \mathcal D$ such that $(\|x_n\|) \rightarrow \infty$. Without loss of generality, we assume that $(x_n/\|x_n\|)$ converges to $z_*$ with $\|z_*\|=1$. Since $(A x_n)$ is in $A\mathcal D$, it is thus bounded so that $(A x_n/\|x_n\|) \rightarrow 0$. This implies that $A z_* =0$ or equivalently $z_*\in N(A)$. Furthermore, since $(x_n/\|x_n\|)$ is a convergent sequence in the closed set $R(A^T)$, we have $z_*\in R(A^T)$. In view of $N(A)\cap R(A^T)=\{ 0 \}$, we have $z_*=0$, a contradiction. Hence, $\mathcal W$ is bounded and thus compact.

%
%
%
%
%
\mycut{
We next show that  $\mathcal W$ is a compact set. Since $\mathcal W=\mathcal E \times \{0\}$, where $0\in \mathbb R^{|\Ical^c|}$, it suffices to show that $\mathcal E$ is compact in $\mathbb R^{|\Ical|}$. Toward this end, we note that since $J_1(\cdot)$ is lower semi-continuous, $J_1(A_{\bullet \Ical} \, u - y)$ is also lower semi-continuous in $u$. By observing that $\mathcal E$ is the sub-level set of a lower semi-continuous function, we deduce that $\mathcal E$ is closed. Furthermore, we show that $\mathcal E$ is bounded. In fact, since $J_1(\cdot)$ is coercive, we see via the definition of the set $\mathcal E$ that $A_{\bullet\Ical} \, \mathcal E$ is bounded. Moreover, since $A_{\bullet\Ical}$ has full column rank, it further implies that $\mathcal E$ is bounded.
%
%
This shows that $\mathcal E$ is compact, so is $\mathcal W$.
}

Since $\mathcal D=\mathcal W + N(A)$, we have $E \mathcal D=E\mathcal W + E N(A)$. Note that $E\mathcal W$ is compact, and that $E N(A)$ is a subspace and thus closed. Consequently, $E\mathcal D$ is closed.  Since the (nonempty) feasible set  $\mathcal C_4=\mathcal D\cap \mathcal P$, where $\mathcal P:=\{ x \, | \, \ C x \ge d \}$, we have $E \mathcal C_4= E\mathcal D \cap E \mathcal P$. As both $E\mathcal D$ and $E \mathcal P$ are closed, so is $E \mathcal C_4$. It follows from the similar argument as before that $(P_4)$ attains an optimal solution.
\end{proof}

We apply the above theorem to several representative $\ell_1$ minimization problems.

\begin{corollary} \label{XXX:solution_existence_g_f}
 Let  $A \in \mathbb R^{m\times N}, C\in \mathbb R^{p\times N},  y \in \mathbb R^m$, and $d \in \mathbb R^p$ be given, $g(x)=\| E x \|_1$ for some $E \in \mathbb R^{k\times N}$, and $f(u)=\| u \|^s, \forall \, u\in \mathbb R^m$ where $\| \cdot \|$ is a norm on $\mathbb R^m$ and $s>0$. Then each of the following minimization problems attains an optimal solution as long as it is feasible:
 \begin{eqnarray*}
\mbox{BP-like problem}: &  &   \min_{x \in \mathbb{R}^N} \ g(x) \quad  \text{subject to} \quad   Ax = y, \  \mbox{ and } \  C  x \ge   d; \\
\mbox{LASSO-like problem}: & &   \min_{  x \in \mathbb{R}^N} \ f(Ax - y) + g(x) \quad \text{subject to} \quad   C  x \ge   d; \\
\mbox{BPDN-I like problem}: &  &   \min_{x \in \mathbb{R}^N} \ g(x) \quad  \text{subject to} \quad   f(Ax - y)\le \varepsilon, \  \mbox{ and } \  C  x \ge   d, \mbox{ where } \varepsilon>0; \\
\mbox{BPDN-II-like problem}: &  &   \min_{x \in \mathbb{R}^N}  \  f(Ax - y) \quad  \text{subject to} \quad   \| E_1 x \|_1 \le \eta_1, \, \ldots, \, \| E_r x \|_1 \le \eta_r, \,  \mbox{ and } \, C  x \ge   d,
 \end{eqnarray*}
where  $E_i \in \mathbb R^{k_i\times N}$ and $\eta_i \ge 0$ for each $i=1, \ldots, r$ in the last problem.
\end{corollary}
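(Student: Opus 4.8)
The plan is to recognize each of the four problems as a special instance of the corresponding problem in Theorem~\ref{thm:solution_existence_g_f}, and then merely verify that the concrete choices $J_1 := f = \|\cdot\|^s$ on $\mathbb R^m$ and $J_2 := \|\cdot\|_1$ (on $\mathbb R^k$, or on $\mathbb R^{k_i}$ for the individual constraints of the BPDN-II-like problem) meet the hypotheses imposed there. With these identifications one has $g(x) = \|Ex\|_1 = J_2(Ex)$, so that the BP-like problem becomes $(P_1)$, the LASSO-like problem becomes $(P_2)$, the BPDN-II-like problem becomes $(P_3)$ (with $g_i = \|E_i\cdot\|_1$ and each $\eta_i\ge 0$), and the BPDN-I-like problem becomes $(P_4)$ (with $\varepsilon>0$, which is permissible since the theorem allows $\varepsilon\in\mathbb R$).

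First I would check $J_2 = \|\cdot\|_1$: as a norm it is continuous, hence lower semi-continuous; it is nonnegative, hence bounded below by $0$; and it is coercive since $\|z\|_1\to\infty$ as $\|z\|\to\infty$. Next I would check $J_1 = \|\cdot\|^s$ with $s>0$: the map $u\mapsto\|u\|$ is continuous and $t\mapsto t^s$ is continuous on $[0,\infty)$, so $J_1$ is continuous, hence lower semi-continuous; it is nonnegative, hence bounded below; and it is coercive because $\lim_{t\to\infty} t^s = \infty$ for every $s>0$ while $\|u\|\to\infty$ as $\|u\|\to\infty$. In particular $J_1$ satisfies both the stronger hypothesis (coercive, bounded below, lower semi-continuous) needed for $(P_1)$--$(P_3)$ and the weaker one (coercive, lower semi-continuous) needed for $(P_4)$.

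Having verified these conditions, I would invoke Theorem~\ref{thm:solution_existence_g_f} in each of the four cases, concluding that each problem attains an optimal solution whenever its respective feasible set is nonempty. That finishes the argument.

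I do not anticipate any genuine obstacle here: the content is purely a hypothesis check, and in particular no convexity, smoothness, or strict convexity of $f$ is needed for existence. The only point that deserves a line of care is the coercivity of $u\mapsto\|u\|^s$ for an arbitrary norm $\|\cdot\|$ on $\mathbb R^m$ and arbitrary exponent $s>0$, which follows from the equivalence of all norms on the finite-dimensional space $\mathbb R^m$ together with $\lim_{t\to\infty} t^s=\infty$.
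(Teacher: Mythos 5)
Your proposal is correct and follows essentially the same route as the paper: the corollary is proved there by invoking Theorem~\ref{thm:solution_existence_g_f} after observing that $J_1=\|\cdot\|^s$ (with $s>0$) and $J_2=\|\cdot\|_1$ are coercive, continuous (hence lower semi-continuous), and bounded below by zero. Your additional remarks (the norm-equivalence justification of coercivity and the weaker hypothesis sufficing for $(P_4)$) are consistent with, and merely elaborate on, the paper's one-line verification.
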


\begin{proof}
  It is a direct consequence of Theorem~\ref{thm:solution_existence_g_f} by noting that $J_1(\cdot)=\|\cdot \|^s$ with $s>0$ in $f$ and $J_2(\cdot)=\|\cdot \|_1$ in $g$ are coercive, continuous (thus lower semi-continuous), and bounded below by zero.
\end{proof}

\mycut{
\begin{proof}
 Consider the constrained BP first. Define the (nonempty) feasible set $\mathcal C_1:=\{ x \, | \, Ax = y, \ C x \ge d \}$. Since $\mathcal C_1$ is a convex polyhedron, we deduce via Minkowski-Wyel Decomposition Theorem that $E \mathcal C_1$ is also a convex polyhedron and thus closed. Applying Corollary~\ref{coro:existence_solution_norm} to the objective function $g(x)=\| E x \|_1$ and setting $A=0$, $y=0$, $F=E$, $\|\cdot \|_\beta=\| \cdot \|_1$, $r=1$, and $\mathcal C=\mathcal C_1$, we conclude that the constrained BP attains a minimizer.

 We then consider the constrained LASSO. Clearly, the (nonempty) feasible set $\mathcal C_2:=\{ x \, | \, \ C x \ge d \}$ is a convex polyhedron. Let $H:=\begin{bmatrix} A \\ E\end{bmatrix} \in \mathbb R^{(m+k)\times N}$. Hence, $H \mathcal C_2$ is closed. Letting $\|\cdot \|_\alpha=\| \cdot \|_2$, $\|\cdot \|_\beta=\| \cdot \|_1$, $s=2$, $r=1$,  $F=E$, and $\mathcal C=\mathcal C_2$, we deduce via Corollary~\ref{coro:existence_solution_norm} that a minimizer exists.

 We next the consider the constrained BPDN-I. Let $\mathcal D:=\{ x \, | \, f(Ax -y) \le \varepsilon\}$, and $r=\mbox{rank}(A)$. It follows from the singular value decomposition of $A$ and the quadratic form of $f$ that there exists an $N\times N$ orthogonal matrix $V$ such that $V \mathcal D=\{ V x \, | \, x \in  \mathcal D\}=\mathcal U + \mathcal W$, where $\mathcal U=\mathcal E\times \{0\}$ with  $0\in \mathbb R^{N-r}$ and $\mathcal E$ being a compact ellipsoid in $\mathbb R^r$, and $\mathcal W$ is the subspace $\{0\} \times \mathbb R^{N-r}$ with $0\in \mathbb R^r$. Therefore, $\mathcal U$ is a compact set.
 Note that $E\mathcal D=\{ E V^T z \, | \, z \in \mathcal U + \mathcal W\} = E V^T \mathcal U + EV^T \mathcal W$, $E V^T \mathcal U$ is compact, and  $EV^T \mathcal W$ is closed. Therefore, $E\mathcal D$ is closed.
 Since the (nonempty) feasible set  $\mathcal C_3=\mathcal D\cap \mathcal P$, where $\mathcal P:=\{ x \, | \, \ C x \ge d \}$, we have $E \mathcal C_3= E\mathcal D \cap E \mathcal P$. Since both $E\mathcal D$ and $E \mathcal P$ are closed, so is $E \mathcal C_3$. It follows from the similar argument for the constrained LASSO that there exists an optimal solution to the constrained BPDN-I.

 Lastly, we consider the constrained BPDN-II. As indicated in the proof of  Theorem~\ref{thm:unique_optimal_P_IV02}, since each $g_i$ is a convex PL function,  the (nonempty) feasible set $\mathcal C_4:=\{ x \, | \, g_1(x)\le \eta_1, \, \ldots, \, g_r(x) \le \eta_r, \,  \mbox{ and } \, C  x \ge   d\}$ is a polyhedron. Therefore, $A \mathcal C_4$ is closed. By letting $\|\cdot \|_\alpha=\| \cdot \|_2$, $s=2$,  $F=0$, and $\mathcal C=\mathcal C_4$, the desired result follows directly from Corollary~\ref{coro:existence_solution_norm}.
\end{proof}
}

%
%
%
%

%
\subsection{Properties of $\ell_1$-norm based Convex PA Functions} \label{subsect:properties_L1_function}

In order to apply the general results developed in Section~\ref{sect:general_results} to an $\ell_1$-norm based convex PA function, it is crucial to find the matrix $W$ defined in (\ref{eqn:b_Q}) associated with this function at a given vector. Toward this end, we first establish this matrix for the $\ell_1$-norm.
Note that the max-formulation of the $\ell_1$-norm on $\mathbb R^k$ is given by $g(z):=\| z \|_1 =\max_{1, \ldots, 2^k} p^T_i z, \forall \, z \in \mathbb R^k$, where each
\begin{equation} \label{eqn:p_i_L1norm}
  p_i \in \Big\{ \big( \pm 1, \pm 1, \ldots, \pm 1 \big)^T  \Big\}  \subset \mathbb R^k.
\end{equation}
For a given $z^* \in \mathbb R^k$, let $\mathcal S$ be the support of $z^*$ and $\Scal^c$ be its complement. Further, define the index set $\Ical:=\{ i\in \{1, \ldots, k \} \, | \, p^T_i  z^* = \| z^* \|_1 \}$, and $b:=\mbox{sgn}( z^*_\Scal ) \in \mathbb R^{|\Scal |}$.  Here $|\Ical | = 2^{|\Scal^c|}$. Using the definitions of $\Scal$ and $\Scal^c$, we can decompose $g(z)$ as the sum of two $\ell_1$-norms on $\mathbb R^{|\Scal|}$ and $\mathbb R^{|\Scal^c|}$ respectively, i.e., $g(z)=\|z \|_1=\|z_\Scal\|_1 + \| z_{\Scal^c} \|_1, \forall \, z \in \mathbb R^k$. For notational purpose, define $g_\Scal(z_\Scal):=\|z_\Scal\|_1$ and $g_{\Scal^c}(z_{\Scal^c}):=\|z_{\Scal^c}\|_1$. Hence the subdifferentials $\partial g_\Scal(z^*_\Scal)=\{ b \}$, and $\partial g_{\Scal^c}(z^*_{\Scal^c})=\partial g_{\Scal^c}(0)=\{ u \in \mathbb R^{|\Scal^c|} \, | \, \| u \|_\infty \le 1 \} $.
By the comment after equation (\ref{eqn:b_Q}), it is easy to verify that the matrix defined in (\ref{eqn:b_Q}) associated with $\| z \|_1$ at $z^*$ is given by $\wh W = \begin{bmatrix}  \wh W_{\bullet\Scal} & \wh W_{\bullet \Scal^c} \end{bmatrix} \in \mathbb R^{|\Ical | \times k}$, where $\wh W_{\bullet\Scal}=\mathbf 1 \cdot b^T$, and each row of $\wh W_{\bullet \Scal^c}$ is of the form $(\pm 1, \ldots, \pm 1) \in \mathbb R^{|\Scal^c|}$. For example, if $|\Scal^c|=2$,
$
   \wh W_{\bullet \Scal^c}  = \begin{bmatrix} 1 & 1 \\ 1 & -1 \\ -1 & 1 \\ -1 & -1 \end{bmatrix}.
$
We collect several properties of $\wh W_{\bullet \Scal^c}$ in the following lemma. These properties will be used for the latter development; see the proofs of Lemma~\ref{lem:W_L1norm}, Proposition~\ref{prop:unique_optimal_EL1}, and Proposition~\ref{prop:Lasso_EL1}.

\begin{lemma} \label{lem:What_properties}
  For the given $z^* \in \mathbb R^k$, the matrix $\wh W_{\bullet \Scal^c}\in \mathbb R^{|\Ical|\times |\Scal^c|}$ defined above satisfies:
  \begin{itemize}
 \item [(i)] The columns of  $\wh W_{\bullet \Scal^c}$ are linearly independent;
 \item [(ii)] For any row $\wh W_{i \Scal^c}$, there is another row $\wh W_{j \Scal^c}$ with $i \ne j$ such that $\wh W_{j \Scal^c} = - \wh W_{i \Scal^c}$;
 \item [(iii)] $\mbox{conv}\{ \wh W^T_{i \Scal^c} \, | \, i=1, \ldots, |\Ical| \}=\partial g_{\Scal^c}(0)= \{ u \in \mathbb R^{|\Scal^c|} \, | \, \| u \|_\infty \le 1 \}$, and
     \[
     \left\{ \sum^{|\Ical|}_{i=1} \lambda_i \cdot \wh W^T_{i \Scal^c} \ \Big | \ \sum^{|\Ical|}_{i=1} \lambda_i =1, \ \ \lambda_i>0, \ \forall \, i=1, \ldots, |\Ical| \right \} \,  = \,  \Big \{ u \in \mathbb R^{|\Scal^c|} \ \big | \ \| u \|_\infty < 1 \Big \}.
     \]
\end{itemize}
\end{lemma}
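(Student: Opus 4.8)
The plan is to exploit the explicit combinatorial description of $\wh W_{\bullet\Scal^c}$: its rows are exactly the $2^{|\Scal^c|}$ sign vectors $(\pm 1, \ldots, \pm 1) \in \mathbb R^{|\Scal^c|}$. Write $q:=|\Scal^c|$ and think of the rows as indexed by sign patterns $\sigma \in \{-1,+1\}^q$. For part (ii), the statement is immediate: given a row $\wh W_{i\Scal^c}$ corresponding to the sign pattern $\sigma$, the row corresponding to $-\sigma$ is also present (since all $2^q$ patterns occur), and it equals $-\wh W_{i\Scal^c}$; distinctness $i\neq j$ holds because $\sigma \neq -\sigma$. For part (i), to see the columns are linearly independent, suppose $\wh W_{\bullet\Scal^c}\, c = 0$ for some $c\in\mathbb R^q$, i.e.\ $\sum_{t=1}^q \sigma_t c_t = 0$ for every sign pattern $\sigma$. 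Fixing an index $t_0$ and choosing two patterns that agree in all coordinates except $t_0$ and subtracting yields $2c_{t_0}=0$; hence $c=0$. (Equivalently, one notes the rows include all standard-basis-like contrasts, or that $\tfrac{1}{2^q}\wh W_{\bullet\Scal^c}^T\wh W_{\bullet\Scal^c} = I_q$ by orthogonality of characters, which also gives independence directly.)

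The substantive part is (iii). The first identity, $\mbox{conv}\{\wh W^T_{i\Scal^c}\} = \{u \in \mathbb R^q \mid \|u\|_\infty \le 1\}$, is a standard fact: the unit $\ell_\infty$-ball is a polytope whose vertex set is exactly $\{-1,+1\}^q$, and these are precisely the rows of $\wh W_{\bullet\Scal^c}$. One direction is trivial (each $\wh W^T_{i\Scal^c}$ lies in the cube, hence so does the convex hull); the reverse follows because any $u$ with $\|u\|_\infty \le 1$ can be written as a convex combination of sign vectors—e.g.\ by a coordinatewise argument, writing each $u_t = \tfrac{1+u_t}{2}(+1) + \tfrac{1-u_t}{2}(-1)$ and taking the product measure, which expresses $u$ as a convex combination of the $2^q$ sign vectors with weights $\prod_t \tfrac{1+\sigma_t u_t}{2}$. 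The claim $\mbox{conv}\{\wh W^T_{i\Scal^c}\} = \partial g_{\Scal^c}(0)$ was already recorded above the lemma (the subdifferential of $\|\cdot\|_1$ at $0$ is the $\ell_\infty$-ball), or follows from (\ref{eqn:subdiff_g}).

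For the relative-interior-type identity in (iii), the plan is to invoke Lemma~\ref{lem:relative_interior}: since $\wh W^T_{1\Scal^c}, \ldots, \wh W^T_{|\Ical|\Scal^c}$ are the vertices of the full-dimensional polytope $\{\|u\|_\infty \le 1\}$, Lemma~\ref{lem:relative_interior} gives that the set of strict convex combinations $\{\sum \lambda_i \wh W^T_{i\Scal^c} \mid \sum\lambda_i = 1,\ \lambda_i > 0\}$ equals $\mbox{ri}(\mbox{conv}\{\wh W^T_{i\Scal^c}\}) = \mbox{ri}\{u \mid \|u\|_\infty \le 1\}$; and since this cube is full-dimensional in $\mathbb R^q$, its relative interior is its interior, namely $\{u \mid \|u\|_\infty < 1\}$. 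The one point requiring a line of care—and the only place anything could go wrong—is that Lemma~\ref{lem:relative_interior} describes $\mbox{ri}$ of the convex hull of a list of points as the strict-combination set, which presupposes we are taking the convex hull of exactly those points; this is fine here since every row of $\wh W_{\bullet\Scal^c}$ is a genuine vertex (no row is a convex combination of the others, as they are all extreme points of the cube), so no degeneracy arises. I expect this last bookkeeping step to be the main, though minor, obstacle; everything else is routine.
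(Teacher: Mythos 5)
Your proposal is correct and follows essentially the same route as the paper: identify the rows of $\wh W_{\bullet\Scal^c}$ with the vertices of the unit $\|\cdot\|_\infty$-ball (which is $\partial g_{\Scal^c}(0)$), then apply Lemma~\ref{lem:relative_interior} together with full-dimensionality of the cube to get the strict-combination description of the open ball; the paper simply treats (i)--(ii) and the vertex identification as immediate where you spell them out. Your closing caveat is harmless but unnecessary, since Lemma~\ref{lem:relative_interior} describes the relative interior of the convex hull of any finite point list, vertices or not.
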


\begin{proof}
 Statements (i) and (ii) are trivial. To show the first part of statement (iii),  it follows from the comment after equation (\ref{eqn:b_Q}) that the columns of $\wh W^T_{\bullet \Scal^c}$ are convex hull generators (or vertices/extreme points) of  $\partial g_{\Scal^c}(0)$, which is the closed unit ball with respect to the infinity-norm $\|\cdot \|_\infty$. Lastly, we deduce from Lemma~\ref{lem:relative_interior} and the first part of (iii) that
 \begin{eqnarray*}
 \lefteqn{  \left\{ \sum^{|\Ical|}_{i=1} \lambda_i \cdot \wh W^T_{i \Scal^c} \ \Big | \ \sum^{|\Ical|}_{i=1} \lambda_i =1, \ \ \lambda_i>0, \ \forall \, i=1, \ldots, |\Ical| \right \}  } \\
     & = &
    \mbox{ri}\Big(\mbox{conv}\{ \wh W^T_{i \Scal^c} \, | \, i=1, \ldots, |\Ical| \} \Big)
    \, = \, \mbox{ri} \Big( \{ u \in \mathbb R^{|\Scal^c|} \, | \, \| u \|_\infty \le 1 \} \Big) \\
    &  = & \mbox{int} \Big( \{ u \in \mathbb R^{|\Scal^c|} \, | \, \| u \|_\infty \le 1 \} \Big)
    \, = \, \Big \{ u \in \mathbb R^{|\Scal^c|} \, | \, \| u \|_\infty < 1 \Big \},
 \end{eqnarray*}
 where the second-to-last equation follows from the fact that the unit closed ball with respect to the infinity-norm $\| \cdot \|_\infty$ has nonempty interior.
%
%
\end{proof}

Motivated by generalized $\ell_1$ minimization, we consider a sign-symmetric polyhedral gauge of the form
$g(x)=\| E x \|_1$ for a (nonzero) matrix $E \in \mathbb R^{k \times N}$. Many $\ell_1$-norm based convex PL functions arising from applications can be represented by this form, e.g., $\ell_1$-trend filtering \cite{KimBody_SIAMreview09}, sparse fused LASSO \cite{TibhsiraniSRZK_JRSS05}, and generalized LASSO \cite{TibshiraniRJ_thesis11}; see Sections~\ref{subsubsect:sum_norms} and \ref{subsect:fused_Lasso} for more discussions and examples.
  For a given $x^* \in \mathbb R^N$, let $\mathcal S$ denote the support of $E x^*$, i.e., $\mathcal S =\{ i \in \{1, \ldots, k\} \, | \, (E x^*)_i \ne 0 \}$, and $\Scal^c$ be its complement. Further, define $\wh b:=\mbox{sgn}( (Ex^*)_\Scal )$ and the index set $\Ical:=\{ i\in \{1, \ldots, k \} \, | \, p^T_i E x^* = \| E x^* \|_1 \}$, where $p_i$'s are defined in (\ref{eqn:p_i_L1norm}) for the max-formulation of the $\ell_1$-norm.  Here $|\Ical | = 2^{|\Scal^c|}$.
In light of the comment after equation (\ref{eqn:b_Q}), we obtain the matrix $\wh W$ defined in (\ref{eqn:b_Q}) associated with $\| \cdot \|_1$ at $E x^*$ as $\wh W = \begin{bmatrix}  \wh W_{\bullet\Scal} & \wh W_{\bullet \Scal^c} \end{bmatrix} \in \mathbb R^{|\Ical | \times k}$, where $\wh W_{\bullet\Scal}=\mathbf 1 \cdot \wh b^T$ and $\wh W_{\bullet \Scal^c}\in \mathbb R^{|\Ical|\times |\Scal^c|}$ is the matrix whose columns of its transpose form the vertices of the closed unit ball in $\mathbb R^{|\Scal^c|}$ with respect to $\| \cdot \|_\infty$. Note that the matrix $\wh W_{\bullet \Scal^c}$ satisfies the conditions given in Lemma~\ref{lem:What_properties}. In view of $\partial g(x^*)=E^T  \partial \| \cdot\|_1(Ex^*)$, we see that the matrix $W$ associated with the function $g$ at $x^*$ is
\begin{equation} \label{eqn:W_matrix}
  W \, = \, \wh W \cdot E =  \begin{bmatrix}  \wh W_{\bullet\Scal} & \wh W_{\bullet \Scal^c} \end{bmatrix} \begin{bmatrix} E_{\Scal \bullet} \\ E_{\Scal^c \bullet} \end{bmatrix} =  \mathbf 1 \cdot b^T + \wh W_{\bullet \Scal^c} E_{\Scal^c \bullet}  \, \in  \, \mathbb R^{| \Ical | \times N}, \quad \ b \, := \, E^T_{\Scal \bullet} \wh b \in \mathbb R^N.
\end{equation}
By virtue of these results, we obtain the following lemma which characterizes the null space of $W$.

\begin{lemma} \label{lem:W_L1norm}
 Let the matrix $W$ be defined in (\ref{eqn:W_matrix}) for the function $g(x)= \| E x \|_1$ at $x^*$.
 For a given $v \in \mathbb R^N$, $W v =0$ if and only if $b^T v=0$ and $E_{\Scal^c \bullet}\, v =0$.
\end{lemma}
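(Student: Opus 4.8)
The plan is to read off the block structure of $W$ from (\ref{eqn:W_matrix}) and reduce everything to the two elementary properties (i)--(ii) of Lemma~\ref{lem:What_properties}. Concretely, for any $v \in \mathbb R^N$ the product splits as
\[
  W v \, = \, (b^T v)\, \mathbf 1 \, + \, \wh W_{\bullet \Scal^c}\, (E_{\Scal^c \bullet} v) \ \in \ \mathbb R^{|\Ical|},
\]
since $\wh W_{\bullet\Scal} = \mathbf 1\, \wh b^T$ and $b = E^T_{\Scal\bullet}\wh b$, so that $\wh W_{\bullet\Scal}\,(E_{\Scal\bullet}v) = \mathbf 1\,(\wh b^T E_{\Scal\bullet} v) = (b^T v)\mathbf 1$. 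From here the ``if'' direction is immediate: under the hypotheses $b^Tv = 0$ and $E_{\Scal^c\bullet}v = 0$ both summands vanish, so $Wv = 0$.

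For the ``only if'' direction I would argue as follows. Assume $Wv = 0$ and set $c := b^Tv \in \mathbb R$ and $w := E_{\Scal^c\bullet}v \in \mathbb R^{|\Scal^c|}$, so the displayed identity reads $\wh W_{i\Scal^c}\, w = -c$ for every row index $i$. First I would invoke the sign-symmetry property (ii) of Lemma~\ref{lem:What_properties}: choosing a row $\wh W_{i\Scal^c}$ together with its negative counterpart $\wh W_{j\Scal^c} = -\wh W_{i\Scal^c}$ and evaluating on both gives $-c = \wh W_{i\Scal^c} w$ and $-c = \wh W_{j\Scal^c} w = -\wh W_{i\Scal^c} w = c$, whence $c = -c$, i.e.\ $b^Tv = 0$. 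Substituting back, the equation collapses to $\wh W_{\bullet\Scal^c}\,(E_{\Scal^c\bullet}v) = 0$, and since the columns of $\wh W_{\bullet\Scal^c}$ are linearly independent by property (i) of Lemma~\ref{lem:What_properties}, this forces $E_{\Scal^c\bullet}v = 0$, as desired.

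I do not expect any genuine obstacle here: the statement is essentially bookkeeping once the rank-one-plus-$\wh W_{\bullet\Scal^c}$ decomposition of $W$ is in hand, and the two facts about $\wh W_{\bullet\Scal^c}$ that drive the proof have already been recorded in Lemma~\ref{lem:What_properties}. The only point that needs a little care is to use the support set $\Scal$ of $Ex^*$ and its complement consistently, so that the $\wh W_{\bullet\Scal}$ block really is the rank-one matrix $\mathbf 1\,\wh b^T$ and the product $Wv$ cleanly separates into the scalar term $(b^Tv)\mathbf 1$ and the term $\wh W_{\bullet\Scal^c}(E_{\Scal^c\bullet}v)$; once that decoupling is in place the argument is only a couple of lines.
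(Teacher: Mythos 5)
Your proposal is correct and follows essentially the same route as the paper's proof: both use the decomposition $Wv=(b^Tv)\mathbf 1+\wh W_{\bullet\Scal^c}(E_{\Scal^c\bullet}v)$ from (\ref{eqn:W_matrix}), invoke the sign-symmetry property (ii) of Lemma~\ref{lem:What_properties} to force $b^Tv=0$, and then the linear independence of the columns of $\wh W_{\bullet\Scal^c}$ (property (i)) to conclude $E_{\Scal^c\bullet}v=0$. Your row-pairing computation is just a slightly more explicit rendering of the paper's observation that $\wh W_{\bullet\Scal^c}u$ contains two entries of equal absolute value and opposite signs.
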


\begin{proof}
 The ``if'' part is trivial, and we show the ``only if'' part only. Suppose $W v =0$. Let $\alpha:=b^T v \in \mathbb R$ and $u:=E_{\Scal^c \bullet} \, v\in \mathbb R^{| \Scal^c| }$. Hence, we have $W v = \alpha \cdot \mathbf 1  + \wh W_{\bullet\Scal^c} \, u=0$. It follows from (ii) of Lemma~\ref{lem:What_properties} that the vector $W_{\bullet\Scal^c} u$ has two elements of the same absolute value (which is possibly zero) but opposite signs. This shows that $\alpha$ is zero, i.e., $b^T v =0$. This further implies that $W_{\bullet\Scal^c} \, u=0$. Since the columns of $W_{\bullet\Scal^c}$ are linearly independent (cf. (i) of Lemma~\ref{lem:What_properties}), we obtain $u=0$ or equivalently $E_{\Scal^c \bullet}\, v=0$.
\end{proof}

%
%

%
%
%
%

%
\subsection{Solution Uniqueness of Convex Optimization Problems Involving $\| E x \|_1$} \label{subsect:EL1}

Through this subsection, we let $g(x)=\| E x \|_1, \forall \, x \in \mathbb R^N$ for a (nonzero) matrix $E \in \mathbb R^{k \times N}$, and  let $\mathcal P:=\{ x \in \mathbb R^N \, | \, C x \ge d \}$ be a nonempty polyhedron where $C \in \mathbb R^{p \times N}$ and $d \in \mathbb R^p$. Furthermore, for a given $x^*$, recall the definitions of the index sets $\alpha$ and $\mathcal S$ in Section~\ref{subsect:properties_L1_function}, and the definitions of the matrix $W$ and the vector $b$ in (\ref{eqn:W_matrix}).
We first consider the BP-like problem (\ref{eqn:P_I02}) involving the function $g$.

\begin{proposition} \label{prop:unique_optimal_EL1}
 Let $g(x)=\| E x \|_1$, and $x^*$ be a feasible point of the optimization problem (\ref{eqn:P_I02}). Then $x^*$ is the unique minimizer if and only if the following conditions hold:
 \begin{itemize}
   \item [(a)]  The matrix
   $
      \begin{bmatrix} A \\ C_{\alpha\bullet} \\ E_{\Scal^c \bullet} \end{bmatrix}
   $
   has full column rank; and
   \item [(b)] There exist $u \in \mathbb R^m$,  $u' \in \mathbb R^{|\alpha|}_{++}$, and $u'' \in \mathbb R^{|\Scal^c|}$ with $\| u'' \|_\infty<1$  such that
$
  A^T u +  C^T_{\alpha \bullet} u' - E^T_{\Scal^c\bullet} u'' = b.
$
 \end{itemize}
\end{proposition}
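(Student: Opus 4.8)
The plan is to specialize Theorem~\ref{thm:unique_optimal_P_I02} to the function $g(x)=\|Ex\|_1$, using the explicit form of the matrix $W$ recorded in (\ref{eqn:W_matrix}), namely $W = \mathbf 1\cdot b^T + \wh W_{\bullet\Scal^c} E_{\Scal^c\bullet}$ with $b=E^T_{\Scal\bullet}\wh b$, together with Lemma~\ref{lem:W_L1norm} and Lemma~\ref{lem:What_properties}. Since $x^*$ is feasible for (\ref{eqn:P_I02}), Theorem~\ref{thm:unique_optimal_P_I02} tells us that $x^*$ is the unique minimizer if and only if its conditions (i) and (iii) hold; I would work with the (iii)-formulation because its normalization $0<w''<\mathbf 1$, $\mathbf 1^T w''=1$ is exactly the hypothesis needed to invoke Lemma~\ref{lem:What_properties}(iii).

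First I would rewrite condition (i). By Lemma~\ref{lem:W_L1norm}, $Wv=0$ is equivalent to $b^Tv=0$ and $E_{\Scal^c\bullet}v=0$, so condition (i) of Theorem~\ref{thm:unique_optimal_P_I02} becomes
\[
  \{\, v\in\mathbb R^N \mid Av=0,\ C_{\alpha\bullet}v=0,\ b^Tv=0,\ E_{\Scal^c\bullet}v=0 \,\}=\{0\}.
\]
Next I would rewrite condition (iii). Using $W^T = b\,\mathbf 1^T + E^T_{\Scal^c\bullet}\wh W^T_{\bullet\Scal^c}$ and $\mathbf 1^T w''=1$, one gets $W^T w'' = b + E^T_{\Scal^c\bullet}\big(\wh W^T_{\bullet\Scal^c} w''\big)$; and as $w''$ ranges over $\{w''\mid 0<w''<\mathbf 1,\ \mathbf 1^T w''=1\}$, Lemma~\ref{lem:What_properties}(iii) shows that $u'':=\wh W^T_{\bullet\Scal^c}w''$ ranges exactly over $\{u''\mid \|u''\|_\infty<1\}$. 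Hence condition (iii) is equivalent to the existence of $w$, $w'\in\mathbb R^{|\alpha|}_{++}$, and $u''$ with $\|u''\|_\infty<1$ such that $A^Tw - C^T_{\alpha\bullet}w' + b + E^T_{\Scal^c\bullet}u'' = 0$; setting $u=-w$ and $u'=w'$ this is precisely condition (b).

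It then remains to reconcile the reduced condition (i) with condition (a). That (a) implies the reduced (i) is immediate: full column rank of $\begin{bmatrix} A \\ C_{\alpha\bullet} \\ E_{\Scal^c\bullet}\end{bmatrix}$ forces $\{v\mid Av=0,\ C_{\alpha\bullet}v=0,\ E_{\Scal^c\bullet}v=0\}=\{0\}$, a fortiori the smaller set in the reduced (i). For the converse, I would use condition (b): for any $v$ with $Av=0$, $C_{\alpha\bullet}v=0$, $E_{\Scal^c\bullet}v=0$, taking the inner product of $A^Tu + C^T_{\alpha\bullet}u' - E^T_{\Scal^c\bullet}u'' = b$ with $v$ yields $b^Tv=0$, so $v$ lies in the set appearing in the reduced (i), whence $v=0$; this is (a). Combining, the pair $[\text{(i) and (iii)}]$ holds if and only if $[\text{(a) and (b)}]$ holds, and the proposition follows from Theorem~\ref{thm:unique_optimal_P_I02}. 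I expect the main technical point to be the second paragraph — correctly reading off, from the vertex/convex-hull description in Lemma~\ref{lem:What_properties}(iii), that the image of the open probability simplex under $\wh W^T_{\bullet\Scal^c}$ is exactly the open $\ell_\infty$-ball, and keeping the sign bookkeeping straight; one should also note the degenerate case $\Scal^c=\emptyset$, where the $E_{\Scal^c\bullet}$ and $u''$ terms drop out and the argument goes through verbatim.
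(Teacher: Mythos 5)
Your proposal is correct and follows essentially the same route as the paper's proof: reduce to conditions (i) and (iii) of Theorem~\ref{thm:unique_optimal_P_I02}, use Lemma~\ref{lem:W_L1norm} to rewrite (i), use the identity $W^T w'' = b + E^T_{\Scal^c\bullet}\wh W^T_{\bullet\Scal^c}w''$ together with Lemma~\ref{lem:What_properties}(iii) to identify (iii) with (b), and then use the certificate in (b) to upgrade the reduced null-space condition to the full-column-rank statement (a). The only (harmless) addition is your explicit remark about the degenerate case $\Scal^c=\emptyset$, which the paper leaves implicit.
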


\begin{proof}
 By Theorem~\ref{thm:unique_optimal_P_I02}, it suffices to show that  conditions (i) and (iii) of Theorem~\ref{thm:unique_optimal_P_I02} hold if and only if conditions (a)-(b) hold, where  we recall that (i) $\{ v  \, | \, A v =0, \ C_{\alpha \bullet} v = 0, \ W  v = 0 \} = \{ 0 \}$, and (iii) there exist $w \in \mathbb R^m$, $w' \in \mathbb R^{|\alpha|}_{++}$, and $w'' \in \mathbb R^{|\Ical|}$ with $0<w''<\mathbf 1$ and $\mathbf 1^T w''=1$ such that  $A^T w - C^T_{\alpha\bullet} w' + W^T w'' \, = \, 0$.

 ``Only if''. Suppose  conditions (i) and (iii) of Theorem~\ref{thm:unique_optimal_P_I02} hold with suitable $w, w'$, and $w''$ satisfying the specified conditions.
%
%
  In view of the expression of $W$ given in (\ref{eqn:W_matrix}), we have  $W^T w'' = b \cdot \mathbf 1^T w'' + E^T_{\Scal^c\bullet} \wh W^T_{\bullet \Scal^c} w'' = b + E^T_{\Scal^c\bullet} u''$, where $u'':=\wh W^T_{\bullet \Scal^c} w''$ and we use $\mathbf 1^T w''=1$. By the second part of statement (iii) Lemma~\ref{lem:What_properties},  we obtain $\| u''\|_\infty<1$.
%
 %
 %
  Hence, letting $u=-w$ and $u'=w'>0$, we have $A^T u + C^T_{\alpha\bullet} u' - E^T_{\Scal^c\bullet}  u''  =  b$. This yields condition (b). Moreover, it follows from condition (i) and Lemma~\ref{lem:W_L1norm}  that
 \begin{equation} \label{eqn:zero_set}
   \big\{ v \, | \,  A v =0, \ C_{\alpha \bullet} v = 0, \ b^T v =0, \  E_{\Scal^c \bullet}\, v =0 \big \} \, = \, \{ 0 \}.
\end{equation}
We claim that  equation (\ref{eqn:zero_set}) implies condition (a). Suppose, in contrast, that (a) fails under (\ref{eqn:zero_set}), i.e., there exists $v \ne 0$ such that $A v =0, C_{\alpha\bullet} v =0$, and $E_{\Scal^c \bullet} v =0$. In view of condition (b), we have
\[
 v^T b \, = \, v^T \big( A^T u + C^T_{\alpha\bullet} u' - E^T_{\Scal^c\bullet}  u''  \big) \, = \, 0.
\]
This gives rise to a contradiction to (\ref{eqn:zero_set}). Hence, condition (a) holds.

``If''. Suppose conditions (a)-(b) hold. Note that condition (a) implies that $\{ v \, | \,  A v =0, \ C_{\alpha \bullet} v = 0, \ b^T v =0, \  E_{\Scal^c \bullet}\, v =0 \} \, = \, \{ 0 \}$. By  Lemma~\ref{lem:W_L1norm}, we have $\{ v \, | \,  A v =0, \ C_{\alpha \bullet} v = 0, \ W v =0 \} \, = \, \{ 0 \}$, which is condition (i) of Theorem~\ref{thm:unique_optimal_P_I02}. Furthermore, we deduce from condition (b) that there exist $u \in \mathbb R^m$,  $u' \in \mathbb R^{|\alpha|}_{++}$, and $u'' \in \mathbb R^{|\Scal^c|}$ with $\| u'' \|_\infty<1$  such that $A^T u +  C^T_{\alpha \bullet} u' - E^T_{\Scal^c\bullet} u'' = b$. By letting $w=-u$ and $w'=u'$, we have $A^T w -  C^T_{\alpha \bullet} w' + b+E^T_{\Scal^c\bullet} u'' =0$. Since $\| u'' \|_\infty<1$, we deduce via the second part of (iii) of Lemma~\ref{lem:What_properties} that  there exists $w'' \in \mathbb R^{|\Ical|}$ with $0<w''<\mathbf 1$ and $\mathbf 1^T w''=1$ such that $u''= W^T_{\bullet \Scal^c} w''$.
%
%
Therefore, $b+E^T_{\Scal^c\bullet} u''= \big ( b \cdot \mathbf 1^T + E^T_{\Scal^c\bullet} W^T_{\bullet \Scal^c}) w'' = W^T w''$, where  the second equation follows from (\ref{eqn:W_matrix}). This gives rise to condition (ii) of Theorem~\ref{thm:unique_optimal_P_I02}.
\end{proof}

The necessary and sufficient conditions for unique optimal solutions to the LASSO-like problem (\ref{eqn:P_II02}) are given in the following proposition.

\begin{proposition} \label{prop:Lasso_EL1}
 Let $g(x)=\|E x \|_1$, $f:\mathbb R^m \rightarrow \mathbb R$ be a $C^1$  strictly convex function, and  $x^* \in \mathbb R^N$ be a feasible point of the problem (\ref{eqn:P_II02}).
 Then $x^*$ is the unique minimizer of (\ref{eqn:P_II02}) if and only if conditions (a)-(b) of Proposition~\ref{prop:unique_optimal_EL1} and the following condition hold:
  \begin{itemize}
  \item [(c)] There exist $\wt u \in \mathbb R^{|\alpha|}_+$ and  $\wt u' \in \mathbb R^{|\Scal^c|}$ with $\| \wt u' \|_\infty \le 1$ such that
         $
             A^T \nabla f(A x^* -y)- C^T_{\alpha\bullet} \, \wt u + b + E^T_{\Scal^c\bullet} \wt u'  =  0.
         $
  \end{itemize}
\end{proposition}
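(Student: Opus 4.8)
The plan is to reduce the statement to Theorem~\ref{thm:unique_optimal_P_II02} and then to recycle, almost verbatim, the matrix manipulations already carried out in the proof of Proposition~\ref{prop:unique_optimal_EL1}. By Theorem~\ref{thm:unique_optimal_P_II02}, $x^*$ is the unique minimizer of (\ref{eqn:P_II02}) if and only if conditions (i), (ii), (iii) of that theorem hold. Conditions (i) and (ii) there are, after renaming, exactly condition (i) and the $0<z''<\mathbf 1$, $\mathbf 1^T z''=1$ form of condition (iii) of Theorem~\ref{thm:unique_optimal_P_I02}; and the proof of Proposition~\ref{prop:unique_optimal_EL1} establishes precisely that those two conditions are equivalent to conditions (a)--(b). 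Hence the only thing left to prove is that, under (a)--(b), condition (iii) of Theorem~\ref{thm:unique_optimal_P_II02} is equivalent to condition (c).

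For this last equivalence I would substitute the expression of $W$ from (\ref{eqn:W_matrix}) into condition (iii) of Theorem~\ref{thm:unique_optimal_P_II02}. Writing $w'\in\mathbb R^{|\Ical|}$ for the vector there with $0\le w'\le \mathbf 1$ and $\mathbf 1^T w'=1$, one gets $W^T w' = b\,(\mathbf 1^T w') + E^T_{\Scal^c\bullet}\wh W^T_{\bullet\Scal^c}w' = b + E^T_{\Scal^c\bullet}\wt u'$, where $\wt u' := \wh W^T_{\bullet\Scal^c}w'$. Since the entries of $w'$ are convex-combination weights, $\wt u'$ lies in $\mbox{conv}\{\wh W^T_{i\Scal^c}\}$, which by the first part of (iii) of Lemma~\ref{lem:What_properties} equals the closed unit $\ell_\infty$-ball, so $\|\wt u'\|_\infty\le 1$; taking $\wt u:=w\ge 0$ turns condition (iii) into condition (c). Conversely, given $\wt u\ge 0$ and $\wt u'$ with $\|\wt u'\|_\infty\le 1$ satisfying the equation in (c), the same part of Lemma~\ref{lem:What_properties}(iii) supplies $w'$ with $0\le w'\le \mathbf 1$ and $\mathbf 1^T w'=1$ such that $\wt u' = \wh W^T_{\bullet\Scal^c}w'$ (nonnegative weights summing to one automatically satisfy $w'_i\le 1$); then $b + E^T_{\Scal^c\bullet}\wt u' = W^T w'$ by (\ref{eqn:W_matrix}), which recovers condition (iii) with $w=\wt u$. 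Combining the three reductions gives that $x^*$ is the unique minimizer iff (a)--(b) and (c) hold.

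I do not expect a genuine obstacle here, since the computation mirrors that of Proposition~\ref{prop:unique_optimal_EL1}. The one point to keep straight is that the target inequality in condition (c) is the \emph{closed} ball $\|\wt u'\|_\infty\le 1$ — because in condition (iii) of Theorem~\ref{thm:unique_optimal_P_II02} the weight vector $w'$ is only required to satisfy $0\le w'\le\mathbf 1$, not $0<w'<\mathbf 1$ — so one invokes the \emph{first} part of Lemma~\ref{lem:What_properties}(iii), whereas Proposition~\ref{prop:unique_optimal_EL1} used the strict inequality $\|u''\|_\infty<1$ and hence the \emph{second} part. One should also mind the sign bookkeeping ($\wt u=w$ here versus $u=-w$ in the pure BP reduction) and explicitly confirm that conditions (i)--(ii) of Theorem~\ref{thm:unique_optimal_P_II02} coincide with the conditions shown equivalent to (a)--(b) in Proposition~\ref{prop:unique_optimal_EL1}, which they do.
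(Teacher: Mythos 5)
Your proposal is correct and follows essentially the same route as the paper: invoke Theorem~\ref{thm:unique_optimal_P_II02}, identify its conditions (i)--(ii) with (a)--(b) via the argument of Proposition~\ref{prop:unique_optimal_EL1}, and convert condition (iii) into (c) by writing $W^T w' = b + E^T_{\Scal^c\bullet}\wh W^T_{\bullet\Scal^c}w'$ and using the closed-ball (first) part of Lemma~\ref{lem:What_properties}(iii). Your explicit remark about using the closed rather than open ball, and the sign bookkeeping, is exactly the distinction the paper's proof relies on.
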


\begin{proof}
 In light of Theorem~\ref{thm:unique_optimal_P_II02} and Proposition~\ref{prop:unique_optimal_EL1}, we only need to show that condition (iii) of Theorem~\ref{thm:unique_optimal_P_II02} is equivalent to condition (c) of this proposition. Using (iii) of Lemma~\ref{lem:What_properties}, we deduce that $\| \wt u' \|_\infty \le 1$ for some $\wt u' \in \mathbb R^{|\Scal^c|}$ if and only if there exists $w' \in \mathbb R^{|\Ical|}$ with $0 \le w' \le \mathbf 1$ and $\mathbf 1^T w'=1$ such that $\wt u'= \wh W^T_{\bullet \Scal^c} w'$. Applying this result and the similar argument in the proof of Proposition~\ref{prop:unique_optimal_EL1}, we conclude that condition (iii) of Theorem~\ref{thm:unique_optimal_P_II02} is equivalent to condition (c) of the proposition.
\end{proof}


The following proposition pertains to the BPDN-I-like problem (\ref{eqn:P_III02}); condition (2.c) given below follows from statement (c) of Remark~\ref{remark:BPDI_I}. Its proof is rather straightforward and thus omitted.

\begin{proposition} \label{prop:BPDN_I_EL1}
 Let $g(x)=\|E x \|_1$, $f:\mathbb R^m \rightarrow \mathbb R$ be a $C^1$  strictly convex function, and $x^* \in \mathbb R^N$ be a feasible point of the problem (\ref{eqn:P_III02}).
 \begin{itemize}
   \item [C.1] Suppose $f(Ax^* - y)< \varepsilon$. Then $x^*$ is the unique minimizer of  (\ref{eqn:P_III02}) if and only if   $
      \begin{bmatrix} C_{\alpha\bullet} \\ E_{\Scal^c \bullet} \end{bmatrix}
   $
   has full column rank,   and  there exist $u\in \mathbb R^{|\alpha|}_{++}$ and $u' \in \mathbb R^{|\Scal^c|}$ with $\| u' \|_\infty < 1$ such that $C^T_{\alpha\bullet} u = b+ E^T_{\Scal^c\bullet} u'$.

   \item [C.2] Suppose $f(Ax^* - y)=\varepsilon$. Then $x^*$ is the unique minimizer of  (\ref{eqn:P_III02}) if and only if conditions (a)-(b) of Proposition~\ref{prop:unique_optimal_EL1} and the following condition hold:
     \begin{itemize}
          \item [(2.c)] If $\mathcal K:=\{ v \in \mathbb R^N \, | \, \big(\nabla f(A x^*-y) \big)^T A v <0, \ C_{\alpha\bullet} v \ge 0 \}$ is nonempty, then there exist a positive real number $\theta$, $\wt u \in \mathbb R^{|\alpha|}_+$,   and $\wt u' \in \mathbb R^{|\Scal^c|}$ with $\| \wt u' \|_\infty \le 1$ such that
     $
        \theta \cdot A^T \nabla f(A x^*-y)- C^T_{\alpha\bullet}
        \wt u +   b + E^T_{\Scal^c\bullet} \wt u' = 0.
     $
       \end{itemize}
 \end{itemize}
\end{proposition}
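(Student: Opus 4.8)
The plan is to specialize Theorem~\ref{thm:unique_optimal_P_III02} to the convex PL function $g(x)=\|Ex\|_1$, in exactly the way Propositions~\ref{prop:unique_optimal_EL1} and \ref{prop:Lasso_EL1} were derived from Theorems~\ref{thm:unique_optimal_P_I02} and \ref{thm:unique_optimal_P_II02}. The only tools needed are Lemma~\ref{lem:W_L1norm}, which replaces the null-space equality $Wv=0$ by the pair $b^Tv=0$, $E_{\Scal^c\bullet}v=0$, and Lemma~\ref{lem:What_properties}(iii), which identifies the set of vectors of the form $\wh W^T_{\bullet\Scal^c}w$ with $w$ in the closed (resp. relatively open) unit simplex with the closed (resp. open) unit $\|\cdot\|_\infty$-ball, together with the identity $W^Tw=(\mathbf 1^Tw)\,b+E^T_{\Scal^c\bullet}\wh W^T_{\bullet\Scal^c}w$ from (\ref{eqn:W_matrix}); recall also that a minimizer of (\ref{eqn:P_III02}) is assumed to exist. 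For Case C.1 ($f(Ax^*-y)<\varepsilon$), part C.1 of Theorem~\ref{thm:unique_optimal_P_III02} reduces the question to Theorem~\ref{thm:unique_optimal_P_I02} with $A=0$ and $y=0$; applying Proposition~\ref{prop:unique_optimal_EL1} in that case yields precisely the full-column-rank condition on $\begin{bmatrix}C_{\alpha\bullet}\\ E_{\Scal^c\bullet}\end{bmatrix}$ together with the dual condition $C^T_{\alpha\bullet}u=b+E^T_{\Scal^c\bullet}u'$ with $u\in\mathbb R^{|\alpha|}_{++}$ and $\|u'\|_\infty<1$.

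For Case C.2 ($f(Ax^*-y)=\varepsilon$), note first that conditions (2.i)--(2.ii) of Theorem~\ref{thm:unique_optimal_P_III02} are exactly conditions (i) and (iii) of Theorem~\ref{thm:unique_optimal_P_I02}, so by Proposition~\ref{prop:unique_optimal_EL1} they are equivalent to conditions (a)--(b). For condition (2.iii) I would use its reformulation in Remark~\ref{remark:BPDI_I}(c): if $\mathcal K\ne\emptyset$, there exist $\theta>0$, $\wt w\in\mathbb R^{|\alpha|}_+$, and $\wt w'\in\mathbb R^{|\Ical|}$ with $0\le\wt w'\le\mathbf 1$ and $\mathbf 1^T\wt w'=1$ such that $\theta\,A^T\nabla f(Ax^*-y)-C^T_{\alpha\bullet}\wt w+W^T\wt w'=0$. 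Writing $W^T\wt w'=b+E^T_{\Scal^c\bullet}\wt u'$ with $\wt u':=\wh W^T_{\bullet\Scal^c}\wt w'$ and invoking the first part of Lemma~\ref{lem:What_properties}(iii) — the admissible vectors $\wt u'$ obtained this way are precisely those with $\|\wt u'\|_\infty\le1$ — turns this into condition (2.c); since the cone $\mathcal K$ does not involve $g$, the ``$\mathcal K$ nonempty'' proviso carries over unchanged. Collecting the two cases completes the proof.

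The argument is essentially pure substitution; the only steps needing care are those already encountered in Proposition~\ref{prop:unique_optimal_EL1}. In Case C.1 the full-column-rank reformulation of the null-space condition is valid only in the presence of the dual equality (pairing a nonzero kernel vector of $\begin{bmatrix}C_{\alpha\bullet}\\ E_{\Scal^c\bullet}\end{bmatrix}$ against that equality forces $b^Tv=0$, the needed contradiction). One must also keep the strict/non-strict inequalities aligned — $\|u'\|_\infty<1$ in C.1 versus $\|\wt u'\|_\infty\le1$ in (2.c) — which is exactly the distinction between the two halves of Lemma~\ref{lem:What_properties}(iii); and one should note that the positive scalar $\theta$ in Remark~\ref{remark:BPDI_I}(c) is harmless, since $\mathcal K\ne\emptyset$ already forces the $W^T$-multiplier to be nonzero.
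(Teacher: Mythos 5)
Your proposal is correct and follows essentially the route the paper intends: the paper omits the proof, noting only that (2.c) comes from Remark~\ref{remark:BPDI_I}(c) and that the rest is a straightforward specialization, and your argument fills this in exactly as expected — Case C.1 via Theorem~\ref{thm:unique_optimal_P_III02}(C.1) and Proposition~\ref{prop:unique_optimal_EL1} with $A=0$, and Case C.2 by translating (2.i)–(2.iii) through (\ref{eqn:W_matrix}), Lemma~\ref{lem:W_L1norm}, and Lemma~\ref{lem:What_properties}(iii). Your cautionary remarks (the rank condition needing the dual equality, the strict versus non-strict $\infty$-norm bounds, and the harmlessness of $\theta$ when $\mathcal K\neq\emptyset$) are precisely the points that make the substitution valid.
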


The next result characterizes solution uniqueness of the following BPDN-II-like problem:
\begin{equation} \label{eqn:constrained_BPDN_II_EL1}  
\min_{  x \in \mathbb{R}^N} \
 f(Ax-y) \ \
\quad \text{subject to}
\quad   g(x)\le \eta, \ \mbox{ and } \ \ C  x \ge   d.
\end{equation}
This problem is a special case of the problem (\ref{eqn:P_IV_multiple02}) with $r=1$, $g(x)=\| E x \|_1$, and a constant $\eta>0$.

\begin{proposition} \label{prop:BPDN_II_EL1}
Let $g(x)=\|E x \|_1$, $f:\mathbb R^m \rightarrow \mathbb R$ be a $C^1$  strictly convex function,
 and $x^* \in \mathbb R^N$ be a feasible point of the problem (\ref{eqn:constrained_BPDN_II_EL1}).  
\begin{itemize}
   \item [C.1] Suppose $g(x^*)< \eta$. Then $x^*$ is the unique minimizer of  (\ref{eqn:constrained_BPDN_II_EL1}) if and only if  the matrix $\begin{bmatrix} A \\ C_{\alpha\bullet} \end{bmatrix}$ has full column rank, and there exist $w\in \mathbb R^m$, $w'\in \mathbb R^{|\alpha|}_{++}$  and  $u \in \mathbb R^{|\alpha|}_+$ such that $A^T w=C^T_{\alpha\bullet} w'$ and $A^T \nabla f(A x^*-y) = C^T_{\alpha\bullet} u$;
   \item [C.2] Suppose $g(x^*)= \eta$. Then $x^*$ is the unique minimizer of  (\ref{eqn:constrained_BPDN_II_EL1}) if and only if conditions (a)-(b) of Proposition~\ref{prop:unique_optimal_EL1} and the following condition hold:
 \begin{itemize}
     \item [(2.c)]  There exist $\wt u \in \mathbb R^{|\alpha|}_+$,  $\mu\in \mathbb R_+$, and $\wt u' \in \mathbb R^{|\Scal^c|}$ with $\| \wt u' \|_\infty \le 1$
     %
     %
     such that
     $
        A^T \nabla f(A x^*-y) - C^T_{\alpha\bullet} \wt u  + \mu \cdot \Big( b + E^T_{\Scal^c\bullet} \wt u' \Big)  =  0.
     $
     %
 \end{itemize}
 \end{itemize}
\end{proposition}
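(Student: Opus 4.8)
The plan is to obtain both cases as consequences of Theorem~\ref{thm:unique_optimal_P_IV02} applied with $r=1$, $g_1=g=\|E\,\cdot\,\|_1$ and $\eta_1=\eta$, together with the translation tools already set up in the proofs of Proposition~\ref{prop:unique_optimal_EL1} and Proposition~\ref{prop:Lasso_EL1}. Concretely I will use the explicit form of $W=W_1$ in (\ref{eqn:W_matrix}), Lemma~\ref{lem:W_L1norm} (which says $Wv=0$ iff $b^Tv=0$ and $E_{\Scal^c\bullet}v=0$), and Lemma~\ref{lem:What_properties}(iii) (which identifies the set $\{\wh W^T_{\bullet\Scal^c}w'' : w''\ge 0,\ \mathbf 1^Tw''=1\}$ with the closed $\ell_\infty$-unit ball in $\mathbb R^{|\Scal^c|}$, and its strict analogue with the open unit ball).

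\emph{Case C.1 ($g(x^*)<\eta$).} Since $g$ is continuous and $g(x^*)<\eta$, the constraint $g(x)\le\eta$ is inactive at $x^*$, so the index set $\Jcal$ of Theorem~\ref{thm:unique_optimal_P_IV02} is empty and the matrices $W_i$ drop out of conditions (i)--(iv). Those conditions then collapse to: $\{v:Av=0,\ C_{\alpha\bullet}v=0\}=\{0\}$, i.e.\ $\begin{bmatrix}A\\ C_{\alpha\bullet}\end{bmatrix}$ has full column rank; the existence of $w\in\mathbb R^m$ and $w'\in\mathbb R^{|\alpha|}_{++}$ with $A^Tw=C^T_{\alpha\bullet}w'$; and the existence of $u\in\mathbb R^{|\alpha|}_+$ with $A^T\nabla f(Ax^*-y)=C^T_{\alpha\bullet}u$. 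This is exactly the statement of C.1.

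\emph{Case C.2 ($g(x^*)=\eta$).} Now $\Jcal=\{1\}$ and $W_1=W$ is the matrix in (\ref{eqn:W_matrix}). I will show that conditions (i)--(ii) of Theorem~\ref{thm:unique_optimal_P_IV02} are together equivalent to conditions (a)--(b) of Proposition~\ref{prop:unique_optimal_EL1}, and that condition (iv) is equivalent to (2.c); the proposition then follows. For the first equivalence, Lemma~\ref{lem:W_L1norm} rewrites condition (i) as $\{v:Av=0,\ C_{\alpha\bullet}v=0,\ b^Tv=0,\ E_{\Scal^c\bullet}v=0\}=\{0\}$, and writing $W^T=b\mathbf 1^T+E^T_{\Scal^c\bullet}\wh W^T_{\bullet\Scal^c}$ and invoking Lemma~\ref{lem:What_properties}(iii), condition (ii) becomes: there exist $u\in\mathbb R^m$, $u'\in\mathbb R^{|\alpha|}_{++}$ and $u''$ with $\|u''\|_\infty<1$ such that $A^Tu+C^T_{\alpha\bullet}u'-E^T_{\Scal^c\bullet}u''=b$. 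Combining the two exactly as in the proof of Proposition~\ref{prop:unique_optimal_EL1} --- pairing a candidate null vector $v$ against the identity of condition (b) to force $b^Tv=0$ --- upgrades the linear-system version of (i) to the full-column-rank condition (a) while keeping (b), and the ``if'' direction there gives the converse. For the second equivalence, with $\Jcal=\{1\}$ the sum in condition (iv) equals $\theta_1 W^Tz'_1$ and $W^Tz'_1=b(\mathbf 1^Tz'_1)+E^T_{\Scal^c\bullet}(\wh W^T_{\bullet\Scal^c}z'_1)=b+E^T_{\Scal^c\bullet}\wt u'$ with $\wt u':=\wh W^T_{\bullet\Scal^c}z'_1$; by Lemma~\ref{lem:What_properties}(iii) the vectors $\wt u'$ arising this way from $z'_1$ with $0\le z'_1\le\mathbf 1$ and $\mathbf 1^Tz'_1=1$ are precisely those with $\|\wt u'\|_\infty\le 1$, so renaming $\mu:=\theta_1$ and $\wt u:=z$ turns (iv) into (2.c), and the reverse substitution gives the converse.

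\emph{Expected obstacle.} No step is genuinely hard; the proof is bookkeeping on top of Theorem~\ref{thm:unique_optimal_P_IV02} and Lemmas~\ref{lem:What_properties} and~\ref{lem:W_L1norm}. The one point that needs care is that condition (a) of Proposition~\ref{prop:unique_optimal_EL1} is strictly stronger than the face-value translation of condition (i) of Theorem~\ref{thm:unique_optimal_P_IV02} --- it drops the equation $b^Tv=0$ --- so (a) must be extracted jointly with condition (b)/(2.ii) using the inner-product argument of Proposition~\ref{prop:unique_optimal_EL1}, not term by term. A minor point is the degenerate scalar $\mu=\theta_1=0$ in (2.c)/(iv), for which the accompanying vector $\wt u'$ (resp.\ $z'_1$) is constrained only by its normalization and may be chosen freely, so the equivalence still goes through.
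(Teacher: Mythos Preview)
Your proposal is correct and follows essentially the same approach as the paper: both reduce C.1 to Theorem~\ref{thm:unique_optimal_P_IV02} with $\Jcal=\emptyset$, and for C.2 both translate conditions (i)--(ii) into (a)--(b) of Proposition~\ref{prop:unique_optimal_EL1} via Lemma~\ref{lem:W_L1norm} and the inner-product argument, and condition (iv) into (2.c) via the expression (\ref{eqn:W_matrix}) and Lemma~\ref{lem:What_properties}(iii). Your explicit flagging of the joint use of (b) to upgrade the null-space condition to the full-column-rank condition (a), and of the degenerate case $\mu=0$, are good points of care that the paper leaves implicit.
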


\begin{proof}
  The proof for the case C.1 follows directly from Theorem~\ref{thm:unique_optimal_P_IV02} by setting the index $\Jcal=\emptyset$. For the case C.2, it suffices to show that condition (iv) of Theorem~\ref{thm:unique_optimal_P_IV02} is equivalent to condition (2.c) of this proposition.
  For this purpose, it follows from the expression of the matrix $W$ in (\ref{eqn:W_matrix}), (iii) of Lemma~\ref{lem:What_properties}, and a similar argument for Proposition~\ref{prop:unique_optimal_EL1} that (a) for any $z' \ge 0$ with $\mathbf 1^T z' =1$,  there exists  $\wt u' \in \mathbb R^{|\Scal^c|}$ with $\| \wt u' \|_\infty \le 1$ such that $W^T z' = b + E^T_{\Scal^c\bullet} \wt u'$; and (b) for any  $\wt u' \in \mathbb R^{|\Scal^c|}$ with $\| \wt u' \|_\infty \le 1$, there exists $z' \ge 0$ with $\mathbf 1^T z' =1$ such that $W^T z'= b + E^T_{\Scal^c\bullet} \wt u'$.
  Therefore, condition (iv) of Theorem~\ref{thm:unique_optimal_P_IV02} is equivalent to (2.c) of this proposition.
\end{proof}

%
\subsubsection{Extension to a Sum of $\ell_1$-norm based Convex PA Functions} \label{subsubsect:sum_norms}

The results developed for $\| E x \|_1$ can be extended to  a wide range of $\ell_1$-norm based convex PA functions; see Section~\ref{subsect:L1_norm} for the $\ell_1$-norm. In this subsection, we consider a convex PL function of the following form which appears in such applications as the sparse fused LASSO \cite{Rinaldo_AoS09, TibhsiraniSRZK_JRSS05} (cf. Section~\ref{subsect:fused_Lasso}):
\begin{equation} \label{eqn:g_extension}
  g(x) \, = \, \sum^q_{i=1} \| F_i x \|_1,  \qquad \forall \ x \in \mathbb R^N,
\end{equation}
where each matrix $F_i \in \mathbb R^{k_i \times N}$, $i=1, \ldots q$. Letting $k:=k_1+\cdots+ k_q$, define the augmented matrix
$
   F \, := \, \begin{bmatrix} F_1 \\ \vdots \\ F_q \end{bmatrix} \in \mathbb R^{k\times N}.
$
 Then $g(x) = \| F x \|_1, \forall \, x \in \mathbb R^N$. Consequently, all the results developed for $\| E x \|_1$ hold for $g(x) = \| F x \|_1$. Particularly, for a given $x^* \in \mathbb R^N$, let $\Scal_i$ be the support of $F_i x^*$ for each $i$, and $\Scal$ be the support of $F x^*$. Then
$F_{\Scal \bullet} =  \begin{bmatrix} F_{\Scal_1\bullet} \\ \vdots \\ F_{\Scal_q\bullet} \end{bmatrix}$. Similarly, $F_{\Scal^c \bullet} =  \begin{bmatrix} F_{\Scal^c_1\bullet} \\ \vdots \\ F_{\Scal^c_q\bullet} \end{bmatrix}$. Further, let $\wh b:= \mbox{sgn}( (F x^*)_\Scal)$, and $b:=F^T_{\Scal \bullet} \wh b$.
Therefore, Propositions~\ref{prop:unique_optimal_EL1}-\ref{prop:BPDN_II_EL1} are readily extended to the function $g$ in (\ref{eqn:g_extension}) using these matrices and vectors.

%
%

%
\subsection{Solution Uniqueness of Convex Optimization Problems Involving the $\ell_1$-norm} \label{subsect:L1_norm}

Through this subsection, we consider the case where $g$ is the $\ell_1$-norm, i.e., $g(x)=\| x \|_1, \forall \, x \in \mathbb R^N$. By setting $E$ as the $N\times N$ identity matrix and applying the results in Section~\ref{subsect:EL1} to the $\ell_1$-norm, we see that for a given $x^* \in \mathbb R^N$, the index set $\mathcal S$ is the support of $x^*$, the vector $\wh b=  \mbox{sgn}(x^*_\Scal) \in \mathbb R^{|\Scal|}$,   the vector $b=E^T_{\Scal \bullet} \wh b =(b_\Scal, b_{\Scal^c})=(\wh b, 0)\in \mathbb R^N$, $E_{\Scal^c \bullet} = [ 0 \ I_{\Scal^c \Scal^c}] $, and $E_{\Scal^c \bullet} v = v_{\Scal^c}$.

\begin{corollary} \label{coro:unique_solution_L1}
 Let $g(x)=\| x \|_1$, and $x^*$ be a feasible point of the optimization problem (\ref{eqn:P_I02}). Then $x^*$ is the unique minimizer if and only if the following conditions hold:
 \begin{itemize}
   \item [(i)]  The matrix
   $
      \begin{bmatrix} A_{\bullet \Scal} \\ C_{\alpha\Scal}  \end{bmatrix}
   $
   has full column rank; and
   \item [(ii)] There exist $u \in \mathbb R^m$ and  $u' \in \mathbb R^{|\alpha|}_{++}$ such that $ A^T_{\bullet \Scal} u + C^T_{\alpha \Scal} u'  =  \wh b $ and $\big \| A^T_{\bullet \Scal^c} u + C^T_{\alpha \Scal^c} u' \big \|_\infty  <  1$.
 %
  \end{itemize}
\end{corollary}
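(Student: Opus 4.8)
The plan is to derive Corollary~\ref{coro:unique_solution_L1} as a direct specialization of Proposition~\ref{prop:unique_optimal_EL1} to $E=I_N$, using the explicit descriptions of $b$ and $E_{\Scal^c\bullet}$ recorded in the paragraph just above the corollary, namely $b=(\wh b,0)$ in the $(\Scal,\Scal^c)$ coordinate partition (with $\wh b=\mbox{sgn}(x^*_\Scal)$), $E_{\Scal^c\bullet}=[\,0\ \ I_{\Scal^c\Scal^c}\,]$, and $E_{\Scal^c\bullet}v=v_{\Scal^c}$. So it remains only to show that conditions (a)--(b) of Proposition~\ref{prop:unique_optimal_EL1} collapse, in this case, to conditions (i)--(ii) of the corollary.

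First I would handle condition (a). The matrix $\begin{bmatrix} A \\ C_{\alpha\bullet} \\ E_{\Scal^c\bullet}\end{bmatrix}$ has full column rank if and only if its null space is $\{0\}$, and since $E_{\Scal^c\bullet}v=v_{\Scal^c}$ this null space is $\{\,v\in\mathbb R^N \mid Av=0,\ C_{\alpha\bullet}v=0,\ v_{\Scal^c}=0\,\}$. Any such $v$ is supported on $\Scal$, so $Av=A_{\bullet\Scal}v_\Scal$ and $C_{\alpha\bullet}v=C_{\alpha\Scal}v_\Scal$, and the map $v\mapsto v_\Scal$ is a linear bijection from $\{v\mid v_{\Scal^c}=0\}$ onto $\mathbb R^{|\Scal|}$. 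Hence triviality of the null space is equivalent to $\{\,v_\Scal\mid A_{\bullet\Scal}v_\Scal=0,\ C_{\alpha\Scal}v_\Scal=0\,\}=\{0\}$, i.e.\ to $\begin{bmatrix} A_{\bullet\Scal} \\ C_{\alpha\Scal}\end{bmatrix}$ having full column rank, which is condition (i).

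Next I would handle condition (b). Writing the identity $A^Tu+C^T_{\alpha\bullet}u'-E^T_{\Scal^c\bullet}u''=b$ block-wise along $(\Scal,\Scal^c)$: the $\Scal$-block reads $A^T_{\bullet\Scal}u+C^T_{\alpha\Scal}u'=\wh b$ (using $b_\Scal=\wh b$ and that $E^T_{\Scal^c\bullet}u''$ vanishes on $\Scal$), while the $\Scal^c$-block reads $u''=A^T_{\bullet\Scal^c}u+C^T_{\alpha\Scal^c}u'$ (using $b_{\Scal^c}=0$). The latter simply eliminates the auxiliary variable $u''$, turning the constraint $\|u''\|_\infty<1$ into $\|A^T_{\bullet\Scal^c}u+C^T_{\alpha\Scal^c}u'\|_\infty<1$; thus existence of $(u,u',u'')$ with $u'\in\mathbb R^{|\alpha|}_{++}$ and $\|u''\|_\infty<1$ satisfying (b) is equivalent to condition (ii). Combining the two reductions with Proposition~\ref{prop:unique_optimal_EL1} yields the claim.

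There is no genuine obstacle here; the argument is pure bookkeeping with the coordinate split induced by $\mathcal S$. The only points that require care are keeping the signs and the $(\Scal,\Scal^c)$ blocks straight in the linear identity, and, for part (a), performing the full-column-rank reduction correctly by restricting to the subspace of vectors vanishing off $\Scal$ rather than manipulating the rank directly.
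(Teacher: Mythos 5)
Your proposal is correct and matches the paper's own proof in essentially every step: both specialize Proposition~\ref{prop:unique_optimal_EL1} to $E=I_N$, reduce condition (a) to (i) via the identity block on $\Scal^c$ (your null-space phrasing is just a restatement of the paper's invertibility argument), and reduce condition (b) to (ii) by the $(\Scal,\Scal^c)$ block split that eliminates $u''$. No gaps; nothing further needed.
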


\begin{proof}
 We apply Proposition~\ref{prop:unique_optimal_EL1} to this case with $E=I_N$. Since $E_{\Scal^c \bullet} = [ 0 \ I_{\Scal^c \Scal^c}]$, we see that
 \[
 \begin{bmatrix} A \\ C_{\alpha\bullet} \\ E_{\Scal^c \bullet} \end{bmatrix}  \, = \, \begin{bmatrix} A_{\bullet\Scal} & A_{\bullet\Scal^c}  \\ C_{\alpha\Scal} & C_{\alpha\Scal^c} \\ 0 & I_{\Scal^c\Scal^c} \end{bmatrix}.
\]
Since $I_{\Scal^c\Scal^c}$ is invertible, condition (a) of Proposition~\ref{prop:unique_optimal_EL1} holds if and only if condition (i)  holds, i.e.,
$\begin{bmatrix} A_{\bullet \Scal} \\ C_{\alpha\Scal}  \end{bmatrix} $ has full column rank. Using
$E_{\Scal^c \bullet} = [ 0 \ I_{\Scal^c \Scal^c}]$ and $b=(b_\Scal, b_{\Scal^c})$ with $b_\Scal=\wh b$ and $b_{\Scal^c}=0$, we deduce that condition (b) of Proposition~\ref{prop:unique_optimal_EL1} holds if and only if there exist $u \in \mathbb R^m$,  $u' \in \mathbb R^{|\alpha|}_{++}$, and $u'' \in \mathbb R^{|\Scal^c|}$ with $\| u'' \|_\infty<1$  such that
\[
  A^T_{\bullet\Scal} u +  C^T_{\alpha \Scal} u' = \wh b, \quad \mbox{ and } \quad  A^T_{\bullet\Scal^c} u +   C^T_{\alpha \Scal^c} u' = u''.
\]
The latter equation given above is equivalent to  $\big \| A^T_{\bullet \Scal^c} u + C^T_{\alpha \Scal^c} u' \big \|_\infty <1$. This completes the proof.
\end{proof}

The necessary and sufficient conditions for unique optimal solutions to the LASSO-like and the BPDN-I/II-like problems are presented below. Their proofs are omitted since they follow directly from Propositions~\ref{prop:Lasso_EL1}-\ref{prop:BPDN_II_EL1} and particular structure associated with the $\ell_1$-norm shown in Corollary~\ref{coro:unique_solution_L1}.

\begin{corollary} \label{coro:Lasso_L1}
 Let $g(x)=\|x \|_1$, $f:\mathbb R^m \rightarrow \mathbb R$ be a $C^1$  strictly convex function, and $x^* \in \mathbb R^N$ be a feasible point of the problem (\ref{eqn:P_II02}).
 Then $x^*$ is the unique minimizer of (\ref{eqn:P_II02}) if and only if conditions (i)-(ii) of Corollary~\ref{coro:unique_solution_L1} and the following condition hold:
    there exists $\wt u \in \mathbb R^{|\alpha|}_+$ such that
         $
             A^T_{\bullet \Scal} \nabla f(A x^* -y)- C^T_{\alpha\Scal} \, \wt u + \wh b   = 0,
         $
         and
         $
             \big\| A^T_{\bullet \Scal^c} \nabla f(A x^* -y)- C^T_{\alpha\Scal^c} \, \wt u \big\|_\infty   \le 1.  $
\end{corollary}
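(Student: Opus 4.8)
The plan is to derive Corollary~\ref{coro:Lasso_L1} as a direct specialization of Proposition~\ref{prop:Lasso_EL1} to the case $E = I_N$, exactly mirroring how Corollary~\ref{coro:unique_solution_L1} is obtained from Proposition~\ref{prop:unique_optimal_EL1}. By Proposition~\ref{prop:Lasso_EL1}, $x^*$ is the unique minimizer of (\ref{eqn:P_II02}) if and only if conditions (a)-(b) of Proposition~\ref{prop:unique_optimal_EL1} hold together with condition (c) of Proposition~\ref{prop:Lasso_EL1}. Conditions (a)-(b) have already been translated, for the $\ell_1$-norm, into conditions (i)-(ii) of Corollary~\ref{coro:unique_solution_L1} in the proof of that corollary, so the only new work is to translate condition (c).

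First I would recall, as in the setup of Section~\ref{subsect:L1_norm}, that with $E = I_N$ we have $E_{\Scal^c \bullet} = [\,0 \ \ I_{\Scal^c \Scal^c}\,]$, so that $E^T_{\Scal^c\bullet}\wt u' = (0, \wt u')$ in the coordinates indexed by $\Scal$ and $\Scal^c$; also $b = (\wh b, 0)$ with $\wh b = \sgn(x^*_\Scal)$. Then I would split condition (c) of Proposition~\ref{prop:Lasso_EL1}, namely
\[
  A^T \nabla f(A x^* - y) - C^T_{\alpha\bullet}\wt u + b + E^T_{\Scal^c\bullet}\wt u' = 0,
\]
into its $\Scal$- and $\Scal^c$-components. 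The $\Scal$-component reads $A^T_{\bullet\Scal}\nabla f(A x^*-y) - C^T_{\alpha\Scal}\wt u + \wh b = 0$, and the $\Scal^c$-component reads $A^T_{\bullet\Scal^c}\nabla f(A x^*-y) - C^T_{\alpha\Scal^c}\wt u + \wt u' = 0$, i.e. $\wt u' = -\big(A^T_{\bullet\Scal^c}\nabla f(A x^*-y) - C^T_{\alpha\Scal^c}\wt u\big)$. Since $\wt u$ ranges freely over $\mathbb R^{|\alpha|}_+$ and $\wt u'$ ranges freely over $\{\wt u' : \|\wt u'\|_\infty \le 1\}$, eliminating $\wt u'$ shows condition (c) is equivalent to the existence of $\wt u \in \mathbb R^{|\alpha|}_+$ with $A^T_{\bullet\Scal}\nabla f(A x^*-y) - C^T_{\alpha\Scal}\wt u + \wh b = 0$ and $\big\|A^T_{\bullet\Scal^c}\nabla f(A x^*-y) - C^T_{\alpha\Scal^c}\wt u\big\|_\infty \le 1$, which is precisely the extra condition stated in the corollary.

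There is essentially no obstacle here: the argument is a bookkeeping exercise of substituting $E = I_N$ and splitting a vector equation by support, entirely parallel to the proof of Corollary~\ref{coro:unique_solution_L1}. The only point requiring a word of care is that the free parameter $\wt u'$ (constrained only by $\|\wt u'\|_\infty \le 1$) is exactly absorbed by the $\Scal^c$-block of the identity, turning the equality constraint on the $\Scal^c$-component into the displayed infinity-norm inequality; this is why no genuine loss of information occurs in the elimination. Since Propositions~\ref{prop:Lasso_EL1}-\ref{prop:BPDN_II_EL1} and the $\ell_1$-structure in Corollary~\ref{coro:unique_solution_L1} already do all the heavy lifting, the paper's decision to omit the proof is justified, and the sketch above is all that would be needed.
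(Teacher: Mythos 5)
Your proposal is correct and follows exactly the route the paper intends: specializing Proposition~\ref{prop:Lasso_EL1} to $E=I_N$, reusing the translation of conditions (a)--(b) into (i)--(ii) of Corollary~\ref{coro:unique_solution_L1}, and splitting condition (c) into its $\Scal$- and $\Scal^c$-blocks so that eliminating $\wt u'$ yields the stated infinity-norm inequality. Nothing is missing; this is precisely the omitted argument.
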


The next result characterizes a unique optimal solution to the BPDN-I-like problem (\ref{eqn:P_III02}).

\begin{corollary} \label{coro:BPDN_I_L1}
 Let $g(x)=\|x \|_1$,  $f:\mathbb R^m \rightarrow \mathbb R$ be a $C^1$  strictly convex function, and $x^* \in \mathbb R^N$ be a feasible point of the problem (\ref{eqn:P_III02}).
 \begin{itemize}
   \item [C.1] Suppose $f(Ax^* - y)< \varepsilon$. Then $x^*$ is the unique minimizer of  (\ref{eqn:P_III02}) if and only if   $C_{\alpha\Scal}$
   has full column rank   and  there exists $u\in \mathbb R^{|\alpha|}_{++}$ such that $C^T_{\alpha\Scal} u = \wh b$ and $\| C^T_{\alpha\Scal^c}u\|_\infty <1$.

   \item [C.2] Suppose $f(Ax^* - y)=\varepsilon$. Then $x^*$ is the unique minimizer of  (\ref{eqn:P_III02}) if and only if conditions (i)-(ii) of Corollary~\ref{coro:unique_solution_L1} and the following condition hold:
%
     \begin{itemize}
   \item [(2.iii)] If $\mathcal K:=\{ v \in \mathbb R^N \, | \, \big(\nabla f(A x^*-y) \big)^T A v <0, \ C_{\alpha\bullet} v \ge 0 \}$ is nonempty, then there exist a positive real number $\theta$ and $\wt u \in \mathbb R^{|\alpha|}_+$ such that
     $
        \theta \cdot A^T_{\bullet\Scal} \nabla f(A x^*-y)- C^T_{\alpha\Scal}
        \wt u +   \wh b = 0$, and $\|\theta \cdot A^T_{\bullet\Scal^c} \nabla f(A x^*-y)- C^T_{\alpha\Scal^c}  \wt u \|_\infty \le 1$.
       \end{itemize}
 \end{itemize}
\end{corollary}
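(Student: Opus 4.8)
The plan is to obtain both parts by specializing Proposition~\ref{prop:BPDN_I_EL1} to $E=I_N$, in exactly the way Corollary~\ref{coro:unique_solution_L1} specializes Proposition~\ref{prop:unique_optimal_EL1}. First I would record the simplifications noted at the start of Section~\ref{subsect:L1_norm}: when $E=I_N$, the support $\Scal$ of $Ex^*$ is the support of $x^*$, $\wh b=\mbox{sgn}(x^*_\Scal)$, $b=(\wh b,0)$ in the block form indexed by $\Scal$ and $\Scal^c$, $E_{\Scal^c\bullet}=[\,0\ \ I_{\Scal^c\Scal^c}\,]$, hence $E_{\Scal^c\bullet}v=v_{\Scal^c}$ and $E^T_{\Scal^c\bullet}u''=(0,u'')$ for $u''\in\mathbb R^{|\Scal^c|}$. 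All of the reductions below are then just the bookkeeping of the $\Scal$/$\Scal^c$ block decomposition of $A$, $C_{\alpha\bullet}$, and $E_{\Scal^c\bullet}$.

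For Case C.1 ($f(Ax^*-y)<\varepsilon$), I would start from Proposition~\ref{prop:BPDN_I_EL1}(C.1): the matrix $\begin{bmatrix}C_{\alpha\bullet}\\ E_{\Scal^c\bullet}\end{bmatrix}$ has full column rank and there exist $u\in\mathbb R^{|\alpha|}_{++}$, $u'\in\mathbb R^{|\Scal^c|}$ with $\|u'\|_\infty<1$ and $C^T_{\alpha\bullet}u=b+E^T_{\Scal^c\bullet}u'$. Writing $C_{\alpha\bullet}=[\,C_{\alpha\Scal}\ \ C_{\alpha\Scal^c}\,]$ and using $E_{\Scal^c\bullet}=[\,0\ \ I_{\Scal^c\Scal^c}\,]$, invertibility of $I_{\Scal^c\Scal^c}$ shows the rank condition is equivalent to full column rank of $C_{\alpha\Scal}$. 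Splitting $C^T_{\alpha\bullet}u=b+E^T_{\Scal^c\bullet}u'$ into its $\Scal$-block and $\Scal^c$-block and using $b_\Scal=\wh b$, $b_{\Scal^c}=0$ gives $C^T_{\alpha\Scal}u=\wh b$ and $C^T_{\alpha\Scal^c}u=u'$; combined with $\|u'\|_\infty<1$ this is exactly $\|C^T_{\alpha\Scal^c}u\|_\infty<1$, which is the asserted form.

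For Case C.2 ($f(Ax^*-y)=\varepsilon$), conditions (a)--(b) of Proposition~\ref{prop:unique_optimal_EL1} reduce to conditions (i)--(ii) of Corollary~\ref{coro:unique_solution_L1} by the identical block computation already carried out in the proof of that corollary (the sign in $-E^T_{\Scal^c\bullet}u''$ turns the $\Scal^c$-block into $u''=A^T_{\bullet\Scal^c}u+C^T_{\alpha\Scal^c}u'$, so $\|u''\|_\infty<1$ becomes the stated strict $\infty$-norm bound). It then remains to reduce condition (2.c) of Proposition~\ref{prop:BPDN_I_EL1} to condition (2.iii) of this corollary; the cone $\mathcal K$ is unchanged, as it does not involve $E$. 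When $\mathcal K\ne\emptyset$, condition (2.c) asks for $\theta>0$, $\wt u\in\mathbb R^{|\alpha|}_+$, $\wt u'\in\mathbb R^{|\Scal^c|}$ with $\|\wt u'\|_\infty\le1$ and $\theta\,A^T\nabla f(Ax^*-y)-C^T_{\alpha\bullet}\wt u+b+E^T_{\Scal^c\bullet}\wt u'=0$. Splitting into the $\Scal$- and $\Scal^c$-blocks of $\mathbb R^N$, the $\Scal$-block is $\theta\,A^T_{\bullet\Scal}\nabla f(Ax^*-y)-C^T_{\alpha\Scal}\wt u+\wh b=0$, and the $\Scal^c$-block is $\theta\,A^T_{\bullet\Scal^c}\nabla f(Ax^*-y)-C^T_{\alpha\Scal^c}\wt u+\wt u'=0$, i.e.\ $\wt u'=C^T_{\alpha\Scal^c}\wt u-\theta\,A^T_{\bullet\Scal^c}\nabla f(Ax^*-y)$; the constraint $\|\wt u'\|_\infty\le1$ then becomes $\|\theta\,A^T_{\bullet\Scal^c}\nabla f(Ax^*-y)-C^T_{\alpha\Scal^c}\wt u\|_\infty\le1$, and eliminating $\wt u'$ leaves precisely condition (2.iii).

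There is no genuine obstacle here; the argument is entirely the index bookkeeping of the $\Scal$/$\Scal^c$ block decomposition together with the single observation that, for $E=I_N$, the $\Scal^c$-block of $b$ vanishes, so the box constraint on the auxiliary variable ($u'$ in C.1, $\wt u'$ in (2.iii)) transfers verbatim to a box constraint on $C^T_{\alpha\Scal^c}u$ (resp.\ on the residual in (2.iii)). The only point requiring a moment's care is the degenerate case $\Scal=\{1,\dots,N\}$ (full support of $x^*$): then $\Scal^c=\emptyset$, the rank and $\infty$-norm conditions over $\Scal^c$ are vacuous, and every displayed relation is read over empty index sets, which is consistent.
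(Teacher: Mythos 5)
Your proposal is correct and follows exactly the route the paper intends: the paper omits this proof, stating that it follows directly from Proposition~\ref{prop:BPDN_I_EL1} together with the $\ell_1$-specific structure recorded in Corollary~\ref{coro:unique_solution_L1}, and your $\Scal$/$\Scal^c$ block bookkeeping with $E=I_N$ (rank reduction to $C_{\alpha\Scal}$, elimination of $u'$ and $\wt u'$ into the strict and non-strict $\infty$-norm bounds) is precisely that specialization.
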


The last result of this subsection pertains to the BPDN-II-like problem  defined below, which is a special case of the problem (\ref{eqn:P_IV_multiple02}) with $r=1$, $g(x)=\| x\|_1$, and a positive real number $\eta>0$:

\begin{equation} \label{eqn:constrained_BPDN_II_L1}  
\min_{  x \in \mathbb{R}^N} \
 f(Ax-y) \ \
\quad \text{subject to}
\quad   g(x)\le \eta, \ \mbox{ and } \ \ C  x \ge   d.
\end{equation}

\begin{corollary} \label{coro:BPDN_II_L1}
Let $g(x)=\|x \|_1$, $f:\mathbb R^m \rightarrow \mathbb R$ be a $C^1$  strictly convex function, and $x^* \in \mathbb R^N$ be a feasible point of the problem (\ref{eqn:constrained_BPDN_II_L1}).
\begin{itemize}
   \item [C.1] Suppose $g(x^*)< \eta$. Then $x^*$ is the unique minimizer of  (\ref{eqn:constrained_BPDN_II_L1}) if and only if the associated conditions given in C.1 of Proposition~\ref{prop:BPDN_II_EL1} hold;
      %
 %
   \item [C.2] Suppose $g(x^*)= \eta$. Then $x^*$ is the unique minimizer of  (\ref{eqn:constrained_BPDN_II_L1}) if and only if conditions (i)-(ii) of Corollary~\ref{coro:unique_solution_L1} and the following condition hold:
 \begin{itemize}
%
     \item [(2.iii)]  There exist $\wt u \in \mathbb R^{|\alpha|}_+$ and $\mu\in \mathbb R_+$
     %
     %
     such that
         $A^T_{\bullet\Scal} \nabla f(A x^*-y) - C^T_{\alpha\Scal} \wt u  + \mu \cdot \wh b=0$, and
          $\|A^T_{\bullet\Scal^c} \nabla f(A x^*-y) - C^T_{\alpha\Scal} \wt u \|_\infty \le \mu$.
 \end{itemize}
 \end{itemize}
\end{corollary}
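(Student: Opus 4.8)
The plan is to obtain this corollary as a direct specialization of Proposition~\ref{prop:BPDN_II_EL1} to the choice $E = I_N$, using the explicit $\ell_1$-structure recorded just before Corollary~\ref{coro:unique_solution_L1}: namely $b = (b_\Scal, b_{\Scal^c}) = (\wh b, 0) \in \mathbb R^N$ and $E_{\Scal^c\bullet} v = v_{\Scal^c}$, so that $E^T_{\Scal^c\bullet} w$ is the vector equal to $w$ on $\Scal^c$ and to $0$ on $\Scal$. For Case C.1 I would simply observe that the conditions stated in C.1 of Proposition~\ref{prop:BPDN_II_EL1} involve only $A$ and $C_{\alpha\bullet}$ and make no reference to $E$; hence, since $g(x) = \|x\|_1 = \|Ex\|_1$ with $E = I_N$, they transfer verbatim and nothing further is needed.

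For Case C.2 I would first invoke the equivalence, already established in the proof of Corollary~\ref{coro:unique_solution_L1}, between conditions (a)--(b) of Proposition~\ref{prop:unique_optimal_EL1} with $E = I_N$ and conditions (i)--(ii) of Corollary~\ref{coro:unique_solution_L1}. It then remains only to rewrite condition (2.c) of Proposition~\ref{prop:BPDN_II_EL1} in the present notation. Substituting $b = (\wh b, 0)$ and $E^T_{\Scal^c\bullet}\wt u' = (0, \wt u')$, the governing equation
\[
 A^T \nabla f(Ax^* - y) - C^T_{\alpha\bullet}\wt u + \mu\bigl(b + E^T_{\Scal^c\bullet}\wt u'\bigr) = 0
\]
splits along the $(\Scal, \Scal^c)$ partition into the block equation $A^T_{\bullet\Scal}\nabla f(Ax^*-y) - C^T_{\alpha\Scal}\wt u + \mu\wh b = 0$, which is exactly the equality in (2.iii), and the block equation $A^T_{\bullet\Scal^c}\nabla f(Ax^*-y) - C^T_{\alpha\Scal^c}\wt u + \mu\wt u' = 0$. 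I would then eliminate $\wt u'$: when $\mu > 0$, solving for $\wt u'$ converts the constraint $\|\wt u'\|_\infty \le 1$ into the norm bound $\|A^T_{\bullet\Scal^c}\nabla f(Ax^*-y) - C^T_{\alpha\Scal^c}\wt u\|_\infty \le \mu$ appearing in (2.iii); conversely, given such a bound with $\mu>0$, the same formula defines an admissible $\wt u'$. This turns (2.c) into (2.iii) and back, completing the proof together with the C.1 observation.

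The argument is essentially bookkeeping, so I do not anticipate a genuine obstacle; the one point requiring a little care is the degenerate multiplier value $\mu = 0$ when passing between the norm-bound formulation of (2.iii) and the single vector equation in (2.c). I would dispatch it by noting that for $\mu = 0$ the inequality in (2.iii) forces $A^T_{\bullet\Scal^c}\nabla f(Ax^*-y) = C^T_{\alpha\Scal^c}\wt u$, so the $\Scal^c$-block equation is satisfied with the choice $\wt u' = 0$, which still obeys $\|\wt u'\|_\infty \le 1$; the reverse implication for $\mu=0$ is immediate.
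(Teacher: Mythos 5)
Your proof is correct and is exactly the route the paper intends but omits: specializing Proposition~\ref{prop:BPDN_II_EL1} to $E=I_N$, using $b=(\wh b,0)$ and $E_{\Scal^c\bullet}v=v_{\Scal^c}$ from Corollary~\ref{coro:unique_solution_L1}, splitting (2.c) along the $(\Scal,\Scal^c)$ blocks, and eliminating $\wt u'$ with the $\mu=0$ case treated separately. Your block computation also shows the second term in the stated condition (2.iii) should read $C^T_{\alpha\Scal^c}\wt u$ rather than $C^T_{\alpha\Scal}\wt u$ (a typo in the corollary as printed), which your argument handles correctly.
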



%
\section{Applications to $\ell_1$-norm Recovery Problems: Examples and Comparison with Related Results} \label{sect:application_example_comparison}

In this section, we apply the results developed in the previous section to specific $\ell_1$-norm recovery problems.
%
%
We show that the general framework established in this paper not only recovers all the known results without imposing restrictive assumptions but also leads to many new results, e.g., the sparse fused LASSO subject to polyhedral constraints (cf. Corollary~\ref{coro:constrained_fused_Lasso}), basis pursuit subject to the monotone cone constraint and the Dantzig selector (cf. Section~\ref{subsect:inequality_constraint}). Besides, we compare our results with the existing work and demonstrate the broad applicability and efficiency of the general results of this paper.

%
%
%

By setting $C=0$ and $d=0$ in Propositions~\ref{prop:unique_optimal_EL1}-\ref{prop:BPDN_II_EL1} and Corollaries~\ref{coro:unique_solution_L1}-\ref{coro:BPDN_II_L1}, we obtain solution uniqueness conditions for $\ell_1$-norm optimization problems without a linear inequality constraint for either $g(x)=\| E x \|_1$ or $g(x)=\| x \|_1$.  These results give rise to the same uniqueness conditions recently developed for $g(x)=\| E x \|_1$ in   \cite{Zhang_Yan_Yun_ACM16} and  $g(x)=\| x \|_1$ in \cite{ZhangYC_JOTA15} respectively. Also see Section~\ref{subsubsect:comparision_for_BP}  for a detailed comparison with the results on basis-pursuit-like problems in \cite{Gilbert_JOTA17}.

%
\subsection{Applications to Basis Pursuit Denoising I and Comparison with Related Results} \label{subsect:BPDN_I_recovery_comp}

Let $f:\mathbb R^m \rightarrow \mathbb R$ be a $C^1$ strictly convex function, and $g(x)=\| E x \|_1$ or $g(x)=\|x \|_1$.
For the BPDN-I-like problem (\ref{eqn:P_III02})  without a linear inequality constraint $C x \ge d$, it is shown in the two papers \cite{ZhangYC_JOTA15, Zhang_Yan_Yun_ACM16} that $A x -y$ is constant on the solution set $\mathcal X$ and  that $f(A x - y)=\varepsilon$ for all $x \in \mathcal X$ if $0\notin \mathcal X$.
%
%
%
By a similar argument for \cite[Lemma 4.2(3)]{ZhangYC_JOTA15}, one can show that if a linear inequality constraint is involved but $0\in \mathcal P:=\{ x \, | \, Cx \ge d \}$ (or equivalently $d \le 0$), then the same results hold; particularly, $A x -y$ is also constant on the solution set. This case is  especially interesting since $\mathcal P$ is often a polyhedral cone in applications.
Therefore, the case C.1 in Proposition~\ref{prop:BPDN_I_EL1} and Corollary~\ref{coro:BPDN_I_L1} can be ignored in these scenarios. Nonetheless, when a general linear inequality constraint is considered, the case C.1 is needed, since $Ax - y$ is {\em not} always constant on the solution set as demonstrated by
the following example.

\begin{example} \rm
Consider the following problem on $\mathbb R^2$:
\begin{equation*} 
\begin{aligned}
& \underset{x=(x_1, x_2) \in \mathbb R^2}{\text{min}}
& & \| x \|_1 \quad & \text{subject to}
& & \frac{x_1^2}{9}+\frac{x_2^2}{16} \le 1 \ \mbox{ and } \ x_1+x_2\ge 2.
\end{aligned}
\end{equation*}
This problem is a special case of the BPDN-I-like problem in (\ref{eqn:P_III02}), where $g(x)=\| x \|_1$, $\varepsilon=1$, $f(Ax - y)=\| Ax \|^2_2$ with $A=\mbox{diag}(1/3,1/4)$ and $y=0$,  $C=[1 \ 1]$, and $d=2$.
It is easy to show via a geometric argument that the solution set is $\{ x=(x_1,x_2) \, | \, x_1+x_2=2 \, \mbox{ and } \, x \ge 0 \}$ on which $Ax-y$ is varying.
%
%
\end{example}

%
%

%
\subsection{Applications to Basis Pursuit Denoising II and Comparison with Related Results} \label{subsect:BPDN_II_recovery_comp}

We discuss the results on the BPDN-II-like problem subject to one $\ell_1$-norm based constraint given in (\ref{eqn:constrained_BPDN_II_EL1}) or (\ref{eqn:constrained_BPDN_II_L1}) and compare them with the previous results. The paper \cite{ZhangYC_JOTA15} studies this problem of the following form without the linear inequality constraint $C x \ge d$:
\begin{equation} \label{eqn:BPDN2_paper}
 \min_{x \in \mathbb R^N} \ f(Ax -y) \qquad  \mbox{ subject to } \qquad \|x \|_1\le \tau,
\end{equation}
where $f$ is a strictly convex function, and the constant $\tau$ is assumed to satisfy $0<\tau \le \inf\{ \| x \|_1 \, | \, x \in \mbox{argmin}_{z\in \mathbb R^N} f(Az-y) \}$ \cite[Assumption 2.3]{ZhangYC_JOTA15}. It is claimed in \cite[Lemma 4.2(4)]{ZhangYC_JOTA15} that under this assumption on $\tau$, $\| x \|_1=\tau$ on the (nonempty) solution set of (\ref{eqn:BPDN2_paper}), which is a key step to derive the solution uniqueness conditions in \cite{ZhangYC_JOTA15}. However, the proof for this claim given in \cite[Lemma 4.2(4)]{ZhangYC_JOTA15} is invalid. In what follows, we provide a remedy proof in a general setting.

\begin{lemma}
Let $h:\mathbb R^N\rightarrow \mathbb R$ be a convex function and $g:\mathbb R^N \rightarrow \mathbb R$ be a continuous function
%
%
such that $\min_{x \in \mathbb R^N} h(x)$ has a nonempty solution set $\mathcal H$, and $\gamma:=\inf\{g(x) \, | \, x \in \mathcal H \}>-\infty$. For a given $\tau \in \mathbb R$ with $\tau \le \gamma$, suppose the following optimization problem attains a nonempty solution set $\mathcal H_P$:
\[
   (P): \qquad \min_{x \in \mathbb R^N} h(x) \quad \text{subject to} \quad g(x) \le \tau.
\]
Then $g(x) = \tau$ for any $x \in \mathcal H_P$. In particular, if $h(x)=f(A x -y)$ and $g(x)=\| x\|_1$, where $f$ is strictly convex, then $\|x\|_1=\tau$ on $\mathcal H_P$.
\end{lemma}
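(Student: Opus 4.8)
The plan is to argue by contradiction on a fixed $x_P\in\mathcal H_P$. Feasibility of $x_P$ for $(P)$ already gives $g(x_P)\le\tau$, so the only alternative to the claim is $g(x_P)<\tau$, and this is what I would rule out. The first step is to observe that $x_P$ cannot be an unconstrained minimizer of $h$: writing $h^*:=\min_{x\in\mathbb R^N}h(x)$, every $x\in\mathcal H$ satisfies $g(x)\ge\gamma\ge\tau>g(x_P)$, so $x_P\notin\mathcal H$ and hence $h(x_P)>h^*$.

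Next I would pick any $x_0\in\mathcal H$ (nonempty by hypothesis) and move along the segment $x_t:=(1-t)x_P+t\,x_0$ for $t\in[0,1]$. Convexity of $h$ yields $h(x_t)\le(1-t)h(x_P)+t\,h^*<h(x_P)$ for every $t\in(0,1]$, where the strict inequality uses $h(x_P)>h^*$. Independently, continuity of $g$ gives $g(x_t)\to g(x_P)<\tau$ as $t\downarrow 0$, so there is some $t_*\in(0,1]$ with $g(x_{t_*})<\tau$; then $x_{t_*}$ is feasible for $(P)$ while $h(x_{t_*})<h(x_P)$, contradicting $x_P\in\mathcal H_P$. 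Therefore $g(x_P)=\tau$, and since $x_P\in\mathcal H_P$ was arbitrary, $g\equiv\tau$ on $\mathcal H_P$.

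For the ``in particular'' assertion I would just check that the hypotheses of the general statement hold with $h(x)=f(Ax-y)$ and $g(x)=\|x\|_1$: convexity of $f$ together with the affinity of $x\mapsto Ax-y$ makes $h$ convex, $g$ is continuous, and $\gamma\ge 0>-\infty$ automatically; the remaining hypotheses ($\mathcal H\neq\emptyset$, $\mathcal H_P\neq\emptyset$, $\tau\le\gamma$) are precisely those assumed in \cite{ZhangYC_JOTA15}. Hence the general result applies and gives $\|x\|_1=\tau$ on $\mathcal H_P$. It is worth noting that strict convexity of $f$ plays no role here; only convexity of $h$ is used, which is why the lemma is stated for a general convex $h$ and a general continuous $g$.

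The argument is short and has no genuinely hard step; the one point that needs care is the logical ordering of the two estimates along the segment. The $h$-decrease must be \emph{strict}, which is why one must first establish $x_P\notin\mathcal H$ via the chain $g(x_P)<\tau\le\gamma\le g(x)$ for $x\in\mathcal H$; and continuity of $g$ is needed only locally near $t=0$, so no control on $g(x_0)$ (which may be large) is required.
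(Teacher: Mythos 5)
Your proof is correct and follows essentially the same argument as the paper: contradiction via $g(x_P)<\tau\le\gamma$ forcing $x_P\notin\mathcal H$, then a small convex combination with a point of $\mathcal H$ that stays feasible by continuity of $g$ and strictly decreases $h$ by convexity. The only (harmless) addition is your explicit remark that strict convexity of $f$ is not needed, which the paper's general statement already reflects.
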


\begin{proof}
We prove this lemma by contradiction. Suppose there exists an optimal solution $\wh x \in \mathcal H_P$ such that $g(\wh x) < \tau$. Then $g(\wh x) < \gamma := \inf\{g(x) \, | \, x \in \mathcal H \}$, which implies that $\wh x \not\in \mathcal H$. For a fixed $x'\in \mathcal H$, we thus have $h(\wh x) > h(x')$.
%
%
It follows from the continuity of $g$ at $\wh x$ and $g(\wh x)<\tau$ that there exists a sufficiently small $\bar \lambda \in (0, 1)$ such that
$
  g\big((1-\bar \lambda) \wh x + \bar \lambda x' \big) \, = \, g\big(\wh x + \bar \lambda (x' - \wh x) \big) \, < \, \tau.
$
Hence, $z:=(1-\bar \lambda) \wh x + \bar\lambda x'$ is feasible to the optimization problem $(P)$.
 In view of $h(\wh x) > h(x')$, $\bar \lambda >0$, and the convexity of $h$, we further have
\[
   h(z) \, = \, h \big((1-\bar \lambda) \wh x + \bar \lambda x' \big) \, \le \, (1-\bar \lambda) h(\wh x) + \bar \lambda h(x') \, < \, h(\wh x).
\]
This shows that $\wh x$ is not an optimal solution to $(P)$, contradiction. Therefore, $g(x)=\tau, \forall \, x \in \mathcal H_P$.
\end{proof}

Compared with the results for the problem (\ref{eqn:BPDN2_paper}) developed in \cite{ZhangYC_JOTA15}, the present paper  establishes the solution uniqueness conditions not only without imposing a restriction on the parameter $\tau$ but also taking a general linear inequality constraint as well as multiple convex PA functions based constraints into account; see Theorem~\ref{thm:unique_optimal_P_IV02}, Proposition~\ref{prop:BPDN_II_EL1}, and Corollary~\ref{coro:BPDN_II_L1}. This generalization is especially important because, as shown in the following example, the claim that $\|x\|_1$ is constant on the solution set fails when a linear inequality constraint is imposed.


%
%


\begin{example} \rm
Consider the problem in $\mathbb R^2$:
\begin{equation*} 
\begin{aligned}
 & \underset{x=(x_1, x_2) \in \mathbb R^2}{\text{min}}
& & (x_1+x_2-2)^2 & \quad
& \text{subject to}
& & \| x \|_1\le 1 \ \mbox{ and } \ -x_1-x_2\ge 0.
\end{aligned}
\end{equation*}
This problem is a special case of (\ref{eqn:constrained_BPDN_II_L1}), where $f(\cdot)=| \cdot |^2$, $A=[1 \, \ 1]$, $y=2$, $\eta=1$, $C=[-1 \ -1]$, and $d=0$. It is noticed that the solution set of $\min_{x\in \mathbb{R}^2} \ (x_1+x_2-2)^2$ is the line in $\mathbb R^2$ defined by $x_1+x_2=2$, and $\inf\{ \| x\|_1 \, | \  x_1+x_2=2 \}=2$. Therefore, the bound $\eta=1$ satisfies the specified assumption.
However, by a simple geometric argument, we deduce that the solution set is $\{x=(x_1,x_2) \, | \, x_1+x_2=0,  \ -1/2\le x_1\le 1/2\}$. Clearly, $\| x\|_1$ is not constant on this set.

%
%
\end{example}





%
%
%
%
\subsection{Applications to Multiple $\ell_1$-norm based Convex PA Functions} \label{subsect:fused_Lasso}

%
%
%
%

Multiple $\ell_1$-norm based functions are involved in several sparse optimization problems arising from statistics, image processing, and machine learning. One notable example is the so-called sparse fused LASSO \cite{Rinaldo_AoS09, TibhsiraniSRZK_JRSS05} which takes the following form with the positive  penalty parameters $\lambda_1$ and $\lambda_2$:
\begin{equation} \label{eqn:fused_Lasso}
   \min_{  x \in \mathbb{R}^N} \  \| Ax - y\|^2_2 + \lambda_1 \cdot \| x \|_1 + \lambda_2 \cdot \| D_1 x \|_1,
\end{equation}
where $A \in \mathbb R^{m\times N}$ and $y\in \mathbb R^m$ are given, and $D_1$ is the first-order difference matrix given by
\begin{equation} \label{eqn:D1_matrix}
  D_1 \, := \,
\begin{bmatrix}
-1 & 1 & &       &      \\
  & -1  &1&      &      \\
     & &\ddots  &\ddots   &\\
    &  &  &-1 &1 \\
  \end{bmatrix} \in \mathbb R^{(N-1)\times N}.
\end{equation}
Here $\| D_1 x \|_1$ characterizes the total variation of $x$. Another version of the sparse fused LASSO is
\begin{equation} \label{eqn:fused_BPDN02}
   \min_{  x \in \mathbb{R}^N} \  \| Ax - y\|^2_2 \quad \mbox{ subject to } \quad \| x \|_1 \le \eta_1, \quad \mbox{ and } \quad \| D_1 x \|_1 \le \eta_2,
\end{equation}
where $\eta_1, \eta_2>0$. These two problems are closely related to the generalized LASSO \cite{TibshiraniRJ_thesis11, Tibshirani_Taylor_Aos12}.

It is observed via the discussions in Section~\ref{subsubsect:sum_norms} that the sparse fused LASSO in (\ref{eqn:fused_Lasso}) can be formulated as: $\min_{  x \in \mathbb{R}^N} \  f(Ax -y) + g(x)$, where $f(\cdot)=\| \cdot \|^2_2$, and $g(x)=\| E x \|_1$ with $E=\begin{bmatrix} \lambda_1 \cdot I_N \\ \lambda_2 \cdot D_1 \end{bmatrix} \in \mathbb R^{(2N-1)\times N}$. Therefore, its solution uniqueness is determined by Theorem~\ref{thm:unique_optimal_P_II02} or Proposition~\ref{prop:Lasso_EL1}. Furthermore, we observe that the problem (\ref{eqn:fused_BPDN02}) can be treated as the BPDN-II-like problem (\ref{eqn:P_IV_multiple02}) subject to two $\ell_1$-norm based constraints, and its solution uniqueness conditions follow from Theorem~\ref{thm:unique_optimal_P_IV02}. These observations allow us to easily incorporate linear inequality constraints into the two sparse fused LASSO models. For illustration, we show the solution uniqueness conditions below for the sparse fused LASSO in (\ref{eqn:fused_Lasso}) subject to the nonnegative constraint, i.e., $x \in \mathbb R^N_+=\{x \, | \, C x \ge d \}$ with $C=I_N$ and $d=0$.

\begin{corollary} \label{coro:constrained_fused_Lasso}
 Consider the sparse fused LASSO in (\ref{eqn:fused_Lasso}) subject to the nonnegative constraint. For a feasible point $x^*$ and the matrix $E$ defined above, let $\mathcal S$ be the support of $E x^*$, and  $b:=E^T_{\Scal \bullet} \wh b$ with $\wh b:=\mbox{sgn}( (Ex^*)_\Scal )$. Then $x^*$ is the unique minimizer if and only if the following conditions hold:
 \begin{itemize}
   \item [(i)]
   $
      \begin{bmatrix} A_{\bullet \Scal_I} \\ (D_1)_{\Scal^c_D \Scal_I} \end{bmatrix}
   $
   has full column rank, where $\Scal_I$ is the support of $x^*$, and $\Scal_D$ is the support of $D_1 x^*$;
   \item [(ii)] There exist $u \in \mathbb R^m$,  $u' \in \mathbb R^{|\Scal^c_I|}_{++}$, and $u'' \in \mathbb R^{|\Scal^c|}$ with $\| u'' \|_\infty<1$  such that
$\begin{pmatrix} A^T_{\bullet\Scal_I} u \\  A^T_{\bullet\Scal^c_I} u + u' \end{pmatrix}  - E^T_{\Scal^c\bullet} u'' = b$;
   \item [(iii)] There exist $\wt u \in \mathbb R^{|\Scal^c_I|}_+$ and  $\wt u' \in \mathbb R^{|\Scal^c|}$ with $\| \wt u' \|_\infty \le 1$ such that
        $
           2A^T (A x^* -y) - I^T_{\Scal^c_I \bullet} \wt u  
              + b + E^T_{\Scal^c\bullet} \wt u' \, = \, 0.
         $
 \end{itemize}
\end{corollary}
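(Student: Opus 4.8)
The plan is to obtain Corollary~\ref{coro:constrained_fused_Lasso} as a direct specialization of Proposition~\ref{prop:Lasso_EL1}. As noted in the text preceding the corollary, the sparse fused LASSO (\ref{eqn:fused_Lasso}) subject to $x\in\mathbb R^N_+$ is precisely the LASSO-like problem (\ref{eqn:P_II02}) with $f(\cdot)=\|\cdot\|^2_2$, $g(x)=\|Ex\|_1$ for $E=\begin{bmatrix}\lambda_1\cdot I_N\\ \lambda_2\cdot D_1\end{bmatrix}$, and $Cx\ge d$ given by $C=I_N$, $d=0$. Since this $f$ is $C^1$ and strictly convex, Proposition~\ref{prop:Lasso_EL1} applies to the feasible point $x^*$, and it remains to rewrite its conditions (a)--(c) in terms of the fused-LASSO data.

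First I would identify the index sets. With $C=I_N$ and $d=0$, the active set $\alpha=\{i\,|\,(Cx^*-d)_i=0\}$ equals $\Scal^c_I$, the complement of the support $\Scal_I$ of $x^*$, so $C_{\alpha\bullet}=(I_N)_{\Scal^c_I\bullet}$. Moreover, because $\lambda_1,\lambda_2>0$, an entry of $Ex^*=(\lambda_1 x^*,\ \lambda_2 D_1 x^*)$ is nonzero exactly when the corresponding entry of $x^*$ (in the first block) or of $D_1 x^*$ (in the second block) is nonzero; hence the support $\Scal$ of $Ex^*$ decomposes into $\Scal_I$ in the first $N$ coordinates together with $\Scal_D$ in the last $N-1$ coordinates, and correspondingly $E_{\Scal^c\bullet}=\begin{bmatrix}\lambda_1\,(I_N)_{\Scal^c_I\bullet}\\ \lambda_2\,(D_1)_{\Scal^c_D\bullet}\end{bmatrix}$. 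This bookkeeping between the two coordinate blocks of $Ex^*$ is the only mildly delicate point; everything afterward is substitution.

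Then I would translate the three conditions. For condition (a) of Proposition~\ref{prop:Lasso_EL1}, the relevant matrix is the one obtained by stacking $A$, $(I_N)_{\Scal^c_I\bullet}$, $\lambda_1(I_N)_{\Scal^c_I\bullet}$, and $\lambda_2(D_1)_{\Scal^c_D\bullet}$; any $v$ in its null space must vanish on $\Scal^c_I$ (the scaled identity block being redundant), so writing $v=(v_{\Scal_I},0)$ the remaining equations reduce to $A_{\bullet\Scal_I}v_{\Scal_I}=0$ and $(D_1)_{\Scal^c_D\Scal_I}v_{\Scal_I}=0$ (using $\lambda_2\ne0$), whence full column rank is equivalent to condition (i). For condition (b), I would expand $C^T_{\alpha\bullet}u'=(I_N)^T_{\Scal^c_I\bullet}u'$ as the vector equal to $u'$ on $\Scal^c_I$ and $0$ on $\Scal_I$, so that $A^Tu+C^T_{\alpha\bullet}u'$ has blocks $A^T_{\bullet\Scal_I}u$ and $A^T_{\bullet\Scal^c_I}u+u'$; this is exactly condition (ii) (with $u''\in\mathbb R^{|\Scal^c|}$, $\|u''\|_\infty<1$, and the term $E^T_{\Scal^c\bullet}u''$ left as is). For condition (c), $\nabla f(z)=2z$ gives $\nabla f(Ax^*-y)=2(Ax^*-y)$, and substituting $C^T_{\alpha\bullet}\wt u=I^T_{\Scal^c_I\bullet}\wt u$ turns the equation into $2A^T(Ax^*-y)-I^T_{\Scal^c_I\bullet}\wt u+b+E^T_{\Scal^c\bullet}\wt u'=0$, which is condition (iii). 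No step uses anything beyond Proposition~\ref{prop:Lasso_EL1}, the definition of $\alpha$, and elementary linear algebra, so the proof is essentially routine once the block indexing is set up.
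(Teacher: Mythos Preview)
Your proposal is correct and follows essentially the same route as the paper's own proof: specialize Proposition~\ref{prop:Lasso_EL1} with $C=I_N$, $d=0$, identify $\alpha=\Scal^c_I$ and the block structure of $E_{\Scal^c\bullet}$, and then reduce conditions (a)--(c) to (i)--(iii) by elementary column-block manipulations. In fact you spell out slightly more detail than the paper does (it simply displays the block form of the stacked matrix and says the rest ``can be verified via a straightforward calculation'').
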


\begin{proof}
 Based on the definitions of $E$, $\mathcal S$, $\mathcal S_I$, and $\Scal_D$ as well as $C=I_N$, we have  $E_{\Scal^c \bullet}=\begin{bmatrix} \lambda_1 \cdot I_{\Scal^c_I \bullet} \\ \lambda_2 \cdot (D_1)_{\Scal^c_D\bullet} \end{bmatrix}$, $\alpha=\Scal^c_I$,  and $C_{\alpha\bullet}=I_{\Scal^c_I\bullet}$. Hence, we have
 $
   \begin{bmatrix} A \\ C_{\alpha\bullet} \\ E_{\Scal^c \bullet} \end{bmatrix}  \, = \, \begin{bmatrix} A_{\bullet\Scal_I} & A_{\bullet\Scal^c_I}  \\ 0 & I_{\Scal^c_I\Scal^c_I} \\ 0 & \lambda_1 I_{\Scal^c_I\Scal^c_I} \\ \lambda_2 (D_1)_{\Scal^c_D\Scal_I} & \lambda_2 (D_1)_{\Scal^c_D\Scal^c_I} \end{bmatrix}.
 $
 In view of this result, Proposition~\ref{prop:Lasso_EL1}, and condition (a) of Proposition~\ref{prop:unique_optimal_EL1}, it can be verified via a straightforward calculation that the corollary holds.
\end{proof}

%

%
\subsection{Applications to $\ell_1$-norm Recovery subject to Linear Inequality Constraints and Comparison with Related Results} \label{subsect:inequality_constraint}

The general framework developed in Sections~\ref{sect:general_results}-\ref{sect:Applications} provides a great flexibility to incorporate various linear inequality constraints into recovery problems. We shows this advantage via several examples below.

An important linear inequality constraint that has received considerable attention in applications is the nonnegative constraint  \cite{FoucartKoslicki_ISPL14, IDP_ISP17, WangXTang_TSP11, Zhao_JORSC14}, i.e., $x \in \mathbb R^n_+$. As shown in Section~\ref{subsect:fused_Lasso}, we have $C=I_N$ and $d=0$ so that $\mathcal P =\{ x \in \mathbb R^N \, | \, C x \ge d\}=\mathbb R^N_+$. We first consider the basis pursuit with $g(x)=\| x \|_1$, and recover the solution uniqueness conditions established in \cite[Theorem 2.7]{Zhao_JORSC14}.

\begin{corollary} \label{coro:BP_L1_nonnegative}
 Let $C=I_N$ and $d=0$ so that $\mathcal P=\mathbb R^N_+$. Then a feasible point $x^*$ is the unique minimizer of (\ref{eqn:P_I02}) with $\mathcal S$ being the support of $x^*$ if and only if the following conditions hold:
  \begin{itemize}
     \item [(i)] The columns of $A_{\bullet \Scal}$ are linearly independent;  and
     \item [(ii)] There exists $u \in \mathbb R^m$ such that $A^T_{\bullet \Scal} u = \mathbf 1$ and $A^T_{\bullet\Scal^c} u<\mathbf 1$.
  \end{itemize}
\end{corollary}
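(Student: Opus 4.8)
The plan is to obtain Corollary~\ref{coro:BP_L1_nonnegative} as a direct specialization of Corollary~\ref{coro:unique_solution_L1} to the data $C = I_N$, $d = 0$, followed by an elementary manipulation of the slack variable. First I would compute the active set: since $(Cx^*-d)_i = x^*_i$, the index set $\alpha$ of active inequality constraints at a feasible $x^*$ is $\alpha = \{\, i : x^*_i = 0 \,\} = \Scal^c$, the complement of the support $\Scal$ of $x^*$. Hence $C_{\alpha\Scal} = (I_N)_{\Scal^c\Scal} = 0$ (rows and columns drawn from disjoint index sets) and $C_{\alpha\Scal^c} = (I_N)_{\Scal^c\Scal^c} = I_{|\Scal^c|}$, so that $C^T_{\alpha\Scal^c} u' = u'$ for every $u' \in \mathbb R^{|\Scal^c|}$. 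Moreover, feasibility forces $x^* \in \mathbb R^N_+$, so each component of $x^*$ on its support $\Scal$ is strictly positive and therefore $\wh b := \mbox{sgn}(x^*_\Scal) = \mathbf 1 \in \mathbb R^{|\Scal|}$.

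Substituting these identities into Corollary~\ref{coro:unique_solution_L1}, condition (i) there reads ``$\begin{bmatrix} A_{\bullet\Scal} \\ 0\end{bmatrix}$ has full column rank'', which is precisely the statement that the columns of $A_{\bullet\Scal}$ are linearly independent, i.e. condition (i) of the present corollary. Condition (ii) there becomes: there exist $u \in \mathbb R^m$ and $u' \in \mathbb R^{|\Scal^c|}_{++}$ with $A^T_{\bullet\Scal} u = \mathbf 1$ and $\|A^T_{\bullet\Scal^c} u + u'\|_\infty < 1$. It then remains to show that this last statement is equivalent to the existence of $u \in \mathbb R^m$ with $A^T_{\bullet\Scal} u = \mathbf 1$ and $A^T_{\bullet\Scal^c} u < \mathbf 1$, which is condition (ii) of the corollary.

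The forward implication is immediate, since componentwise $(A^T_{\bullet\Scal^c}u)_j = (A^T_{\bullet\Scal^c}u + u')_j - u'_j < 1 - u'_j < 1$. For the converse, given such a $u$, write $w := A^T_{\bullet\Scal^c} u$; for each index $j$ the open interval $\bigl(\max\{0,\,-1-w_j\},\ 1-w_j\bigr)$ is nonempty because $-1-w_j < 1-w_j$ and $0 < 1-w_j$, and choosing $u'_j$ inside it produces a vector $u' > 0$ with $-1 < w_j + u'_j < 1$ for all $j$, i.e. $\|w + u'\|_\infty < 1$. Combining the two reductions yields the claimed equivalence of conditions.

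I do not anticipate a genuine obstacle: the argument is a bookkeeping specialization plus the elementary slack-variable step. The only point requiring a little care is the last equivalence — namely that the strictly positive auxiliary vector $u'$ appearing in Corollary~\ref{coro:unique_solution_L1} can be eliminated and that the two-sided $\ell_\infty$ bound collapses to the one-sided bound $A^T_{\bullet\Scal^c}u < \mathbf 1$; geometrically this reflects that the normal cone $\mathcal N_{\mathbb R^N_+}(x^*)$ constrains only the coordinates outside $\Scal$, and only from one side. This reproduces \cite[Theorem 2.7]{Zhao_JORSC14} with no extra hypotheses.
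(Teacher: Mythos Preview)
Your proposal is correct and follows essentially the same approach as the paper: specialize Corollary~\ref{coro:unique_solution_L1} using $\alpha=\Scal^c$, $C_{\alpha\Scal}=0$, $C_{\alpha\Scal^c}=I$, $\wh b=\mathbf 1$, and then argue the equivalence between the two-sided bound $\|A^T_{\bullet\Scal^c}u+u'\|_\infty<1$ with $u'>0$ and the one-sided bound $A^T_{\bullet\Scal^c}u<\mathbf 1$. The paper handles the converse of that last equivalence by a case split on whether $v_i\in(-1,1)$ or $v_i\le -1$, whereas your unified interval argument $(\max\{0,-1-w_j\},\,1-w_j)\ne\emptyset$ is slightly cleaner, but the content is the same.
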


\begin{proof}
 Note that for a given $x^* \in \mathbb R^N_+$, we have $\wh b=(\mbox{sgn}(x^*_\Scal))=\mathbf 1 \in \mathbb R^{|\Scal|}$, $\alpha=\Scal^c$, $C_{\alpha \Scal}=0$ and $C_{\alpha \Scal^c} = I_{\Scal^c\Scal^c}$. It thus follows from Corollary~\ref{coro:unique_solution_L1} that $x^*$ is the unique minimizer if and only if (i) $A_{\bullet\Scal}$ has full column rank, and (ii) there exist $u\in \mathbb R^m$ and $u' \in \mathbb R^{|\Scal^c|}_{++}$ such that $A^T_{\bullet \Scal} u =\wh b = \mathbf 1$ and $\| A^T_{\bullet\Scal^c} u + u' \|_\infty<1$. Hence, it suffices to show that $(A^T u)_{ \Scal^c}<\mathbf 1$ if and only if $\| A^T_{\bullet\Scal^c} u + u' \|_\infty<1$ for some $u'>0$. The ``if'' part is trivial. To show the ``only if'' part, suppose for each $i \in \Scal^c$, $v_i:=(A^T u)_i<1$. Hence, either $-1 <v_i<1$ or $v_i \le -1$. For the former case, choose $u'_i>0$ sufficiently small so that $|v_i+u'_i |<1$. For the latter case, choose $s$ satisfying $0\le \frac{-(1+v_i)}{1-v_i}< s < 1$. Then $u'_i := s \cdot (1-v_i)>0$ is such that $|v_i + u'_i|<1$. This thus completes the proof.
\end{proof}

The solution uniqueness conditions for the  LASSO-like and the BPDN-I/II-like problems under the nonnegative constraint are given below. Their proofs follow directly from Corollaries~\ref{coro:Lasso_L1}-\ref{coro:BPDN_II_L1}
%
%
%
 and  the fact that $h\ge -\mathbf 1$  if and only if there exists $u\ge 0$ such that $\| h-u \|_\infty \le 1$.

\begin{corollary} 
 Let $g(x)=\|x \|_1$,   and $f:\mathbb R^m \rightarrow \mathbb R$ be a $C^1$  strictly convex function, and $x^*$ be a feasible point of the problem (\ref{eqn:P_II02}).
 Then $x^*$ is the unique minimizer of (\ref{eqn:P_II02}) if and only if conditions (i)-(ii) of Corollary~\ref{coro:BP_L1_nonnegative} and the following condition hold: $
             A^T_{\bullet \Scal} \nabla f(A x^* -y)=  -\mathbf 1,
         $
         and
         $
         A^T_{\bullet \Scal^c} \nabla f(A x^* -y)\ge -\mathbf 1$.
%
\end{corollary}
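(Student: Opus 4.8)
The plan is to specialize Corollary~\ref{coro:Lasso_L1} to $C = I_N$ and $d = 0$ (so that $\mathcal P = \mathbb R^N_+$) and then simplify each of its conditions at a feasible point $x^* \in \mathbb R^N_+$ with support $\mathcal S$. As recorded in the proof of Corollary~\ref{coro:BP_L1_nonnegative}, for such an $x^*$ one has $\wh b = \mbox{sgn}(x^*_\Scal) = \mathbf 1$, the active index set $\alpha = \{\, i \mid (Cx^*-d)_i = 0 \,\} = \Scal^c$, and $C_{\alpha\Scal} = 0$, $C_{\alpha\Scal^c} = I_{\Scal^c\Scal^c}$.

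First I would invoke the proof of Corollary~\ref{coro:BP_L1_nonnegative} to observe that, under these substitutions, conditions (i)--(ii) of Corollary~\ref{coro:unique_solution_L1} become precisely conditions (i)--(ii) of Corollary~\ref{coro:BP_L1_nonnegative}, i.e., $A_{\bullet\Scal}$ has full column rank and there is $u \in \mathbb R^m$ with $A^T_{\bullet\Scal} u = \mathbf 1$ and $A^T_{\bullet\Scal^c} u < \mathbf 1$. Next I would simplify the extra condition of Corollary~\ref{coro:Lasso_L1}, namely the existence of $\wt u \in \mathbb R^{|\alpha|}_+$ with $A^T_{\bullet\Scal}\nabla f(Ax^*-y) - C^T_{\alpha\Scal}\wt u + \wh b = 0$ and $\|A^T_{\bullet\Scal^c}\nabla f(Ax^*-y) - C^T_{\alpha\Scal^c}\wt u\|_\infty \le 1$. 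Since $C_{\alpha\Scal} = 0$ and $\wh b = \mathbf 1$, the equality collapses to the $\wt u$-free identity $A^T_{\bullet\Scal}\nabla f(Ax^*-y) = -\mathbf 1$; and since $C_{\alpha\Scal^c} = I_{\Scal^c\Scal^c}$, writing $h := A^T_{\bullet\Scal^c}\nabla f(Ax^*-y)$, the remaining requirement is that there exist $\wt u \ge 0$ with $\|h - \wt u\|_\infty \le 1$.

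The last step is the elementary fact (quoted before the statement) that such a $\wt u \ge 0$ exists if and only if $h \ge -\mathbf 1$: for the nontrivial direction one takes $\wt u := \max(0,\, h - \mathbf 1)$ componentwise and checks that $h_i - \wt u_i \in [-1,1]$ for every $i$ (using $h_i - 1 \le \wt u_i \le h_i + 1$, valid because $h_i + 1 \ge 0$); the reverse direction is immediate from $h_i \ge \wt u_i - 1 \ge -1$. Assembling the three simplified conditions yields the corollary. I do not expect a genuine obstacle here: the only point requiring care is that $\wt u$ drops out of the equality constraint (thanks to $C_{\alpha\Scal} = 0$) but survives in the inequality, where it is exactly absorbed by this equivalence; everything else is bookkeeping with the index sets $\Scal$ and $\Scal^c$.
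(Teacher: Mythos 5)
Your proposal is correct and follows essentially the same route the paper takes: the paper omits the argument, noting only that it follows from Corollary~\ref{coro:Lasso_L1} specialized to $C=I_N$, $d=0$ (so $\alpha=\Scal^c$, $C_{\alpha\Scal}=0$, $C_{\alpha\Scal^c}=I_{\Scal^c\Scal^c}$, $\wh b=\mathbf 1$) together with the fact that $h\ge -\mathbf 1$ iff there exists $\wt u\ge 0$ with $\|h-\wt u\|_\infty\le 1$, which is exactly your reduction. Your explicit witness $\wt u=\max(0,\,h-\mathbf 1)$ for that elementary fact, and your observation that $\wt u$ drops out of the equality while being absorbed in the inequality, correctly supply the details the paper leaves unstated.
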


The next result characterizes a nonzero unique optimal solution to the BPDN-I-like problem (\ref{eqn:P_III02}). The case $f(A x^*-y) < \varepsilon$ is ignored due to the fact that $0\in \mathbb R^N_+$ and the discussions in Section~\ref{subsect:BPDN_I_recovery_comp}.

\begin{corollary} 
 Let $g(x)=\|x \|_1$,  $f:\mathbb R^m \rightarrow \mathbb R$ be a $C^1$  strictly convex function, and $x^*$ be a nonzero feasible point of the problem (\ref{eqn:P_III02}).
   Then $x^*$ is the unique minimizer of  (\ref{eqn:P_III02}) if and only if conditions (i)-(ii) of Corollary~\ref{coro:BP_L1_nonnegative} and the following condition hold: if $\mathcal K:=\{ v \in \mathbb R^N \, | \, \big(\nabla f(A x^*-y) \big)^T A v <0, \ v_{\Scal^c} \ge 0 \}$ is nonempty, then there exists a positive real number $\theta$ such that
     $
        \theta \cdot A^T_{\bullet\Scal} \nabla f(A x^*-y)= -\mathbf 1$, and $\theta \cdot A^T_{\bullet\Scal^c} \nabla f(A x^*-y) \ge -\mathbf 1$.
%
\end{corollary}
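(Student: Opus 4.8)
The plan is to derive this corollary as a specialization of Corollary~\ref{coro:BPDN_I_L1} to the nonnegative constraint, i.e.\ to the case $C=I_N$, $d=0$, followed by a simplification of the resulting conditions. First I would record the relevant data at a nonzero feasible $x^*$ under this choice: since $(Cx^*-d)_i=x^*_i$, the active set is $\alpha=\Scal^c$, where $\Scal$ is the support of $x^*$; hence $\wh b=\mbox{sgn}(x^*_\Scal)=\mathbf 1\in\mathbb R^{|\Scal|}$, $C_{\alpha\Scal}=0$, and $C_{\alpha\Scal^c}=I_{\Scal^c\Scal^c}$.

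Next I would argue that only case C.2 of Corollary~\ref{coro:BPDN_I_L1} is in force here. Because $0\in\mathbb R^N_+$ and the whole segment from a feasible $x^*$ to $0$ stays in $\mathbb R^N_+$, if $f(Ax^*-y)<\varepsilon$ then for all small $t>0$ the point $(1-t)x^*$ remains feasible while $\|(1-t)x^*\|_1=(1-t)\|x^*\|_1<\|x^*\|_1$ (using $x^*\neq 0$); so such an $x^*$ cannot even be a minimizer. This is exactly the content of the discussion in Section~\ref{subsect:BPDN_I_recovery_comp}: since $0\in\mathcal P$, the quantity $Ax-y$ is constant on the solution set $\mathcal X$ and $f(Ax-y)=\varepsilon$ on $\mathcal X$ whenever $0\notin\mathcal X$, which is the situation when $x^*\neq 0$ is the unique minimizer. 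Consequently a nonzero feasible $x^*$ is the unique minimizer of (\ref{eqn:P_III02}) if and only if $f(Ax^*-y)=\varepsilon$ and the conditions of case C.2 of Corollary~\ref{coro:BPDN_I_L1} hold; the remaining task is to rewrite those conditions under $C=I_N$.

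For conditions (i)--(ii) of Corollary~\ref{coro:unique_solution_L1}, I would reuse the calculation from the proof of Corollary~\ref{coro:BP_L1_nonnegative}: with $C_{\alpha\Scal}=0$ the full-column-rank requirement on $\begin{bmatrix}A_{\bullet\Scal}\\ C_{\alpha\Scal}\end{bmatrix}$ collapses to ``$A_{\bullet\Scal}$ has linearly independent columns'', and with $C_{\alpha\Scal^c}=I_{\Scal^c\Scal^c}$ the requirement ``$\exists\,u,\ u'\in\mathbb R^{|\Scal^c|}_{++}$ with $A^T_{\bullet\Scal}u=\wh b$ and $\|A^T_{\bullet\Scal^c}u+u'\|_\infty<1$'' is equivalent to ``$\exists\,u$ with $A^T_{\bullet\Scal}u=\mathbf 1$ and $A^T_{\bullet\Scal^c}u<\mathbf 1$''; these are precisely conditions (i)--(ii) of Corollary~\ref{coro:BP_L1_nonnegative}. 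For condition (2.iii) of Corollary~\ref{coro:BPDN_I_L1}, note that $C_{\alpha\bullet}v\ge 0$ becomes $v_{\Scal^c}\ge 0$, so $\mathcal K$ is the cone appearing in the statement. When $\mathcal K\neq\emptyset$, the equation $\theta\cdot A^T_{\bullet\Scal}\nabla f(Ax^*-y)-C^T_{\alpha\Scal}\wt u+\wh b=0$ reduces (since $C_{\alpha\Scal}=0$, $\wh b=\mathbf 1$) to $\theta\cdot A^T_{\bullet\Scal}\nabla f(Ax^*-y)=-\mathbf 1$, in which the multiplier $\wt u$ no longer appears; and the companion inequality $\|\theta\cdot A^T_{\bullet\Scal^c}\nabla f(Ax^*-y)-\wt u\|_\infty\le 1$ for some $\wt u\in\mathbb R^{|\Scal^c|}_+$ is, by the elementary equivalence $h\ge-\mathbf 1\iff\exists\,u\ge 0\ \mbox{with}\ \|h-u\|_\infty\le 1$ applied to $h=\theta\cdot A^T_{\bullet\Scal^c}\nabla f(Ax^*-y)$, the same as $\theta\cdot A^T_{\bullet\Scal^c}\nabla f(Ax^*-y)\ge-\mathbf 1$. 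Combining these gives the stated characterization.

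The step that I expect to need the most care is the reduction to case C.2, i.e.\ showing rigorously that the sub-case $f(Ax^*-y)<\varepsilon$ contributes nothing for a nonzero $x^*$ and hence may be suppressed from the statement; this relies on the convexity argument above together with the solution-set properties recalled in Section~\ref{subsect:BPDN_I_recovery_comp}. The rest is bookkeeping: tracking how $\alpha$, $\wh b$, $C_{\alpha\Scal}$ and $C_{\alpha\Scal^c}$ degenerate when $C=I_N$, and invoking the $\|h-u\|_\infty$ equivalence with the correct sign.
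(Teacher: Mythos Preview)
Your proposal is correct and matches the paper's own argument essentially line for line. The paper proves this corollary in one sentence, saying it follows from Corollary~\ref{coro:BPDN_I_L1} together with the equivalence $h\ge-\mathbf 1\iff\exists\,u\ge 0$ with $\|h-u\|_\infty\le 1$, and disposes of case C.1 exactly as you do, by invoking $0\in\mathbb R^N_+$ and the discussion in Section~\ref{subsect:BPDN_I_recovery_comp}; your write-up simply unpacks these steps with the explicit data $\alpha=\Scal^c$, $\wh b=\mathbf 1$, $C_{\alpha\Scal}=0$, $C_{\alpha\Scal^c}=I$.
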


We then consider the BPDN-II-like problem (\ref{eqn:constrained_BPDN_II_L1}) subject to the nonnegative constraint. 
%
%

\begin{corollary} 
Let $g(x)=\|x \|_1$,  $f:\mathbb R^m \rightarrow \mathbb R$ be a $C^1$  strictly convex function, and $x^*$ be a feasible point of the problem (\ref{eqn:constrained_BPDN_II_L1}).
\begin{itemize}
   \item [C.1] Suppose $g(x^*)< \eta$. Then $x^*$ is the unique minimizer of  (\ref{eqn:constrained_BPDN_II_L1}) if and only if the associated conditions given in C.1 of Proposition~\ref{prop:BPDN_II_EL1} hold;
       %
 %
   \item [C.2] Suppose $g(x^*)= \eta$. Then $x^*$ is the unique minimizer of  (\ref{eqn:constrained_BPDN_II_L1}) if and only if conditions (i)-(ii) of Corollary~\ref{coro:BP_L1_nonnegative} and the following condition hold: there exists $\mu \in \mathbb R_+$
     %
     %
     such that
         $A^T_{\bullet\Scal} \nabla f(A x^*-y) =- \mu \cdot \mathbf 1$, and
          $A^T_{\bullet\Scal^c} \nabla f(A x^*-y)  \ge -\mu \cdot \mathbf 1$.
\mycut{
 \begin{itemize}
%
     \item [(2.iii)]  There exists $\mu \in \mathbb R_+$
     %
     %
     such that
         $A^T_{\bullet\Scal} \nabla f(A x^*-y) =- \mu \cdot \mathbf 1$, and
          $A^T_{\bullet\Scal^c} \nabla f(A x^*-y)  \ge -\mu \cdot \mathbf 1$.
 \end{itemize}
 }
 \end{itemize}
\end{corollary}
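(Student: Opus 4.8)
The plan is to obtain this corollary as a direct specialization of Corollary~\ref{coro:BPDN_II_L1} to the data $C = I_N$ and $d = 0$, so that $\mathcal P = \{x \mid Cx \ge d\} = \mathbb R^N_+$. Since C.1 of the present statement merely repeats C.1 of Corollary~\ref{coro:BPDN_II_L1} verbatim (the reference to C.1 of Proposition~\ref{prop:BPDN_II_EL1} is unchanged), there is nothing to prove in that case, and I would concentrate entirely on C.2.

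First I would record, exactly as in the proof of Corollary~\ref{coro:BP_L1_nonnegative}, the structure of the relevant index sets and submatrices at a feasible point $x^* \in \mathbb R^N_+$ with support $\mathcal S$: one has $\wh b = \sgn(x^*_{\mathcal S}) = \mathbf 1 \in \mathbb R^{|\mathcal S|}$, the active index set of the constraint $Cx \ge d$ at $x^*$ is $\alpha = \mathcal S^c$, and $C_{\alpha\bullet} = I_{\mathcal S^c\bullet}$, so that $C_{\alpha\mathcal S} = 0$ and $C_{\alpha\mathcal S^c} = I_{\mathcal S^c\mathcal S^c}$. Substituting these into conditions (i)--(ii) of Corollary~\ref{coro:unique_solution_L1} reproduces conditions (i)--(ii) of Corollary~\ref{coro:BP_L1_nonnegative}, precisely as was already verified in the proof of the latter.

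Next I would substitute the same data into condition (2.iii) of C.2 of Corollary~\ref{coro:BPDN_II_L1}. With $C_{\alpha\mathcal S} = 0$, the equality $A^T_{\bullet\mathcal S}\nabla f(Ax^*-y) - C^T_{\alpha\mathcal S}\wt u + \mu\wh b = 0$ collapses to $A^T_{\bullet\mathcal S}\nabla f(Ax^*-y) = -\mu\mathbf 1$, and with $C_{\alpha\mathcal S^c} = I$ the inequality becomes: there exists $\wt u \in \mathbb R^{|\mathcal S^c|}_+$ with $\|A^T_{\bullet\mathcal S^c}\nabla f(Ax^*-y) - \wt u\|_\infty \le \mu$. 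Writing $h := A^T_{\bullet\mathcal S^c}\nabla f(Ax^*-y)$, it then remains to show that the existence of such a $\wt u \ge 0$ is equivalent to $h \ge -\mu\mathbf 1$; this is the elementary fact quoted just before the statement (namely $h \ge -\mathbf 1$ iff there is $u \ge 0$ with $\|h-u\|_\infty \le 1$), applied coordinatewise after rescaling by $\mu$, with the degenerate case $\mu = 0$ handled directly (then both conditions reduce to $h \ge 0$, taking $\wt u = h$). Combining the three pieces yields C.2.

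There is no genuine obstacle here: the argument is entirely a bookkeeping specialization of Corollary~\ref{coro:BPDN_II_L1} together with the one-line coordinatewise equivalence for the $\infty$-norm inequality. The only points requiring a moment's care are the identification $\alpha = \mathcal S^c$ (so that the column partition of $C_{\alpha\bullet}$ along $\mathcal S$ and $\mathcal S^c$ is exactly $[\,0 \ \ I\,]$) and the $\mu = 0$ boundary case of that equivalence, both of which are routine.
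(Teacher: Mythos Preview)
Your proposal is correct and follows exactly the approach the paper indicates: specialize Corollary~\ref{coro:BPDN_II_L1} to $C=I_N$, $d=0$ (so $\alpha=\Scal^c$, $C_{\alpha\Scal}=0$, $C_{\alpha\Scal^c}=I$, $\wh b=\mathbf 1$), identify conditions (i)--(ii) of Corollary~\ref{coro:unique_solution_L1} with those of Corollary~\ref{coro:BP_L1_nonnegative} as already done there, and reduce (2.iii) via the quoted equivalence $h\ge -\mathbf 1 \iff \exists\,u\ge 0$ with $\|h-u\|_\infty\le 1$ (rescaled by $\mu$, with the $\mu=0$ case handled directly). This is precisely the paper's omitted proof, written out in full.
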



 We next consider the linear inequality constraint with $C=D_1$ and $d=0$, where $D_1$ is the first-order difference matrix defined in (\ref{eqn:D1_matrix}). In other words, the variable $x$ is subject to the monotone increasing constraint which appears in such applications as order statistics.
 For the purpose of illustration, we consider the BP-like problem (\ref{eqn:P_I02})  with $g(\cdot)=\|\cdot \|_1$ for a feasible $x^*$. Since the elements of $x^*$ are monotonically increasing, we can write it as
 \[
    x^* \, = \, \Big( \, \underbrace{ x^*_1, \ldots, x^*_{k^-}}_{<0}, \, 0, \ldots, 0, \, \underbrace{x^*_{k^+}, \ldots, x^*_N}_{>0} \, \Big)^T \, \in \mathbb R^N,
 \]
where $x^*_{k^-}$ is the last negative element and $x^*_{k^+}$ is the first positive element, both from the left. Define the index sets $\Scal_- :=\{ 1, \ldots, k^-\}$, and $\Scal_+ :=\{k^+, \ldots, N\}$. Then the support of $x^*$ is  $\Scal =\Scal_-\cup \Scal_+$, and $\wh b_{\Scal_-}= - \mathbf 1$ and $\wh b_{\Scal_+}=  \mathbf 1$. Further, the index set of active constraints is $\alpha=\alpha_- \cup \alpha_0 \cup \alpha_+$, where $\alpha_-$ and $\alpha_+$ are the index sets of active constraints associated with $(x^*_1, \ldots, x^*_{k^-}, 0)^T\in \mathbb R^{k^-+1}$ and $(0, x^*_{k^+}, \ldots, x^*_N)^T\in \mathbb R^{N-k^++2}$ respectively, and $\alpha_0=\{ k^-+1, k^-+2, \ldots, k^+-2\}$. Note that $\Scal^c=\{ k^-+1, k^-+2, \ldots, k^+-1\}$, $\alpha_-\subset \Scal_-$, $\alpha_+\subset \Scal_+$, and $(D_1)_{\alpha_0 \Scal_-}=0$, $(D_1)_{\alpha_+ \Scal_-}=0$, $(D_1)_{\alpha_0 \Scal_+}=0$, $(D_1)_{\alpha_- \Scal_+}=0$. Furthermore, defining $\alpha_- + 1 :=\{ i + 1 \, | \, i \in \alpha_-\}$ and $\alpha_+ + 1 :=\{ j + 1 \, | \, j \in \alpha_+\}$, we  let $\ol \alpha_-:=\alpha_-\cup(\alpha_-+1)\subset \Scal_-$, and
$\ol \alpha_+:=\alpha_+\cup(\alpha_++1)\subset \Scal_+$.
Since the null spaces of $(D_1)_{\alpha_-\ol \alpha_-}$ and $(D_1)_{\alpha_+\ol\alpha_+}$ are spanned by $\mathbf 1$ respectively, and $(D_1)_{\alpha_-(\Scal_-\setminus\ol\alpha_-)}=0$, $(D_1)_{\alpha_+(\Scal_+\setminus\ol\alpha_+)}=0$, we have
\[
 \begin{bmatrix} A_{\bullet \Scal} \\ (D_1)_{\alpha\Scal}  \end{bmatrix} = \begin{bmatrix} A_{\bullet \Scal_-} & A_{\bullet \Scal_+} \\ (D_1)_{\alpha_- \Scal_-} & 0 \\ 0 & 0 \\ 0 & (D_1)_{\alpha_+ \Scal_+}  \end{bmatrix} = \begin{bmatrix} A_{\bullet \ol\alpha_-} & A_{\bullet (\Scal_-\setminus \ol\alpha_-)} & A_{\bullet \ol\alpha_+} & A_{\bullet (\Scal_+\setminus \ol\alpha_+)} \\ (D_1)_{\alpha_- \ol\alpha} & (D_1)_{\alpha_-(\Scal_-\setminus \ol\alpha_-)} & 0 & 0 \\ 0 & 0 & 0 & 0 \\ 0 &0 & (D_1)_{\alpha_+\ol\alpha_+} & (D_1)_{\alpha_+(\Scal_+\setminus\ol\alpha_+)}   \end{bmatrix}.
\]
Therefore,  condition (i) of Corollary~\ref{coro:unique_solution_L1} holds, i.e., $\begin{bmatrix} A_{\bullet \Scal} \\ (D_1)_{\alpha\Scal}  \end{bmatrix}$ has full column rank, if and only if $\begin{bmatrix} A_{\bullet (\Scal_-\setminus \ol\alpha_-)} & A_{\bullet (\Scal_+\setminus \ol\alpha_+)} & (A_{\bullet \ol\alpha_-}\mathbf 1 + A_{\bullet \ol\alpha_+} \mathbf 1) \end{bmatrix}$ has full column rank. Moreover, in light of the above development, we see that condition (ii) of Corollary~\ref{coro:unique_solution_L1} is equivalent to the existence of $u\in \mathbb R^m$ and $(u'_-, u'_0, u'_+)>0$ such that $A^T_{\bullet \Scal_-} u + [(D_1)_{\alpha_-\Scal_-}]^T u'_-= -\mathbf 1$, $A^T_{\bullet \Scal_+} u + [(D_1)_{\alpha_+\Scal_+}]^T u'_+= \mathbf 1$, and $\| A^T_{\bullet \Scal^c} u + [(D_1)_{\alpha_0\Scal^c}]^T u'_0\|_\infty<1$, where the last inequality follows from the fact that $(D_1)_{\alpha_-\Scal^c}=0$ and $(D_1)_{\alpha_+\Scal^c}=0$.

Lastly, we consider the Dantzig selector \cite{CandesTao_AoS07} (cf. Section~\ref{subsect:PA_loss_unique}). As shown in Section~\ref{subsect:PA_loss_unique}, this problem can be formulated as the basis pursuit subject to a polyhedral constraint, i.e., $\min_{x \in \mathbb R^N} \|x \|_1$ subject to $-\varepsilon \cdot \mathbf 1 \le A^T A x - A^T y \le \varepsilon \cdot \mathbf 1$, where $\varepsilon>0$ is given. For a feasible vector $x^*$, let $\mathcal S$ be its support, and $\alpha_+$ and $\alpha_-$ be the active index sets of the constraints $A^T A x \ge A^T y - \varepsilon \cdot \mathbf 1$ and $-A^T A x \ge -(A^T y + \varepsilon \cdot \mathbf 1)$ at $x^*$, respectively.
%
%
Clearly, $\alpha_+\cap \alpha_-=\emptyset$. It thus follows from Corollary~\ref{coro:unique_solution_L1} that $x^*$ is the unique optimizer of the Dantzig selector if and only if (i) $\begin{bmatrix} A^T_{\bullet\alpha_+} \\ -A^T_{\bullet\alpha_-} \end{bmatrix} A_{\bullet \Scal} $ has full column rank, and (ii) there exists $u' \in \mathbb R^{|\alpha_+|+|\alpha_-|}_{++}$ such that $A^T_{\bullet \Scal} \begin{bmatrix} A_{\bullet\alpha_+} & -A_{\bullet\alpha_-} \end{bmatrix} u' = \mbox{sgn}(x^*_\Scal)$ and $\| A^T_{\bullet \Scal^c} \begin{bmatrix} A_{\bullet\alpha_+} & -A_{\bullet\alpha_-} \end{bmatrix} u' \|_\infty <1$. Note that condition (i) holds if and only if $A_{\bullet \Scal}$ has full column rank and $N( \begin{bmatrix} A^T_{\bullet\alpha_+} \\ A^T_{\bullet\alpha_-} \end{bmatrix}) \cap R( A_{\bullet \Scal})=\{ 0 \}$.

\mycut{
 We shows as follows that this problem can be transformed to a related problem subject to the similar nonnegative constraint. In fact, it is observed that $D_1$ has full column rank and the matrix $D=\begin{bmatrix} e^T_1 \\ D_1 \end{bmatrix} \in \mathbb R^{N\times N}$ is invertible, where $e_1=(1, 0, \ldots, 0)^T \in \mathbb R^N$. Furthermore,  we define
\[
   E\,:= \, D^{-1} \, = \,   \begin{bmatrix} 1 &&& \\ 1&1& & \\ \vdots&\vdots&\ddots& \\ 1&1&\hdots&1\\
\end{bmatrix} \in \mathbb R^{N\times N},
\]
which corresponds to the first-order discrete integrator. Therefore, the original BP (\ref{eqn:P_I02}) with $g(x)=\|x \|_1$ subject to the the monotone increasing constraint is equivalent to: $\min_{z \in \mathbb R^N} \| E z \|_1$ subject to $\wt A z =y$ and $\wt C z \ge 0$,  where $\wt A := A E$, and $\wt C := I_{\mathcal L\bullet}$ with $\mathcal L:=\{2, \ldots, N\}$. }

%
\section{Numerical Verification of the Solution Uniqueness Conditions} \label{sect:numerical_verification}

In this section, we discuss numerical verification of the solution uniqueness conditions developed in the previous sections. It is observed that each set of solution uniqueness criteria involving a convex PA function and a $C^1$ strictly convex loss function established in Sections~\ref{sect:general_results}-\ref{sect:Applications} consists of the following conditions: (a) the full column rank condition for a matrix; (b) the consistency of a linear inequality system with at least one strict inequality; and/or (c) the consistency of another linear inequality system with non-strict inequalities. The first two conditions characterize solution uniqueness, while the last condition pertains to solution optimality. Numerically, the first condition can be determined via standard linear algebraic tools, and the last condition can be checked via the feasibility test of a suitable linear program. To effectively verify the conditions involving strict inequalities,
%
%
we  show in the following lemma that the verification of such conditions can be formulated as a linear program. 

\begin{lemma} \label{lem:LP_strict_inequality}
  Let $\wh z \in \mathbb R^n$, $F \in\mathbb R^{n\times m}$, $G \in\mathbb R^{n\times r}$, and $H \in \mathbb R^{n\times s}$ be given. Then the linear inequality system $(I): \wh z + F w +G w' + H w'' = 0, \ w \in \mathbb R^m, \ w' \in \mathbb R^r_+, \ w'' \in \mathbb R^s_{++}$  has a solution if and only if the following linear program is solvable and attains a positive optimal value:
   \begin{equation} \label{eqn:LP_numerical}
      \max_{(w, w', w'', \, \varepsilon) \in \mathbb R^m \times \mathbb R^r \times \mathbb R^s \times \mathbb R} \ \varepsilon   \quad \ \text{subject to} \quad  \wh z + Fw + G w' + H w'' =0, \ \ w' \ge 0, \ \   w'' \ge \varepsilon \cdot \mathbf 1, \ \  \varepsilon\le 1.
 \end{equation}
\end{lemma}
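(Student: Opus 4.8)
The plan is to establish the two directions of the equivalence directly, treating $\varepsilon$ as an auxiliary slack variable that measures how strictly the constraint $w'' > 0$ can be satisfied.

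First I would prove the "only if" direction. Suppose system $(I)$ has a solution $(\bar w, \bar w', \bar w'')$ with $\bar w'' \in \mathbb R^s_{++}$. Set $\varepsilon_0 := \min_{i} \bar w''_i > 0$. If $\varepsilon_0 \le 1$, then $(\bar w, \bar w', \bar w'', \varepsilon_0)$ is feasible for the linear program \eqref{eqn:LP_numerical} with objective value $\varepsilon_0 > 0$; if $\varepsilon_0 > 1$, I would instead scale: since the only constraint coupling $\varepsilon$ to the rest is $w'' \ge \varepsilon \cdot \mathbf 1$ and $\varepsilon \le 1$, the tuple $(\bar w, \bar w', \bar w'', 1)$ is feasible with objective value $1 > 0$. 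Either way the program is feasible with a strictly positive objective value. Boundedness of the optimal value is immediate from the constraint $\varepsilon \le 1$, so by the fundamental theorem of linear programming the program is solvable and its optimal value is positive.

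Next I would prove the "if" direction. Suppose \eqref{eqn:LP_numerical} is solvable with optimal value $\varepsilon^* > 0$, attained at some $(w^*, w'^*, w''^*, \varepsilon^*)$. Then $w'^* \ge 0$ and $w''^* \ge \varepsilon^* \cdot \mathbf 1 > 0$, so $w''^* \in \mathbb R^s_{++}$; together with $\widehat z + F w^* + G w'^* + H w''^* = 0$, this exhibits a solution of $(I)$. This direction is essentially a tautology once the feasible point is in hand.

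I do not anticipate a genuine obstacle here — the statement is a routine reformulation lemma. The only point requiring mild care is the "only if" direction when the given strictly positive $\bar w''$ has entries exceeding $1$: one must observe that the constraint $\varepsilon \le 1$ in \eqref{eqn:LP_numerical} does not preclude feasibility, because $\varepsilon$ only needs to satisfy $w'' \ge \varepsilon \cdot \mathbf 1$ from below, and one may simply take $\varepsilon = 1$ (or any positive value up to $\min_i \bar w''_i$) without altering $w, w', w''$. It is also worth remarking in the write-up that the linear program \eqref{eqn:LP_numerical} is always feasible whenever the linear equality $\widehat z + Fw + Gw' + Hw'' = 0$ admits any solution with $w' \ge 0$ (take $\varepsilon$ very negative), so the content of the lemma is entirely in whether the optimal value is positive versus nonpositive; this is the practical point that makes the criterion checkable.
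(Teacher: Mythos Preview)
Your proof is correct and follows essentially the same approach as the paper: both directions are handled directly, using the constraint $\varepsilon \le 1$ to guarantee boundedness and hence solvability, and reading off a strictly positive $w''$ from any feasible point with $\varepsilon > 0$. The paper compresses your case split in the ``only if'' direction by simply choosing $\theta \in (0,1]$ with $\tilde w'' \ge \theta \cdot \mathbf 1$, but the content is identical.
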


\begin{proof}
 To show the ``if'' part, let $(w_*, w'_*, w''_*, \varepsilon_*)\in \mathbb R^m \times \mathbb R^r_+ \times \mathbb R^s\times \mathbb R$ be an optimal solution to the linear program (\ref{eqn:LP_numerical})  with $\varepsilon_*>0$. Hence, we have $w''_* \ge  \varepsilon_* \cdot \mathbf 1>0$. This shows that the system (I) has a solution. For the ``only if'' part, suppose there exists a triple $(\wt w, \wt w', \wt w'')\in \mathbb R^m \times \mathbb R^r_+\times \mathbb R^s_{++}$ such that $\wh z + F \wt w + G \wt w' + H \wt w''=0$. Since $\wt w''>0$, there exists a real number $\theta \in (0, 1]$ such that $\wt w'' \ge \theta \cdot \mathbf 1$. Therefore, the linear program (\ref{eqn:LP_numerical}) is feasible. Furthermore, since its objective function is bounded above by 1 (and bounded below by $\theta$) on the feasible set, the linear program (\ref{eqn:LP_numerical}) attains an optimal solution and its optimal value $\varepsilon_*\ge \theta>0$. This yields the desired result.
\end{proof}

%
%
%

In what follows, we apply Lemma~\ref{lem:LP_strict_inequality} to check various uniqueness conditions involving strict inequalities. Particularly, we show how to formulate these conditions in the requested form in Lemma~\ref{lem:LP_strict_inequality}.

\gap


 1) Condition (ii) of Theorem~\ref{thm:unique_optimal_P_I02}, i.e.,  there exist $z \in \mathbb R^m$, $z' \in \mathbb R^{|\alpha|}_{++}$, and $z'' \in \mathbb R^{|\Ical|}_{++}$ such that$A^T z - C^T_{\alpha\bullet} z' + W^T z'' \, = \, 0$. Letting $\wh z=0$, $F=A^T$, $G=0$, and $H=\begin{bmatrix} - C^T_{\alpha\bullet} & W^T \end{bmatrix}$, Lemma~\ref{lem:LP_strict_inequality} can be applied.

\gap


2) Conditions for (C.1) of  Theorem~\ref{thm:unique_optimal_P_III02}, which is equivalent to the existence of $z\in \mathbb R^{|\alpha|}_{++}$ and $z' \in \mathbb R^{|\Ical|}_{++}$ such that $C^T_{\alpha\bullet} z = W^T z'$. Letting $\wh z=0$, $F=0$, $G=0$, and $H=\begin{bmatrix} - C^T_{\alpha\bullet} & W^T \end{bmatrix}$, we use Lemma~\ref{lem:LP_strict_inequality}.

\gap


3) Condition (b) of Proposition~\ref{prop:unique_optimal_EL1}, i.e., there exist $u \in \mathbb R^m$,  $u' \in \mathbb R^{|\alpha|}_{++}$, and $u'' \in \mathbb R^{|\Scal^c|}$ with $\| u'' \|_\infty<1$  such that $A^T u +  C^T_{\alpha \bullet} u' - E^T_{\Scal^c\bullet} u'' = b$.
%
%
This condition is equivalent to the consistency of the following linear inequality system in $(u, u', u'', v, w)$:
\[
     A^T u +  C^T_{\alpha \bullet} u' - E^T_{\Scal^c\bullet} u'' =b, \quad  u''+ v=\mathbf 1, \quad u''-w =-\mathbf 1, \quad (u', v, w)>0.
\]
Suitable $\wh z, F, G$, and $H$ can be easily found from the above system in order to make use of Lemma~\ref{lem:LP_strict_inequality}.

\gap

4) Condition (ii) of Corollary~\ref{coro:unique_solution_L1}, i.e., there exist $u \in \mathbb R^m$ and  $u' \in \mathbb R^{|\alpha|}_{++}$ such that $A^T_{\bullet \Scal} u + C^T_{\alpha \Scal} u' \, = \, \wh b$ and $\| A^T_{\bullet \Scal^c} u + C^T_{\alpha \Scal^c} u' \|_\infty  <  1$. It is equivalent to the consistency of the linear inequality system in $(u, u', v, w)$: $A^T_{\bullet \Scal} u + C^T_{\alpha \Scal} u' \, = \, \wh b$, $ A^T_{\bullet \Scal^c} u + C^T_{\alpha \Scal^c} u'+v=\mathbf 1$, $ A^T_{\bullet \Scal^c} u + C^T_{\alpha \Scal^c} u'-w=-\mathbf 1$, and $(u', v, w)>0$.
%

\gap

5) Condition (ii) of  Corollary~\ref{coro:BP_L1_nonnegative}, i.e., there exists $u \in \mathbb R^m$ such that $(A^T u)_\Scal= \mathbf 1$ and $(A^T u)_{ \Scal^c}<\mathbf 1$. This condition is equivalent to the consistency of the linear inequality system in $(u, v)$:
$A^T_{\bullet \Scal} u =\mathbf 1$, $A^T_{\bullet \Scal^c} u + v =\mathbf 1$, and $v>0$.
This paves the way to exploit Lemma~\ref{lem:LP_strict_inequality}.

%
%

\begin{remark} \rm
%
%
 We briefly discuss the complexity of the linear program based verification for $\ell_1$ minimization. To facilitate the discussion, we focus on the case without linear inequality constraints, i.e., $C=0$ and $d=0$. In the most general scenario given in case 1) above, the complexity of the associated linear program depends on the size of $\mathcal I$ defined in (\ref{eqn:alpha_Jcal}), which corresponds to the active index set of the max-formulation of a convex PA function $g$. Indeed, the number of variables for case 1) is $m+|\mathcal I|$.  Note that in $\ell_1$ minimization problems such as basis pursuit, we usually have $m\ll N$ and $|\mathcal S| \ll N$ for a sparse vector $x^*$, where $\mathcal S$ is the support of $x^*$. Hence,  $|\Scal^c|\approx N$ and $|\mathcal I|=2^{|\Scal^c|} \approx 2^N$, leading to a high computational cost. Instead, using the specialized conditions given in cases 3)-5) and Lemma~\ref{lem:LP_strict_inequality}, we obtain linear programs whose numbers of variables are small multiples of $N$. This yields the complexity of $O(N^3/\log N)$ based on state-of-art linear programming techniques. In comparison with the verification schemes developed in \cite{ZhangYC_JOTA15, Zhang_Yan_Yun_ACM16} for basis pursuit without polyhedral constraints, we provide a simple, systematic, and yet effective verification scheme applicable to broader solution uniqueness conditions.
%
%
\end{remark}

%
\section{Conclusions} \label{sect:conclusion}

This paper studies the solution uniqueness of a class of convex optimization problems involving convex PA functions and subject to general polyhedral constraints. By exploiting the max-formulation of convex PA functions and convex analysis techniques, we develop simpler and unifying approaches to derive dual variable based solution uniqueness conditions for an individual vector. These results are applied to a variety of $\ell_1$ minimization problems under possible polyhedral constraints.
%
%
An effective verification scheme is also proposed for the obtained uniqueness conditions.
%
%


%

\end{document}